\documentclass[a4paper,10pt]{amsart}

\usepackage[english]{babel}
\usepackage[utf8]{inputenc}
\usepackage[T1]{fontenc}
\usepackage{lmodern}

\usepackage{amssymb}
\usepackage{amsmath}
\usepackage{amsthm}
\usepackage{bbm}

\usepackage[shortlabels]{enumitem}
\usepackage{tikz}
\usepackage{marginnote}
\usepackage{mathtools}
\usepackage{orcidlink}

\usepackage{hyperref}

\newcommand{\bbE}{\mathbb{E}}

\newcommand{\bbN}{\mathbb{N}}

\newcommand{\bbR}{\mathbb{R}}

\newcommand{\calF}{\mathcal{F}}

\newcommand{\calL}{\mathcal{L}}

\newcommand{\N}{\bbN}
\newcommand{\R}{\bbR}

\DeclarePairedDelimiter{\norm}{\lVert}{\rVert}
\DeclarePairedDelimiter{\abs}{\lvert}{\rvert}
\DeclarePairedDelimiter{\dual}{\langle}{\rangle}
\DeclarePairedDelimiter{\set}{\lbrace}{\rbrace}

\DeclareMathOperator{\one}{\mathbbm{1}}

\DeclareMathOperator{\fix}{fix}

\newcommand{\ud}{\mathrm{d}}

\newcommand{\oto}{\xrightarrow{\,\mathrm{o}\,}}
\newcommand{\uoto}{\xrightarrow{\,\mathrm{uo}\,}}

\theoremstyle{definition}
\newtheorem{definition}{Definition}[section]
\newtheorem{remark}[definition]{Remark}

\newtheorem{general_assumption}[definition]{General Assumption}

\theoremstyle{plain}
\newtheorem{proposition}[definition]{Proposition}
\newtheorem{lemma}[definition]{Lemma}
\newtheorem{theorem}[definition]{Theorem}
\newtheorem{corollary}[definition]{Corollary}

\numberwithin{equation}{section}

\begin{document}

\title[Banach's principle in vector lattices]{Banach's principle in vector lattices}

\author{Alexander Dobrick \orcidlink{0000-0002-3308-3581}}
\address[A.~Dobrick]{Alexander Dobrick, Christian-Albrechts-Universität~zu~Kiel (Alumni), Arbeitsbereich~Analysis, 24118 Kiel, Germany}
\email{alexander.dobrick.math@web.de}

\date{\today}
\subjclass[2020]{Primary 46B42; Secondary 46A40, 47B65, 47A35, 37A30}

\begin{abstract}
	We develop an abstract Banach principle in vector lattices and apply it to obtain lattice-theoretic versions of the individual and maximal ergodic theorems, without recourse to any measure representation. Considering a sequence of bounded operators with values in a Dedekind $\sigma$-complete vector lattice endowed with a locally solid topology satisfying the $\sigma$-Lebesgue property, we prove that the set of points at which the sequence is order convergent is a closed subspace and coincides with the whole space whenever convergence holds on a dense subset. Investigating positive, power-bounded, mean ergodic operators on order continuous Banach lattices, we construct a topology on the universal completion induced by a strictly positive order continuous functional and prove that the Cesàro means converge in order in the universal completion and, in particular, uo-converge in the original lattice. Moreover, we introduce the notion of a superinvariant pair and derive a lattice-theoretic Hopf inequality together with a weak type estimate via band projections, which yields an abstract maximal ergodic theorem. A spectral-theoretic version of the theorem follows from classical Perron--Frobenius theory. Finally, we specialise the abstract framework to the model space $L^0(\Omega)$ and revisit the classical Banach principle, the Hopf--Dunford--Schwartz theorem and Doob's martingale convergence theorem.
\end{abstract}

\maketitle

\section{Introduction} \label{section:introduction}
The pointwise convergence of ergodic averages is a central theme of analysis, with roots reaching from Boltzmann's statistical mechanics to Birkhoff's individual ergodic theorem \cite{Birkhoff1931}. While Birkhoff's theorem is formulated for measure-preserving transformations, the functional-analytic viewpoint pioneered by Yosida and Kakutani \cite{Yosida1939} reveals that its essential mechanism is a Banach-space principle: Banach \cite{Banach1926} observed already in 1926 that the set of points at which a sequence of operators converges pointwise is closed, provided the associated maximal operator is continuous at the origin. This abstract principle was elaborated by Hopf \cite{Hopf1954} and by Dunford and Schwartz \cite{Dunford1956}, whose maximal inequality for positive contractions on $L^1$ has since become the standard route to the ergodic theorem. A remarkably short proof of Hopf's inequality, which the proof of our own maximal ergodic theorem follows morally, was later given by Garsia \cite{Garsia1965}. This proof isolates the maximal inequality as the sole analytic input. Modern and comprehensive accounts of the interplay between Banach's principle and the maximal inequality are given in
\cite[Chapter~11]{Eisner2015} (see also \cite{Eisner2025}). Maximal inequalities as the sole analytic input needed for convergence theorems not confined to ergodic averages are quite common throughout the literature: Doob's martingale convergence theorem \cite{Doob1953} rests on the same Banach-space mechanism (see Section~\ref{section:applications-and-relations-to-the-classical-theory}), and so do many other theorems from classical Fourier and harmonic analysis, e.g., the Lebesgue differentiation theorem (see \cite{Hardy1930}) or almost everywhere convergence theorems for approximate identities, Walsh--Fourier series (see \cite{Billard1967}), spherical maximal averages (see \cite{Bourgain1986, Stein1976}), Bochner--Riesz means (see \cite{Carbery1988}), Calderón--Zygmund singular integral operators (see \cite{Stein1970a}).

The classical theory, however, is tied to a measure-theoretic representation: maximal functions are defined via level sets, and Banach's closed-set argument is carried out inside a concrete space of measurable functions. From the perspective of vector lattices this dependence is unsatisfactory, and the wish to remove it is not new: as early as 1948, Nakano attempted an ergodic theorem for semi-ordered linear spaces \cite{Nakano1948}, introducing in the process a notion of unbounded convergence that was taken up by DeMarr \cite{DeMarr1964} and put on a systematic footing by Wickstead \cite{Wickstead1977} under the name of \emph{unbounded order convergence}, or \emph{uo-convergence} for short. Over the past decade uo-convergence has developed into a mature theory in its own right, with contributions by various authors (see \cite{Azouzi2019, DengOBrienTroitsky2017, Gao2014, GaoLeungXanthos2018, Gao2017, GaoXanthos2014, Kandic2017, Kandic2018, Li2018, Taylor2018, Taylor2019, VanDerWalt2018}). By now, it seems to be well understood, and central to the present article, that uo-convergence is exactly the representation-free substitute for almost everywhere convergence that Nakano was reaching for: on the model space $L^0(\Omega)$, uo-convergence of a net recovers almost everywhere convergence verbatim, with no reference to a dominating element or to order-boundedness.

A second lattice-theoretic ingredient is furnished by the theory of \emph{locally solid topologies} on vector lattices, developed systematically by Aliprantis and Burkinshaw \cite{Aliprantis2003} as well as many others. Of particular importance to us is the \emph{universal completion} $E^u$ of an Archimedean vector lattice $E$ (see \cite[Chapter~7]{Aliprantis2003}), which is Dedekind and laterally complete and provides a natural ambient space in which order-boundedness questions can be settled even when $E$ itself is not complete enough; in particular, the supremum of a family of Cesàro means may fail to exist in $E$ but always exists in $E^u$. The universal completion also furnishes the concrete representation underlying our approach: in \cite{Cerreia2022} the authors show that a universally complete vector lattice with a weak order unit carrying a strictly positive order continuous functional is lattice isomorphic to $L^0(\Omega)$ for some probability space $\Omega$. Such a functional induces a gauge metric, or actually even a gauge seminorm, on $E^u$ that metrises order convergence and plays the role that the measure itself plays in the classical theory. A noncommutative precursor to this circle of ideas is due to Goldstein and Litvinov \cite{Goldstein2000}, who proved a Banach principle in the space of $\tau$-measurable operators affiliated with a semifinite von Neumann algebra, endowed with the measure topology. It can be seen that the structural ingredients of their proof align closely with the lattice-theoretic axiomatics adopted below.

A structurally related attempt at removing measure theory from ergodic averaging is due to Kuo, Labuschagne and Watson \cite{Kuo2007}, who, building on the Riesz-space martingale calculus of \cite{Kuo2004}, obtained order-theoretic analogues of the Birkhoff, Hopf and Wiener theorems directly on Riesz spaces with a weak order unit by means of band projections. Their framework, however, presupposes a conditional expectation operator which acts as the lattice-theoretic counterpart of a sub-$\sigma$-algebra. The ensuing line of work has remained tied to this conditional setting, extending it to Poincar\'e recurrence, the Kac formula and Kakutani--Rokhlin decompositions (see \cite{Azouzi2023, Azouzi2024, Azouzi2025}). In contrast, our approach does not make use of conditional expectation operators.

The present article unifies these two strands. We replace the measure-theoretic maximal inequality by an order-theoretic boundedness condition in the universal completion, we replace level sets by band projections, and we replace the integral by the gauge metric. The resulting Banach principle, individual ergodic theorem, and maximal ergodic theorem are order-theoretic theorems in the strictest sense as they do not rely on any representation theorems. Nonetheless, classical results are recovered at the very end, as an application of the abstract theory to the model space $L^0(\Omega)$ in Section~\ref{section:applications-and-relations-to-the-classical-theory}.

As a first illustration of the abstract framework, we state the Banach principle in its lattice-theoretic form. Under a standing general assumption on a sequence of operators $(T_n)_{n \in \N}$ on a suitable vector lattice $E$, the convergence set
\begin{align*}
    C \coloneqq \set{x \in E : (T_n x)_{n \in \N} \text{ is order convergent in } E}
\end{align*}
is a closed subspace, and $C = E$ whenever the convergence can be established on a dense subset (see Theorem~\ref{theorem:banach-principle}). The second, and in our view the strongest, result of the article is the abstract individual ergodic theorem. Let $T$ be a positive, power-bounded, mean ergodic operator on an order continuous Banach lattice $E$ with a quasi-interior super fixed point $e \in E$. Then the Cesàro means $A_n x$ uo-converge to the mean ergodic projection $Px$ in $E$
for each $x \in E$ (see Theorem~\ref{theorem:individual-ergodic}).

\subsection*{Contributions of this article}
The present article establishes three main results, all formulated purely in the language of vector
lattices and without recourse to measure-theoretic arguments or representation theorems.

First, we prove an abstract Banach principle in vector lattices (see
Theorem~\ref{theorem:banach-principle}). Under a standing general assumption on a sequence of linear
operators with values in a Dedekind $\sigma$-complete vector lattice endowed with a locally solid
topology, we show that the set of points at which the sequence is order convergent is a closed subspace
and coincides with the whole space as soon as convergence holds on a dense subset. The proof rests on a
lattice-theoretic continuity result for the associated maximal operator, which we obtain via Baire's
category theorem.

Second, we prove the abstract individual ergodic theorem announced above (see
Theorem~\ref{theorem:individual-ergodic}). For a positive, power-bounded, mean ergodic operator on an
order continuous Banach lattice with a weak unit (equivalently, a quasi-interior point), we show that the
Cesàro means converge in order in the universal completion and, in particular, uo-converge in the original
lattice. The argument combines the abstract Banach principle with the gauge metric on the universal
completion induced by a strictly positive order continuous functional.

Third, we establish an abstract maximal ergodic theorem (see Theorem~\ref{theorem:maximal-ergodic}).
Introducing the notion of a superinvariant pair, we derive a lattice-theoretic version of Hopf's
inequality -- our proof of which is modelled on Garsia's argument \cite{Garsia1965} -- and pass to a
weak-type estimate within the universal completion. This yields the order boundedness of the Cesàro means
required by the individual ergodic theorem and, in particular, a spectral-theoretic version via classical
Perron--Frobenius theory.

\subsection*{Organization of the article}
The paper is organised in four sections. In Section~\ref{section:an-abstract-banach-principle} we develop the abstract Banach principle in vector lattices, proving that the convergence set of an operator sequence is a closed subspace and coincides with the whole space whenever convergence holds on a dense subset. Section~\ref{section:an-abstract-individual-ergodic-theorem} is devoted to the abstract individual ergodic theorem: after constructing a gauge metric on the universal completion, we show that the Cesàro means of a positive, power-bounded, mean ergodic operator converge in order. In Section~\ref{section:an-abstract-maximal-ergodic-theorem} we establish the maximal hypothesis by lattice-theoretic means, deriving an abstract Hopf inequality and a weak type estimate, and deduce an individual ergodic theorem under a superinvariant pair hypothesis. Finally, in Section~\ref{section:applications-and-relations-to-the-classical-theory} we specialise the abstract framework to the model space $L^0(\Omega)$ and recover the classical Banach principle, the Hopf--Dunford--Schwartz theorem, and Doob's martingale convergence theorem.

\subsection*{Basic notions and notation}

We collect some of the lattice-theoretic terminology used throughout this article. For a comprehensive treatment of vector and Banach lattices we refer to the classical monographs \cite{Aliprantis1999, Aliprantis2003, Aliprantis2006, Batkai2017, Luxemburg1971, MeyerNieberg1991, Schaefer1974, Vulikh1967, Zaanen1983, Zaanen2012}.

Let $E$ be a vector lattice. A vector lattice $E$ is called \emph{Archimedean} if $n x \le y$ for all $n \in \N$ implies $x \le 0$, and \emph{Dedekind complete} if every non-empty subset that is bounded above has a supremum. Two elements $x, y \in E$ are called \emph{disjoint}, written $x \perp y$, if $\inf \set{\abs{x}, \abs{y}} = 0$. A set $A \subseteq E$ is called \emph{order bounded} if it is contained in an interval of the form $[a, b] \coloneqq \set{x \in E : a \le x \le b}$ for some $a, b \in E$. An element $u \in E_+$ is called a \emph{weak unit} if $x \wedge u = 0$ implies $x = 0$ for each $x \in E_+$.

A subset $A \subseteq E$ is called \emph{solid} if $\abs{y} \le \abs{x}$ and $x \in A$ imply $y \in A$. A linear subspace $I \subseteq E$ that is simultaneously solid is called an \emph{ideal}; equivalently, $I$ is a subspace such that $x \in I$ and $\abs{y} \le \abs{x}$ imply $y \in I$. An ideal is called a \emph{band} if it is closed under suprema of arbitrary subsets. For $e \in E_+$, the \emph{principal ideal} generated by $e$ is $E_e \coloneqq \set{x \in E : \abs{x} \le \lambda e \text{ for some } \lambda > 0}$. For $A \subseteq E_+$, the \emph{band generated by $A$} is the smallest band containing $A$, denoted by $A^{\perp\perp}$. If $E$ is Dedekind complete, each band $B \subseteq E$ is a projection band, i.e., $E = B \oplus B^\perp$, and the associated \emph{band projection} $P_B \colon E \to E$ is the positive contractive projection onto $B$ with kernel $B^\perp$. In this case, one has $0 \leq P_B \leq I$, where $I$ denotes the identity on $E$. A sublattice $F \subseteq E$ is called \emph{order dense} if for each $0 < x \in E$ there exists $y \in F$ with $0 < y \le x$. A \emph{lattice isomorphism} is a bijective linear map between vector lattices that preserves the lattice operations in both directions.

Let $E$ be a Banach lattice. The norm is called \emph{order continuous} if $x_\alpha \downarrow 0$ implies $\norm{x_\alpha} \to 0$ for each decreasing net. An element $e \in E_+$ is called a \emph{quasi-interior point} if the principal ideal $E_e$ is norm dense in $E$; in a Banach lattice with order continuous norm, each weak unit is a quasi-interior point. A functional $\varphi \in E'$ is called \emph{strictly positive} if $\varphi(x) > 0$ for each $0 < x \in E_+$, and \emph{order continuous} (respectively \emph{$\sigma$-order continuous}) if $x_\alpha \downarrow 0$ (respectively $x_n \downarrow 0$) implies $\varphi(x_\alpha) \to 0$. The \emph{null ideal} of $\varphi \in E'_+$ is $N_\varphi \coloneqq \set{x \in E : \dual{\abs{x}, \varphi} = 0}$. An \emph{AL-space} is a Banach lattice whose norm is additive on $E_+$, and an \emph{AM-space} is a Banach lattice such that $\norm{x \vee y} = \max(\set{\norm{x}, \norm{y}})$ for all $x, y \in E_+$. We write $x_\alpha \downarrow 0$ for a net $(x_\alpha)$ in $E_+$ that is decreasing and satisfies $\inf_\alpha x_\alpha = 0$. Similarly, $x_\alpha \uparrow x$ denotes an increasing net with supremum $x$.

Let $T$ be a bounded operator on $E$. The \emph{fixed space} of $T$ is $\fix T \coloneqq \set{x \in E : Tx = x}$. A power-bounded operator $T$ on a Banach space is called \emph{mean ergodic} if the Cesàro means $A_n \coloneqq \frac{1}{n} \sum_{k=0}^{n-1} T^k$ converge strongly. In this case, the limit is a projection $P$, called the \emph{mean ergodic projection}. An element $e \in E_+$ is called a \emph{super fixed point} of $T$ if $Te \leq e$.

\section{An abstract Banach principle} \label{section:an-abstract-banach-principle}

Throughout this section, all vector lattices are assumed to be real and Ar\-chi\-me\-de\-an. Recall that a vector lattice $E$ is called \emph{Dedekind $\sigma$-complete} if every countable, non-empty subset of $E$ which is bounded above has a supremum.

\subsection*{Order convergence and locally solid topologies} \label{subsection:order-convergence-and-locally-solid-topologies}

Let $E$ be a vector lattice. A sequence $(x_n)_{n \in \N}$ in $E$ is said to be \emph{order convergent} to some $x \in E$ if there exists a sequence $(y_n)_{n \in \N}$ in $E$ such that $y_n \downarrow 0$ and $\abs{x_n - x} \leq y_n$ for each $n \in \N$. In this case, we write $x_n \oto x$.

In what follows, we shall make use of the following elementary lemma, which characterises order convergence of order bounded sequences by an oscillation criterion. It seems to be folklore but we were not able to find an exact reference.

\begin{lemma} \label{lemma:order-cauchy}
	Let $E$ be a Dedekind $\sigma$-complete vector lattice and let $(x_n)_{n \in \N}$ be an order bounded sequence in $E$. For each $N \in \N$ the element
	\begin{align*}
		s_N \coloneqq \sup_{n, m \geq N} \abs{x_n - x_m}
	\end{align*}
	exists, the sequence $(s_N)_{N \in \N}$ is decreasing and the following assertions are equivalent:
	\begin{enumerate}[(i)]
		\item The sequence $(x_n)_{n \in \N}$ is order convergent.
		\item One has $\inf_{N \in \N} s_N = 0$.
	\end{enumerate}
In this case, the limit $x \coloneqq \inf_{N \in \N} \sup_{n \geq N} x_n$ satisfies $\abs{x_n - x} \leq s_N$ for all $n \geq N$.
\end{lemma}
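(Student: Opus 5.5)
We begin with existence and monotonicity. Since $(x_n)$ is order bounded, there are $a \le x_n \le b$ for all $n$, so $\abs{x_n - x_m} \le b - a$ for all $n,m$. The family $\set{\abs{x_n - x_m} : n,m \ge N}$ is countable and bounded above, so Dedekind $\sigma$-completeness gives the existence of $s_N$. For $N \le N'$ the index set for $s_{N'}$ is contained in that for $s_N$, hence $s_{N'} \le s_N$. Likewise $u_N \coloneqq \sup_{n \ge N} x_n$ and $l_N \coloneqq \inf_{n \ge N} x_n$ exist and lie in $[a,b]$. Moreover $u_N$ is decreasing and $l_N$ is increasing, so $x \coloneqq \inf_N u_N$ exists and $y \coloneqq \sup_N l_N$ exists.

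The key identity to aim for is $s_N = u_N - l_N$. For $n,m \ge N$ we have $x_n - x_m \le u_N - l_N$, and by symmetry $\abs{x_n - x_m} \le u_N - l_N$, so $s_N \le u_N - l_N$. Conversely, for fixed $m \ge N$ and all $n \ge N$ we have $x_n \le x_m + s_N$, so $u_N \le x_m + s_N$. Hence $x_m \ge u_N - s_N$ for all $m \ge N$, which gives $l_N \ge u_N - s_N$. Only translation invariance of suprema is used here, so this step is routine.

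\emph{(ii)$\Rightarrow$(i) and the final estimate.} For $n \ge N$ and $K \ge N$ we have $l_N \le l_K \le x \le u_K \le u_N$; the inner inequality $l_K \le u_{K'}$ holds for all $K, K'$, and taking infima over $K'$ gives $l_K \le x$. Also $l_N \le x_n \le u_N$, so $\abs{x_n - x} \le u_N - l_N = s_N$. If $\inf_N s_N = 0$, then setting $y_n \coloneqq s_n$ gives $y_n \downarrow 0$ with $\abs{x_n - x} \le y_n$, so $x_n \oto x$. The same estimate is the asserted bound, and it holds whenever (ii) holds.

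\emph{(i)$\Rightarrow$(ii).} Suppose $\abs{x_n - z} \le y_n$ with $y_n \downarrow 0$. For $n,m \ge N$, $\abs{x_n - x_m} \le y_n + y_m \le 2y_N$, so $s_N \le 2 y_N$. Then $0 \le \inf_N s_N \le 2\inf_N y_N = 0$. Since order limits are unique in Archimedean lattices, $z = x$, although this is not needed for the stated claim.

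The only mildly delicate point is the reverse inequality $u_N - l_N \le s_N$; the rest is bookkeeping. Alternatively, one can skip that identity and show (ii)$\Rightarrow$(i) directly: $x_m - s_N \le x_n \le x_m + s_N$ gives $l_N \ge x_m - s_N$ and $u_N \le x_m + s_N$. This sandwiches both $x$ and $x_m$ in $[x_m - s_N, x_m + s_N]$, which yields $\abs{x_m - x} \le s_N$ immediately.
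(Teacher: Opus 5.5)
Your proof is correct and is essentially the paper's. The direction (i)$\Rightarrow$(ii) via $s_N \le 2y_N$ is identical. Your closing alternative, which sandwiches $x = \inf_N u_N$ between $x_m - s_N$ and $x_m + s_N$, is exactly how the paper proves (ii)$\Rightarrow$(i) and the final estimate; the paper bounds $u_{N'}$ below by $x_m - s_N$ for $N' \ge N$ directly rather than introducing $l_N$. Your main route through the identity $s_N = u_N - l_N$ is a valid, slightly more informative repackaging, showing that (ii) says precisely that $\limsup$ and $\liminf$ coincide, but it is not a different method.
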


\begin{proof}
	Since $(x_n)_{n \in \N}$ is order bounded and $E$ is Dedekind $\sigma$-complete, the elements $s_N$ and $b_N \coloneqq \sup_{n \geq N} x_n$ exist for each $N \in \N$. Moreover, note that both sequences $(s_N)_{N \in \N}$ and $(b_N)_{N \in \N}$ are decreasing.

	(i) $\Rightarrow$ (ii): Suppose that $(x_n)_{n \in \N}$ is order convergent to some $x \in E$. Then there exists $(y_n)_{n \in \N}$ in $E$ such that $y_n \downarrow 0$ and $\abs{x_n - x} \leq y_n$ for all $n \in \N$. Then $\abs{x_n - x_m} \leq y_n + y_m \leq 2 y_N$ for all $n, m \geq N$. Therefore, $0 \leq s_N \leq 2 y_N \downarrow 0$.

	(ii) $\Rightarrow$ (i): Set $x \coloneqq \inf_{N \in \N} b_N$. Fix $N \in \N$ and $n \geq N$. On the one hand, $x_m \leq x_n + s_N$ for all $m \geq N$, so $x \leq b_N \leq x_n + s_N$. On the other hand, $x_m \geq x_n - s_N$ for all $m \geq N$ yields $b_{N'} \geq x_n - s_N$ for every $N' \geq N$ and thus $x = \inf_{N' \geq N} b_{N'} \geq x_n - s_N$. Consequently, $\abs{x_m - x} \leq s_N$ for all $m \geq N$. In particular, one has $\abs{x_n - x} \leq s_n$ for each $n \in \N$. By assumption, one has $s_n \downarrow 0$ and thus $x_n \oto x$.
\end{proof}

Next, we recall some topological notions that are central for this paper. A linear topology $\tau$ on a vector lattice $E$ is called \emph{locally solid} if it admits a neighbourhood base of zero consisting of solid sets (see \cite[Definition~2.16]{Aliprantis2003}). In this case, the lattice operations on $E$ are uniformly $\tau$-continuous by \cite[Theorem~2.17]{Aliprantis2003}. Furthermore, $\tau$ is said to have the \emph{$\sigma$-Lebesgue property} if $x_n \downarrow 0$ implies $x_n \xrightarrow{\tau} 0$ (see \cite[Definition~3.1]{Aliprantis2003}). Note that, in this case, each order convergent sequence is $\tau$-convergent to the same limit.

The following definition can be found in \cite[Definition~2.27]{Aliprantis2003}: A \emph{Riesz pseudonorm} on $E$ is a map $\rho \colon E \to [0, \infty)$ such that the following assertions hold:
\begin{enumerate}
    \item[(a)] $\rho(x + y) \leq \rho(x) + \rho(y)$ for all $x, y \in E$.
    \item[(b)] $\rho(\lambda x) \to 0$ as $\lambda \to 0$ for each $x \in E$.
    \item[(c)] If $\abs{x} \leq \abs{y}$ for $x,y \in E$, then $\rho(x) \leq \rho(y)$.
\end{enumerate}

It is a well-known fact that locally solid topologies are exactly those generated by families of Riesz pseudonorms (see \cite[Theorem~2.28]{Aliprantis2003}). Moreover, the following lemma holds.

\begin{lemma} \label{lemma:f-norm}
	Let $\tau$ be a metrisable, locally solid linear Hausdorff topology on a vector lattice $E$. Then there exists a Riesz pseudonorm $\rho$ on $E$ which generates $\tau$.
\end{lemma}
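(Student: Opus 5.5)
Since $\tau$ is metrisable, it has a countable neighbourhood base of zero, and I will realise it by a countable family of Riesz pseudonorms. By \cite[Theorem~2.28]{Aliprantis2003}, $\tau$ is generated by some family $\{\rho_i\}_{i \in I}$ of Riesz pseudonorms. More concretely, I would take a countable decreasing base $(V_k)_{k \in \N}$ of solid neighbourhoods of zero. These can be chosen solid because $\tau$ is locally solid. Since $\tau$ is linear, we may also assume $V_{k+1} + V_{k+1} \subseteq V_k$. Each $V_k$ produces a Riesz pseudonorm $\rho_k$ such that the sets $\set{\rho_k < \varepsilon}$ and the $V_k$ are nested into one another, for instance by the standard construction in the proof of \cite[Theorem~2.28]{Aliprantis2003}. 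Then the countable family $(\rho_k)_{k\in\N}$ generates $\tau$.

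I then define
\begin{align*}
	\rho(x) \coloneqq \sum_{k=1}^\infty 2^{-k} \min\set{1, \rho_k(x)}.
\end{align*}
The three axioms are routine to check.
\begin{enumerate}
	\item[(a)] Subadditivity holds because $t \mapsto \min\{1,t\}$ is increasing and subadditive on $[0,\infty)$.
	\item[(b)] For fixed $x$, each $\rho_k(\lambda x) \to 0$ as $\lambda \to 0$, and the tail of the series is uniformly small. Dominated convergence for series therefore gives $\rho(\lambda x) \to 0$.
	\item[(c)] Monotonicity under $\abs{x} \le \abs{y}$ is inherited termwise from the $\rho_k$.
\end{enumerate}

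The remaining step is to show that the topology generated by $\rho$ is $\tau$, meaning that $\set{x : \rho(x) < \varepsilon}$, $\varepsilon > 0$, is a neighbourhood base of zero for $\tau$. Since $\rho$ is subadditive, this base determines a linear topology.
\begin{itemize}
	\item \emph{Each $\set{\rho<\varepsilon}$ is a $\tau$-neighbourhood.} Choose $K$ with $2^{-K} < \varepsilon/2$. Then $\bigcap_{k \le K} \set{\rho_k < \varepsilon/2}$ is a $\tau$-neighbourhood of zero contained in $\set{\rho < \varepsilon}$.
	\item \emph{Each $\set{\rho_k<\delta}$ contains some $\set{\rho<\varepsilon}$.} If $\rho(x) < 2^{-k}\min\{1,\delta\}$, then $\min\{1,\rho_k(x)\} < \min\{1,\delta\}$, which forces $\rho_k(x) < \delta$.
\end{itemize}
The two topologies therefore coincide.

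The Hausdorff assumption is not needed for generation itself. It only guarantees that $\rho(x)=0$ implies $x=0$, in case one wants $\rho$ to be a genuine metric via $d(x,y)=\rho(x-y)$.

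The main obstacle is the first step: obtaining a \emph{countable} generating family of Riesz pseudonorms from a countable solid base. Concretely, one must construct from a solid base $V_k$ a Riesz pseudonorm $\rho_k$ whose small balls lie in $V_k$ and which is $\tau$-continuous. I would try the dyadic-sum gauge
\begin{align*}
	\rho_k(x) \coloneqq \inf\Big\{\textstyle\sum_j 2^{-n_j} : x = \sum_j x_j,\ x_j \in V_{k+n_j}\Big\}.
\end{align*}
This is subadditive by construction and continuous because $V_{m+1}+V_{m+1}\subseteq V_m$. Solidity of the $V_m$, combined with the Riesz decomposition property, should yield monotonicity (c): if $\abs{x}\le\abs{y}$ and $y = \sum y_j$, split $x$ accordingly into pieces dominated by $\abs{y_j}$. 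Alternatively, I could simply cite that in \cite{Aliprantis2003} the generating family can be chosen of the same cardinality as a solid base.
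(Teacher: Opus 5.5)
Your argument is correct. The gluing step $\rho=\sum_k 2^{-k}\min\{1,\rho_k\}$ and its verification are exactly the paper's; where you differ is in how you produce the countable generating family.

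The paper treats Theorem~2.28 of \cite{Aliprantis2003} as a black box. Each member $U_k$ of a countable base of zero contains a basic set $\bigcap_{i\in F_k}\{\rho_i<\varepsilon_k\}$ with $F_k\subseteq I$ finite, so the countable subfamily indexed by $\bigcup_k F_k$ already generates $\tau$. You had the family $\{\rho_i\}_{i\in I}$ in hand in your first sentence, so the step you call the main obstacle disappears under this one-line first-countability argument.

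Your alternative reopens the proof of that theorem. You build the pseudonorms from a countable solid base via a Kakutani-type dyadic gauge, with the Riesz decomposition property giving monotonicity. This is self-contained and even yields more. The single gauge $\rho_1$ built from the whole chain $(V_k)$ satisfies $\{\rho_1<2^{-m}\}\subseteq V_{m+1}\subseteq\{\rho_1\le 2^{-m}\}$, so it generates $\tau$ by itself and no gluing is needed.

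One correction to your sketch concerns where $V_{m+1}+V_{m+1}\subseteq V_m$ is used. Continuity of $\rho_k$ needs only $V_{k+n}\subseteq\{\rho_k\le 2^{-n}\}$, which is immediate from the definition, plus subadditivity. The condition $V_{m+1}+V_{m+1}\subseteq V_m$ is instead what gives the reverse inclusion $\{\rho_k<2^{-m}\}\subseteq V_{k+m}$. This rests on the lemma that $\sum_j 2^{-n_j}\le 2^{-m}$ implies $\sum_j V_{k+n_j}\subseteq V_{k+m}$. To prove it, sort the exponents, note that the partial sums cannot jump over $2^{-m-1}$, split there, and induct on the number of terms. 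Without this inclusion you would only know that $\rho_k$ generates a topology coarser than $\tau$.
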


\begin{proof}
	Since $\tau$ is locally solid, it is generated by a family of Riesz pseudonorms (see \cite[Theorem~2.28]{Aliprantis2003}). As $\tau$ is metrisable, it possesses a countable neighbourhood base of zero, so a countable subfamily $(\rho_n)_{n \in \N}$ already generates $\tau$. Clearly, the standard construction
    \begin{align*}
        \rho \colon E \to [0, \infty), \quad \rho(x) = \sum_{n \in \N} 2^{-n} \min \set{\rho_n(x), 1}
    \end{align*}
    then yields a Riesz pseudonorm which generates $\tau$.
\end{proof}

In the setting of the above lemma, observe that the Riesz pseudonorm $\rho$ which generates $\tau$ is itself $\tau$-continuous since one has
\begin{align*}
    \abs{\rho(x) - \rho(y)} \leq \rho(x - y) \qquad \text{for all } x, y \in E.
\end{align*}
by the inverse triangle inequality.

\subsection*{A Banach principle in vector lattices} \label{subsection:a-banach-principle-in-vector-lattices}

We are now in position to present an abstract Banach principle in vector lattices. As mentioned in the introduction, the main ingredient in many concrete Banach principles is order.
Throughout this subsection, we will work in an abstract setting. In particular, we will work under the following general assumptions.

\begin{general_assumption} \label{general-assumption:standing} Let $X$ be a Banach space and let $E$ be a vector lattice. Further suppose that:
	\begin{enumerate}
		\item[(a)] $E$ is Dedekind $\sigma$-complete.
        \item[(b)] There exists a metrisable, Hausdorff, locally solid linear topology $\tau$ on $E$ that has the $\sigma$-Lebesgue property.
		\item[(c)] There are linear operators $T_n \colon X \to E$, $n \in \N$, that are continuous with respect to the norm topology on $X$ and the topology $\tau$ on $E$.
		\item[(d)] For each $x \in X$ the sequence $(T_n x)_{n \in \N}$ is order bounded in $E$.
	\end{enumerate}
\end{general_assumption}

Under the general assumptions above, we are able to define the \emph{maximal operator}
\begin{align*}
    T^\star \colon X \to E_+, \qquad T^\star x \coloneqq \sup_{n \in \N} \abs{T_n x}.
\end{align*}
Using General~Assumption~\ref{general-assumption:standing}(a)~and~(d), it is easy to see that this operator is well-defined. Note that $T^\star$ is subadditive and satisfies $T^\star(\lambda x) = \abs{\lambda} T^\star x$ for all $\lambda \in \R$ and $x \in X$. Using Baire's category theorem, one can show that the maximal operator is continuous at the origin.

\begin{lemma} \label{lemma:maximal-operator-continuity}
    Let $X$ be a Banach space and let $E$ be a vector lattice. Suppose that General Assumption~\ref{general-assumption:standing} holds. Then the maximal operator $T^\star$ is continuous at $0$.
\end{lemma}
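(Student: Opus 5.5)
We plan a Baire category argument in the spirit of Banach's original proof, carried out with the Riesz pseudonorm $\rho$ from Lemma~\ref{lemma:f-norm} that generates $\tau$. The claim is that for each $\varepsilon > 0$ there is $\delta > 0$ with $\rho(T^\star x) \le \varepsilon$ whenever $\norm{x} \le \delta$.

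\textbf{Step 1: finite maxima.} For $N \in \N$ put $T^\star_N x \coloneqq \max_{n \le N} \abs{T_n x}$. Since the $T_n$ are norm-to-$\tau$ continuous and the lattice operations are $\tau$-continuous, each map $x \mapsto \rho(T^\star_N x)$ is continuous on $X$. Moreover $T^\star_N x \uparrow T^\star x$ in $E$. Then $0 \le T^\star x - T^\star_N x \downarrow 0$, so the $\sigma$-Lebesgue property gives $\rho(T^\star x - T^\star_N x) \to 0$. Together with $\abs{\rho(a)-\rho(b)} \le \rho(a-b)$ this yields $\rho(T^\star_N x) \uparrow \rho(T^\star x)$; monotonicity in $N$ comes from solidity of $\rho$.

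\textbf{Step 2: Baire.} Fix $\varepsilon>0$. For $k \in \N$ define
\begin{align*}
F_k \coloneqq \set{x \in X : \rho\bigl(\tfrac1k T^\star x\bigr) \le \varepsilon} = \bigcap_{N\in\N} \set{x \in X : \rho\bigl(\tfrac1k T^\star_N x\bigr) \le \varepsilon}.
\end{align*}
The equality follows from the monotone convergence in Step~1, and it shows that each $F_k$ is closed. Each $F_k$ is also symmetric, since $T^\star(-x) = T^\star x$. We have $X = \bigcup_k F_k$: for fixed $x$, $\rho(\lambda T^\star x) \to 0$ as $\lambda \to 0$ by axiom~(b). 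By Baire's theorem some $F_k$ contains a ball $B(x_0, r)$.

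\textbf{Step 3: translate to the origin.} Let $\norm{y} \le r$. Write $y = \tfrac12\bigl((x_0+y) - (x_0-y)\bigr)$, where both $x_0 \pm y$ lie in $F_k$. Subadditivity and homogeneity of $T^\star$ give $T^\star y \le \tfrac12\bigl(T^\star(x_0+y) + T^\star(x_0-y)\bigr)$, so by solidity and subadditivity of $\rho$,
\begin{align*}
\rho\bigl(\tfrac1{k} T^\star y\bigr) \le \rho\bigl(\tfrac1{2k}T^\star(x_0+y)\bigr) + \rho\bigl(\tfrac1{2k}T^\star(x_0-y)\bigr) \le 2\varepsilon .
\end{align*}
Here we also used $\rho(\tfrac12 a) \le \rho(a)$ for $a \ge 0$, again by solidity. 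Now take $\norm{x} \le r/k$ and set $y \coloneqq kx$. Then $T^\star x = \tfrac1k T^\star y$, hence $\rho(T^\star x) \le 2\varepsilon$. This gives continuity at $0$, since $T^\star 0 = 0$.

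The main obstacle is the non-homogeneity of $\rho$: we cannot simply rescale the estimate. This is why the scaling factor $\tfrac1k$ is built into the sets $F_k$ rather than using level sets $\set{\rho(T^\star x)\le k}$, which need not exhaust $X$ in a usable way. The other delicate point is the closedness of $F_k$, which hinges on the $\sigma$-Lebesgue property in Step~1.
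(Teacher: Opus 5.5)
Your proof is correct and follows the paper's argument essentially step for step. You approximate $T^\star$ by finite maxima and use the $\sigma$-Lebesgue property to get closed sets with the scaling $\tfrac1k$ built in, then apply Baire, translate to the origin and rescale. The only difference is cosmetic: you symmetrize via $x_0 \pm y$ with a factor $\tfrac12$, whereas the paper uses $T^\star y \le T^\star(x_0+y) + T^\star x_0$ directly.
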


\begin{proof}
    By General Assumption~\ref{general-assumption:standing}~(b) and Lemma~\ref{lemma:f-norm}, the topology $\tau$ on $E$ is generated by a Riesz pseudonorm $\rho$. For each $N \in \N$ we define the mapping
    \begin{align*}
        a_N \colon X \to E_+, \quad a_N(x) \coloneqq \max_{n \leq N} \abs{T_n x}.
    \end{align*}
    Since the lattice operations are $\tau$-continuous and each $T_n$ is continuous with respect to $\tau$, it follows that each $a_N$ is continuous with respect to $\tau$. Moreover, one clearly has $a_N(x) \uparrow T^\star x$ as $N \to \infty$ for each $x \in X$. Thus, the $\sigma$-Lebesgue property (cf.\ General Assumption~\ref{general-assumption:standing}~(b)) yields $a_N(x) \xrightarrow{\tau} T^\star x$ and hence
	\begin{align} \label{eq:sup-formula}
		\rho(\lambda a_N(x)) \uparrow \rho(\lambda T^\star x)
	\end{align}
	as $N \to \infty$ for each $\lambda > 0$ and $x \in X$ by the monotonicity of $\rho$.

    Fix $\varepsilon > 0$. Consider the closed sets
	\begin{align*}
		A_k \coloneqq \set[\big]{x \in X : \rho\big(\tfrac{1}{k} a_N(x)\big) \leq \varepsilon \text{ for all } N \in \N}, \ k \in \N.
	\end{align*}
	By \eqref{eq:sup-formula}, one has $x \in A_k$ if and only if $\rho(\tfrac{1}{k} T^\star x) \leq \varepsilon$. Since $\rho(\tfrac{1}{k} T^\star x) \to 0$ as $k \to \infty$ for each fixed $x \in X$, this shows that $X = \bigcup_{k \in \N} A_k$. Hence, by the Baire category theorem there exist $k_0 \in \N$, $x_0 \in X$ and $\delta > 0$ such that $\rho(\tfrac{1}{k_0} T^\star z) \leq \varepsilon$ whenever $\norm{z - x_0} \leq \delta$. Now, let $y \in X$ with $\norm{y} \leq \delta$. By subadditivity, one has $T^\star y \leq T^\star(x_0 + y) + T^\star x_0$ and therefore
	\begin{align*}
		\tfrac{1}{k_0} T^\star y \leq \tfrac{1}{k_0} T^\star (x_0 + y) + \tfrac{1}{k_0} T^\star x_0.
	\end{align*}
	Consequently, the monotonicity and the subadditivity of $\rho$ imply
	\begin{align*}
		\rho(\tfrac{1}{k_0} T^\star y) \leq \rho(\tfrac{1}{k_0} T^\star (x_0 + y)) + \rho(\tfrac{1}{k_0} T^\star x_0) \leq 2 \varepsilon.
	\end{align*}
	Finally, let $x \in X$ with $\norm{x} \leq \tfrac{\delta}{k_0}$. Writing $x = \tfrac{1}{k_0} y$ with $\norm{y} \leq \delta$ and using the positive homogeneity of $T^\star$, we obtain $\rho(T^\star x) = \rho(\tfrac{1}{k_0} T^\star y) \leq 2 \varepsilon$. Thus, $T^\star$ is continuous at $0$.
\end{proof}

Next, we state the main theorem of this section, an abstract lattice-theoretic version of Banach's principle.

\begin{theorem}[Banach's principle] \label{theorem:banach-principle}
	Let $X$ be a Banach space and let $E$ be a vector lattice. Suppose that General Assumption~\ref{general-assumption:standing} holds. Then the following two assertions hold:
	\begin{enumerate}[\upshape (i)]
		\item The \emph{convergence set}
		\begin{align*}
			C \coloneqq \set{x \in X : (T_n x)_{n \in \N} \text{ is order convergent in } E}
		\end{align*}
		is a closed subspace of $X$.

		\item If there exists a dense subset $D \subseteq X$ such that $(T_n x)_{n \in \N}$ is order convergent for each $x \in D$, then $C = X$.
	\end{enumerate}
\end{theorem}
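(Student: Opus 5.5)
The plan is to reduce everything to the oscillation criterion of Lemma~\ref{lemma:order-cauchy} and to control the oscillation with the maximal operator $T^\star$ from Lemma~\ref{lemma:maximal-operator-continuity}. For $x \in X$ we put
\begin{align*}
	\omega(x) \coloneqq \inf_{N \in \N} \sup_{n, m \geq N} \abs{T_n x - T_m x} \in E_+.
\end{align*}
This element exists by General Assumption~\ref{general-assumption:standing}(a) and~(d), and it is the infimum of a decreasing sequence. By Lemma~\ref{lemma:order-cauchy}, $x \in C$ if and only if $\omega(x) = 0$.

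For the subspace property, note that $\abs{T_n(x+y) - T_m(x+y)} \leq \abs{T_n x - T_m x} + \abs{T_n y - T_m y}$. Taking suprema over $n,m \geq N$ gives $s_N(x+y) \leq s_N(x) + s_N(y)$, and since both sequences decrease, passing to the infimum yields $\omega(x+y) \leq \omega(x) + \omega(y)$. We also have $\omega(\lambda x) = \abs{\lambda}\omega(x)$. Hence $C$ is a linear subspace. (Alternatively, one can use that order limits are additive.)

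The key pointwise estimate is
\begin{align*}
	\omega(x) \leq s_N(x) \leq 2 T^\star x \qquad \text{for all } x \in X,
\end{align*}
which holds because $\abs{T_n x - T_m x} \leq \abs{T_n x} + \abs{T_m x} \leq 2T^\star x$.

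For closedness, let $x_k \in C$ with $x_k \to x$ in norm. Subadditivity gives
\begin{align*}
	\omega(x) \leq \omega(x - x_k) + \omega(x_k) = \omega(x - x_k) \leq 2 T^\star(x - x_k).
\end{align*}
Let $\rho$ be the Riesz pseudonorm generating $\tau$ from Lemma~\ref{lemma:f-norm}. By monotonicity, $\rho(\omega(x)) \leq \rho(2T^\star(x-x_k))$. Lemma~\ref{lemma:maximal-operator-continuity} says $T^\star$ is continuous at $0$, i.e.\ $\rho(T^\star z) \to 0$ as $\norm{z} \to 0$. Applied to $z = 2(x - x_k)$ and using $T^\star(2z') = 2T^\star z'$, this gives $\rho(\omega(x)) = 0$. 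Since $\tau$ is Hausdorff and generated by $\rho$, we conclude $\omega(x) = 0$, so $x \in C$.

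Part~(ii) then follows at once: $D \subseteq C$, $C$ is closed and $D$ is dense, so $C = X$.

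The only genuinely delicate step is the closedness argument, which needs continuity of $T^\star$ at zero. That continuity is exactly the Baire-category content supplied by Lemma~\ref{lemma:maximal-operator-continuity}. The remaining steps are routine checks: that the infimum defining $\omega$ interacts correctly with sums, which holds because all the sequences involved are decreasing, and that $\rho(w) = 0$ with $w \geq 0$ forces $w = 0$.
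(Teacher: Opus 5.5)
Your proof is correct and follows essentially the same route as the paper. Both arguments reduce to the oscillation criterion of Lemma~\ref{lemma:order-cauchy}, bound the oscillation of $x$ by $2T^\star(x-x_k)$ using that $x_k$ has vanishing oscillation, and conclude via the continuity of $T^\star$ at $0$ (Lemma~\ref{lemma:maximal-operator-continuity}) together with the Hausdorff property. Packaging the oscillation as a sublinear map $\omega$ is only a cosmetic reorganisation of the paper's direct three-term triangle-inequality estimate.
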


\begin{proof}
	(i): Clearly, $C$ is a subspace of $X$. To show its closedness, let $(x_k)_{k \in \N}$ be a sequence in $C$ converging to some $x \in X$. As $(T_n x)_{n \in \N}$ is order bounded, Lemma~\ref{lemma:order-cauchy} is applicable and it suffices to show that the oscillation
	\begin{align*}
		y \coloneqq \inf_{K \in \N} \sup_{n, m \geq K} \abs{T_n x - T_m x}
	\end{align*}
	vanishes. To this end, fix $k \in \N$. For every $K \in \N$ and all $n, m \geq K$ one has
	\begin{align*}
		\abs{T_n x - T_m x} &\leq \abs{T_n(x - x_k)} + \abs{T_m(x - x_k)} + \abs{T_n x_k - T_m x_k} \\
        &\leq 2 \, T^\star(x - x_k) + \sup_{n, m \geq K} \abs{T_n x_k - T_m x_k}.
	\end{align*}
    Taking the supremum over $n, m \geq K$, we obtain
    \begin{align*}
        \sup_{n, m \geq K} \abs{T_n x - T_m x} \leq 2 \, T^\star(x - x_k) + \sup_{n, m \geq K} \abs{T_n x_k - T_m x_k}.
    \end{align*}

	Taking the infimum over $K \in \N$, we conclude from Lemma~\ref{lemma:order-cauchy}, applied to the order convergent sequence $(T_n x_k)_{n \in \N}$, that
	\begin{align*}
		0 \leq y \leq 2 \, T^\star (x - x_k)
	\end{align*}
    for all $k \in \N$. By General Assumption~\ref{general-assumption:standing}~(b) and Lemma~\ref{lemma:f-norm}, the topology $\tau$ on $E$ is generated by a Riesz pseudonorm $\rho$. Since $x - x_k \to 0$ in $X$, Lemma~\ref{lemma:maximal-operator-continuity} and the monotonicity of $\rho$ imply $\rho(y) \leq 2 \rho(T^\star(x - x_k)) \to 0$ as $k \to \infty$. Thus, $\rho(y) = 0$ and since $\tau$ is Hausdorff, we obtain $y = 0$.

	(ii): By assertion~(i), $C$ is a closed subspace of $X$ containing the dense set $D$, whence $C = X$.
\end{proof}

Theorem~\ref{theorem:banach-principle} yields a reduction scheme which underlies the applications in Section~\ref{section:applications-and-relations-to-the-classical-theory}: to establish order convergence of the operators on the whole space, it suffices to verify the maximal hypothesis and to exhibit a dense class on which each orbit order converges.

\begin{remark} \label{remark:uo}
	Under General Assumption~\ref{general-assumption:standing}(d), every orbit $(T_n x)_{n \in \N}$ is order bounded, so its order convergence is equivalent to its uo-convergence. Consequently, the convergence set of Theorem~\ref{theorem:banach-principle} can equivalently be written as
	\begin{align*}
		C = \set{x \in X : (T_n x)_{n \in \N} \text{ is uo-convergent in } E}.
	\end{align*}
	In the model case discussed in Section~\ref{section:applications-and-relations-to-the-classical-theory} this is precisely the mechanism of the classical principle: the maximal hypothesis upgrades plain almost everywhere convergence, which corresponds to uo-convergence, to dominated almost everywhere convergence, which corresponds to order convergence.
\end{remark}

In our abstract framework, we can revisit parts of \cite[Theorem~7.52]{Eisner2025}.

\begin{corollary} \label{corollary:banach-principle-equivalent-forms}
    Let $X$ be a Banach space and let $E$ be a vector lattice. Suppose that General Assumption~\ref{general-assumption:standing}\upshape{(a)--(c)} hold. Consider the following assertions:
    \begin{enumerate}[\upshape (i)]
        \item $(T_n x)_{n \in \N}$ is order convergent in $E$ for every $x \in X$.
        \item $(T_n x)_{n \in \N}$ is order bounded in $E$ for every $x \in X$, i.e., General Assumption~\ref{general-assumption:standing}\upshape{(d)} holds.
        \item The convergence set
        \begin{align*}
            C \coloneqq \set{x \in X : (T_n x)_{n \in \N} \text{ is order convergent in } E}
        \end{align*}
        is a closed subspace of $X$.
    \end{enumerate}
    Then \upshape{(i)} $\Rightarrow$ \upshape{(ii)} $\Rightarrow$ \upshape{(iii)}. Moreover, one has \upshape{(iii)} $\Rightarrow$ \upshape{(i)} if there exists a dense subset $D \subseteq X$ such that $(T_nx)_{n\in\N}$ is order convergent in $E$ for every $x \in D$.
\end{corollary}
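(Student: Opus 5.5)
The implications are of very different difficulty, so I would treat them one at a time, and in each case reduce to results already proved.

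For (i) $\Rightarrow$ (ii), I would use the definition of order convergence directly. Let $x \in X$ and suppose $T_n x \oto z$. Then there is a sequence $(y_n)$ with $y_n \downarrow 0$ and $\abs{T_n x - z} \le y_n$ for all $n$. Since the $y_n$ decrease, $y_n \le y_1$, and hence
\begin{align*}
	\abs{T_n x} \le \abs{z} + y_1 \qquad \text{for all } n \in \N.
\end{align*}
So the orbit lies in the interval $[-(\abs{z}+y_1), \abs{z}+y_1]$ and is order bounded. This uses only the definition; no completeness or topology is needed.

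For (ii) $\Rightarrow$ (iii), I would note that (ii) is exactly General Assumption~\ref{general-assumption:standing}(d). Together with the standing hypotheses (a)--(c), the full General Assumption~\ref{general-assumption:standing} is then in force. Theorem~\ref{theorem:banach-principle}(i) therefore applies verbatim and shows that $C$ is a closed subspace.

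For (iii) $\Rightarrow$ (i) under the density hypothesis, I would use only that $C$ is closed. By assumption $D \subseteq C$, so $X = \overline{D} \subseteq \overline{C} = C$, hence $C = X$, which is (i).

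The only delicate point is to make sure that (iii) $\Rightarrow$ (i) does not tacitly use (d). Theorem~\ref{theorem:banach-principle}(ii) is stated under the full General Assumption, so I would not cite it here. The closure argument above needs nothing beyond (iii) and the density of $D$, so the step is purely set-theoretic. I expect no genuine obstacle; the heavy lifting sits in Theorem~\ref{theorem:banach-principle}, which in turn relies on Lemma~\ref{lemma:maximal-operator-continuity}.
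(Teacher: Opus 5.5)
Your proposal is correct and follows the paper's proof essentially verbatim: (i) $\Rightarrow$ (ii) via the bound $\abs{T_n x} \le \abs{z} + y_1$, (ii) $\Rightarrow$ (iii) by Theorem~\ref{theorem:banach-principle}(i) once (d) completes General Assumption~\ref{general-assumption:standing}, and (iii) $\Rightarrow$ (i) by the closure argument, which the paper simply calls obvious. You were right not to cite Theorem~\ref{theorem:banach-principle}(ii) for the last step, since it only needs the closedness of $C$ and the density of $D$.
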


\begin{proof}
    (i) $\Rightarrow$ (ii): Fix $x \in X$ and suppose $T_n x \oto z$ for some $z \in E$. Then there exists $(y_n)_{n \in \N}$ in $E$ such that $y_n \downarrow 0$ and $\abs{T_n x - z}\leq y_n$ for all $n \in \N$. In particular, $\abs{T_n x} \leq \abs z + y_n \leq \abs z + y_1$ for all $n \in \N$, so $(T_n x)_{n\in\N}$ is order bounded in $E$.

    (ii) $\Rightarrow$ (iii): Since General Assumption~\ref{general-assumption:standing}\upshape{(a)--(c)} together with (ii) yield General Assumption~\ref{general-assumption:standing} in full, this is an immediate consequence of Theorem~\ref{theorem:banach-principle}\upshape{(i)}.

    The remaining assertion is obvious.
\end{proof}

We close this section with the observation that the order limits yield a well-defined operator on the space of order convergence that is compatible with the topology $\tau$ on $E$.

\begin{proposition} \label{proposition:order-limit-operator}
    Let $X$ be a Banach space and let $E$ be a vector lattice. Suppose that General Assumption~\ref{general-assumption:standing} holds. Consider the convergence set
    \begin{align*}
    	C \coloneqq \set{x \in X : (T_n x)_{n \in \N} \text{ is order convergent in } E}.
    \end{align*}
    Then the operator $Q \colon C \to E$, given by
    \begin{align*}
        Qx = \operatorname*{\mathrm{o}\text{--}\lim}_{n\to\infty} T_n x
    \end{align*}
    is continuous with respect to $\tau$.
\end{proposition}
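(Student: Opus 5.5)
The plan is to reduce everything to the continuity of the maximal operator at $0$, i.e.\ to Lemma~\ref{lemma:maximal-operator-continuity}. First, $Q$ is linear on the subspace $C$: order limits in an Archimedean vector lattice are unique and additive and homogeneous. So it suffices to show continuity of $Q$ at $0$, measured by a Riesz pseudonorm $\rho$ generating $\tau$ (Lemma~\ref{lemma:f-norm}). Here $C$ carries the norm topology inherited from $X$. Since $\tau$ is metrisable, sequential continuity at $0$ is enough.

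The key pointwise estimate is $\abs{Qx} \le T^\star x$ for every $x \in C$. To obtain it, let $T_n x \oto Qx$. By Lemma~\ref{lemma:order-cauchy}, $Qx = \inf_{N} \sup_{n \ge N} T_n x$. Since $T_n x \le \abs{T_n x} \le T^\star x$ for all $n$, we get $\sup_{n\ge N} T_n x \le T^\star x$ for every $N$, hence $Qx \le T^\star x$. Applying the same argument to $-x \in C$, with $Q(-x) = -Qx$ and $T^\star(-x) = T^\star x$, gives $-Qx \le T^\star x$. Alternatively, one can pass directly to order limits in $-T^\star x \le T_n x \le T^\star x$, using that order limits preserve inequalities in an Archimedean lattice.

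With this estimate the conclusion is immediate. If $x_k \to 0$ in $C$, the monotonicity of $\rho$ gives $\rho(Qx_k) \le \rho(T^\star x_k)$, and the right-hand side tends to $0$ by Lemma~\ref{lemma:maximal-operator-continuity}. Linearity then transfers continuity at $0$ to continuity everywhere: $\rho(Qx_k - Qx) = \rho(Q(x_k - x))$.

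There is no real obstacle. The only point requiring care is justifying the domination $\abs{Qx}\le T^\star x$ rigorously through the explicit limit formula of Lemma~\ref{lemma:order-cauchy}, rather than through an informal appeal to limits preserving order.
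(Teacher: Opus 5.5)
Your proof is correct and follows the paper's argument: the domination $\abs{Qx}\le T^\star x$, monotonicity of $\rho$, continuity of $T^\star$ at $0$ from Lemma~\ref{lemma:maximal-operator-continuity}, and linearity of $Q$. The only difference is that you derive the domination from the formula $Qx=\inf_N\sup_{n\ge N}T_nx$ of Lemma~\ref{lemma:order-cauchy}, applied to $x$ and $-x$, whereas the paper uses order continuity of the modulus; both are valid. One small correction: sequential continuity suffices because $C$ carries a norm topology, not because $\tau$ is metrisable.
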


\begin{proof}
    Clearly, $Q$ is a well-defined linear operator. Let $x \in C$. Since the modulus is order continuous, $\abs{T_n x} \to \abs{Qx}$ in order as $n \to \infty$. As $\abs{T_n x} \leq T^\star x$ for all $n \in \N$, this implies $\abs{Qx} \leq T^\star x$. Consequently, $\rho(Qx) \leq \rho(T^\star x)$ for each $x \in C$ by monotonicity. So, it follows from Lemma~\ref{lemma:maximal-operator-continuity} that $Q$ is continuous at $0$ and therefore continuous.
\end{proof}

\begin{remark} \label{remark:noncommutative}
	A noncommutative counterpart of Theorem~\ref{theorem:banach-principle} is known: Goldstein and Litvinov \cite[Theorem~2]{Goldstein2000} proved a Banach principle in the space of $\tau$-measurable operators affiliated with a semi-finite von Neumann algebra, endowed with the measure topology. The structural ingredients of their proof match General Assumption~\ref{general-assumption:standing} closely, which we regard as evidence that the assumption isolates the correct axiomatics.
\end{remark}

\section{An abstract individual ergodic theorem} \label{section:an-abstract-individual-ergodic-theorem}

In this section, we demonstrate that the abstract framework of Section~\ref{section:an-abstract-banach-principle} yields an individual ergodic theorem on order continuous Banach lattices with a weak unit, in which the sole analytic hypothesis is order boundedness of the Cesàro means in the universal completion. Our argument is entirely lattice-theoretic. The relevant order-theoretic ambient space is the universal completion.

\subsection*{The universal completion and unbounded order convergence}

Let $E$ be an Archimedean vector lattice. $E$ is called \emph{laterally complete} if every non-empty pairwise disjoint family in $E_+$ has a supremum (cf.\ \cite[Definition~7.1]{Aliprantis2003}). Moreover, $E$ is called \emph{universally complete} if it is Dedekind complete and \emph{laterally complete} (see \cite[Definition~7.18]{Aliprantis2003}). The following result is well-established in the theory of vector lattices (see \cite[Theorem~7.21]{Aliprantis2003}).

\begin{proposition} \label{proposition:existence-universal-completion}
    Let $E$ be an Archimedean vector lattice. Then there exists a universally complete vector lattice $E^u$ containing $E$ as an order dense sublattice. $E^u$ is unique up to a Riesz isomorphism.
\end{proposition}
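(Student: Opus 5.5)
The existence of the universal completion is a classical result; the paper cites \cite[Theorem~7.21]{Aliprantis2003}. I would argue in two stages. First I would pass to the Dedekind completion $E^\delta$ of $E$, in which $E$ sits as an order dense sublattice. This is the classical Dedekind--MacNeille construction by cuts, and it is available precisely because $E$ is Archimedean. Since order density is transitive, it then suffices to construct a universal completion of a Dedekind complete vector lattice $F \coloneqq E^\delta$. That is, we need a universally complete space containing $F$ as an order dense sublattice.

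For the second stage I would use a representation approach. Let $\mathcal{B}$ be the Boolean algebra of bands of $F$; it is complete because $F$ is Dedekind complete, so every band is a projection band. Let $K$ be its Stone space, which is extremally disconnected. The standard Maeda--Ogasawara argument embeds $F$ as an order dense ideal of $C^\infty(K)$. This is the space of continuous functions $K \to [-\infty,\infty]$ that are finite on a dense open set, with the lattice-linear operations defined pointwise off meagre sets. I would then verify three things in turn:
\begin{enumerate}[(a)]
\item $C^\infty(K)$ is a vector lattice. Extremal disconnectedness ensures that sums of such functions extend continuously from a dense open set.
\item $C^\infty(K)$ is Dedekind complete. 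Suprema are obtained as the continuous extension of the lower semicontinuous regularisation, which again uses extremal disconnectedness.
\item $C^\infty(K)$ is laterally complete. For a disjoint family, the supports are disjoint clopen sets, so the function can be pasted on their union and extended by continuity to its closure; the remaining part is meagre.
\end{enumerate}
Order density of the image of $F$ follows because every nonzero $f \in C^\infty(K)_+$ dominates a positive multiple of the indicator of some nonempty clopen set. Such an indicator corresponds to a band of $F$ and is therefore dominated by an element of that band.

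For uniqueness, let $E \subseteq G_1$ and $E \subseteq G_2$ be two universally complete spaces, each containing $E$ as an order dense sublattice. I would extend the identity of $E$ to a map $G_1 \to G_2$ in two steps.
\begin{enumerate}[(a)]
\item Every $0 \le g \in G_1$ is the supremum of $\{x \in E_+ : x \le g\}$, because $E$ is order dense in $G_1$ and $G_1$ is Archimedean.
\item To make sense of these suprema in $G_2$, I would first show that every positive element of $G_1$ is a disjoint supremum of elements lying in order bounded pieces. Lateral completeness of $G_2$ then lets us define $\Phi(g) \coloneqq \sup_{G_2}\{x \in E_+ : x\le g\}$.
\end{enumerate}
One then checks that $\Phi$ is additive and a lattice homomorphism, and that the symmetric construction gives its inverse.

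The main obstacle is this existence of suprema in $G_2$ in step (b), together with the additivity of $\Phi$. Here I would reduce to the case of a weak unit via band decompositions and disjoint sums, and use that order dense embeddings preserve all existing suprema and infima.
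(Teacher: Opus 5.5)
The paper does not prove this proposition; it only cites \cite[Theorem~7.21]{Aliprantis2003}, so your argument has to stand on its own. The existence half is the standard representation-theoretic route and is fine in outline: pass to $E^\delta$, embed into $C^\infty(K)$ over the Stone space of the band algebra, and check that $C^\infty(K)$ is universally complete.

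The one slip there is your order density check, which runs the wrong way. You need a non-zero element of $F$ whose image lies \emph{below} $\varepsilon\one_U$. An element dominating $\one_U$ does not help, and it need not even exist: for the Dedekind complete space $F=c_{00}$ one gets $K=\beta\N$ and $C^\infty(K)\cong\R^\N$, and nothing in $c_{00}$ dominates $\one_K$. Instead, take $0<x$ in the band corresponding to $U$, and a non-empty clopen $V\subseteq U$ on which the image $\hat x$ takes values in $[a,b]$ with $0<a\le b<\infty$. Then $0<\tfrac{\varepsilon}{b}\widehat{P_Vx}\le\varepsilon\one_U\le f$.

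The genuine gap is in the uniqueness half, at exactly the step you flag. That $A_g\coloneqq\set{x\in E_+ : x\le g}$ is bounded above in $G_2$ for every $0\le g\in G_1$ is the whole content of uniqueness. Reducing ``to the case of a weak unit'' does not reach it: even if $E$ has a weak unit, $g$ need not be majorised by any element of $E$ (take $E=L^\infty(0,1)$, $G_1=G_2=L^0(0,1)$, $g(t)=1/t$). Two ingredients are missing.
\begin{enumerate}[(1)]
\item A partition of $G_1$ into disjoint bands $C_i$, with projections $\pi_i$, such that $\pi_i g\le v_i$ for some $v_i\in E_+$. This comes from Zorn's lemma and the Archimedean property. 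For $0<y\in E$, let $P_n$ be the projection onto $\set{(g-ny)^+}^{\perp\perp}$. Then $n\inf_k P_k y\le g$ for all $n$, so $\inf_k P_k y=0$. Hence $(I-P_n)y>0$ and $(I-P_n)g\le ny$ for some $n$.
\item A transfer of this partition to $G_2$. Order density of $E$ in both spaces makes $B\mapsto B\cap E$ an isomorphism of band algebras $\mathcal{B}(G_1)\cong\mathcal{B}(E)\cong\mathcal{B}(G_2)$. This yields disjoint band projections $\sigma_i$ of $G_2$ with $\sup_i\sigma_i=I$.
\end{enumerate}
With these, $w_i\coloneqq\sup_{G_2}A_{\pi_i g}$ exists, being bounded by $v_i$, and lies in $\sigma_iG_2$. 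So $w\coloneqq\sup_i w_i$ exists by lateral completeness of $G_2$. For $x\in A_g$, every $y\in E_+$ with $y\le\sigma_i x$ lies in $C_i$, hence $y=\pi_i y\le\pi_i g$. Thus $\sigma_i x\le w_i$, and $x=\sup_i\sigma_i x\le w$.

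Additivity, by contrast, is not an obstacle once $\Phi$ is defined. For $x\in A_{g_1+g_2}$, split $x=h_1+h_2$ with $0\le h_j\le g_j$ in $G_1$. Then $x=\sup_{G_1}(A_{h_1}+A_{h_2})$, and regularity of $E$ in $G_1$ and in $G_2$ gives $x\le\Phi(g_1)+\Phi(g_2)$.

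Finally, the proposition only asserts uniqueness up to \emph{some} Riesz isomorphism, so you could bypass $\Phi$ entirely. Each $G_i$ has band algebra isomorphic to $\mathcal{B}(E)$. A universally complete space is lattice isomorphic to $C^\infty(K)$ for the Stone space $K$ of its band algebra, which you can show with the machinery from your existence half.
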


The vector lattice $E^u$ from the proposition above is called the \emph{universal completion} of $E$. The order density of $E$ in its universal completion $E^u$ has several direct consequences that we will frequently make use of throughout this section. We omit the elementary proofs of these statements.

\begin{lemma} \label{lemma:universal-completion-properties}
    Let $E$ be an Archimedean vector lattice and $E^u$ its universal completion. Then the following assertions hold:
    \begin{enumerate}[\upshape (i)]
        \item $E$ is a regular sublattice of $E^u$, i.e., suprema and infima of subsets of $E$ computed in $E$ agree with those computed in $E^u$.
        \item If $E$ is Dedekind $\sigma$-complete, then $E$ is an ideal in $E^u$. In particular, one has $[0,e]_{E^u} \subseteq E$ for each $e \in E_+$.
    \end{enumerate}
\end{lemma}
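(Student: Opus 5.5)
For (i), I would prove the standard fact that an order dense sublattice is regular. Let $A \subseteq E$ have supremum $s$ in $E$. Then $s$ is an upper bound of $A$ in $E^u$. Suppose $u \in E^u$ is another upper bound with $s \wedge u < s$. Then $w \coloneqq s - s \wedge u$ satisfies $w > 0$ in $E^u$. By order density there is $v \in E$ with $0 < v \le w$. I then claim that $s - v$ is an upper bound of $A$ in $E$. For $a \in A$ we have $a \le s \wedge u$, hence
\begin{align*}
(s - v) - a = (s - s\wedge u - v) + (s \wedge u - a) \ge 0,
\end{align*}
because $v \le s - s \wedge u$. This contradicts the minimality of $s$ in $E$. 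Therefore $s \le u$, so $s = \sup A$ in $E^u$. Infima follow by passing to $-A$.

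For (ii), the main point is to show that $[0,e]_{E^u} \subseteq E$ for each $e \in E_+$. Once this holds, $E$ is an ideal in $E^u$: if $\abs{y} \le \abs{x}$ with $x \in E$, then $y^\pm \in [0,\abs{x}]_{E^u} \subseteq E$, and hence $y \in E$. So fix $0 \le z \le e$ with $z \in E^u$. I would use the standard fact that an Archimedean order dense sublattice is order dense in the strong sense: every positive element of $E^u$ is the supremum in $E^u$ of $D \coloneqq \set{v \in E : 0 \le v \le z}$. This can be proved by the same density argument as above. If $u \ge D$ with $z \not\le u$, then $(z-u)^+ > 0$, and I pick $0 < v \le (z-u)^+$ in $E$. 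Then one checks, with the usual Archimedean iteration $v, 2v, \dots$ or via the Riesz decomposition, that $D + v$-type elements remain in $D$, which contradicts the choice of $u$. This step is the one I expect to be most delicate.

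The set $D$ is bounded above by $e$ in $E$, but it may be uncountable, while $E$ is only Dedekind $\sigma$-complete. To get around this, I would pass to a countable subfamily. Since $E^u$ is universally complete, I would take a maximal disjoint system together with band projections in $E^u$, or, more simply, realise $E^u$ as $C^\infty(K)$ with $K$ extremally disconnected. That realisation is a representation, though, and the paper avoids those. As an alternative, I would use Freudenthal's spectral theorem in the principal ideal $E^u_e$. It gives $z$ as the order limit of an increasing sequence of step functions $\sum_i \lambda_i P_i e$, where the $P_i$ are band projections of $E^u$. The remaining task is to show $P e \in E$ for a band projection $P$ of $E^u$. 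Here I would use $Pe = \sup_n (e \wedge n z')$ for a suitable $z'$, and iterate the argument, reducing to countable suprema $\sup_n (e \wedge n v)$ with $v \in E$. Such suprema exist in $E$ by $\sigma$-completeness and agree with the corresponding suprema in $E^u$ by (i).

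Putting these together, $z$ is a countable increasing supremum of elements of $E$ that is bounded by $e$, so $z \in E$ by (i) and Dedekind $\sigma$-completeness. The main obstacle is the reduction from arbitrary suprema to countable ones, that is, showing $Pe \in E$ for band projections $P$ of $E^u$.
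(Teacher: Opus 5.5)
Your proof of (i) is correct. In (ii), the step you identify as the main obstacle cannot be carried out: showing $Pe \in E$ for every band projection $P$ of $E^u$.

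Dedekind $\sigma$-completeness only gives projections onto principal bands, $P_{\{v\}^{\perp\perp}}e = \sup_n (e \wedge nv)$ with $v \in E$. A band projection of $E^u$ corresponds to an arbitrary band of $E$, which need not be generated by countably many elements of $E$. The ``suitable $z'$'' with $Pe = \sup_n (e \wedge nz')$ exists in $E^u$ (e.g.\ $z' = Pe$), but nothing lets you choose it in $E$, so the proposed iteration has nothing to reduce to.

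In fact (ii) is false as stated. The paper omits the proof as elementary, so there is no argument to compare with. Here is a counterexample:
\begin{enumerate}[(1)]
\item Let $\Gamma$ be uncountable and let $E$ consist of all $f \colon \Gamma \to \mathbb{R}$ that are constant outside a countable set. This is a sublattice of $\mathbb{R}^\Gamma$.
\item $E$ is Dedekind $\sigma$-complete: the pointwise supremum of a countable family bounded above in $E$ is constant outside the countable union of the exceptional sets, hence lies in $E$.
\item $E$ contains all $\one_{\{\gamma\}}$, so it is order dense in the universally complete space $\mathbb{R}^\Gamma$, and therefore $E^u = \mathbb{R}^\Gamma$.
\item Take $\Gamma_0 \subseteq \Gamma$ with $\Gamma_0$ and $\Gamma \setminus \Gamma_0$ both uncountable. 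Then $0 \le \one_{\Gamma_0} \le \one \in E$, but $\one_{\Gamma_0} \notin E$.
\end{enumerate}
This $\one_{\Gamma_0}$ is exactly your $Pe$ for the band projection onto functions supported in $\Gamma_0$. Every $v \in E$ with $0 \le v \le \one_{\Gamma_0}$ has countable support, so no countable supremum of elements of $E$ reaches $\one_{\Gamma_0}$. The uncountability of $D$ that you worried about is therefore the genuine obstruction, not a technicality.

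The correct hypothesis is Dedekind completeness, and with it your second step already finishes the proof. The set $D = \{v \in E : 0 \le v \le z\}$ has supremum $z$ in $E^u$ and is bounded above by $e$ in $E$. So $\sup_E D$ exists, and by (i) it equals $\sup_{E^u} D = z$, whence $z \in E$. No Freudenthal or band-projection argument is needed.

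This hypothesis is sharp. Suppose $E$ is an ideal in the Dedekind complete space $E^u$, and let $A \subseteq E$ be nonempty and bounded above by $b \in E$. Then for any $a_0 \in A$ one has $\sup_{E^u} A \in [a_0, b]_{E^u} \subseteq E$, so $E$ is Dedekind complete. Thus $E$ is an ideal in $E^u$ if and only if $E$ is Dedekind complete. The paper's main results only invoke (ii) for order continuous Banach lattices or Dedekind complete spaces, so the corrected statement suffices there.
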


Recall that a sequence $(x_n)_{n \in \N}$ in a vector lattice $E$ is called \emph{unbounded order convergent} (or \emph{uo-convergent}) to $x \in E$, written $x_n \uoto x$, if $\abs{x_n - x} \wedge u \oto 0$ for each $u \in E_+$. We collect the relevant relations between these notions of order and unbounded order convergence in $E$ and in $E^u$ in the following lemma.

\begin{lemma} \label{lemma:uo-vs-o}
	Let $E$ be a Dedekind $\sigma$-complete vector lattice and let $(x_n)_{n \in \N}$ be a sequence in $E$ with limit $x \in E$. Then the following assertions hold.
	\begin{enumerate}[\upshape (i)]
		\item If $x_n \oto x$ in $E$, then $x_n \oto x$ in $E^u$.
		\item If $x_n \oto x$ in $E^u$, then $x_n \uoto x$ in $E$.
		\item If $(x_n)_{n \in \N}$ is order bounded in $E$, the following assertions are equivalent:
        \begin{enumerate}[\upshape (a)]
            \item $x_n \oto x$ in $E$.
            \item $x_n \oto x$ in $E^u$.
            \item $x_n \uoto x$ in $E$.
        \end{enumerate}
	\end{enumerate}
\end{lemma}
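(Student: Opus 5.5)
We prove the three assertions in turn, relying on Lemma~\ref{lemma:universal-completion-properties}: $E$ is a regular sublattice of $E^u$, and, since $E$ is Dedekind $\sigma$-complete, $E$ is an ideal in $E^u$.

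For (i), take $(y_n)$ in $E$ with $y_n \downarrow 0$ in $E$ and $\abs{x_n - x} \le y_n$. Regularity gives $\inf_n y_n = 0$ also in $E^u$. Hence $y_n \downarrow 0$ in $E^u$, and the same dominating sequence witnesses $x_n \oto x$ in $E^u$.

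For (ii), suppose $\abs{x_n - x} \le z_n$ with $z_n \downarrow 0$ in $E^u$, and fix $u \in E_+$. Set $w_n \coloneqq z_n \wedge u$. Then $0 \le w_n \le u$, so the ideal property puts $w_n \in E$. Moreover $(w_n)$ is decreasing, and $\inf_n w_n = 0$ in $E^u$ because $0 \le \inf_n w_n \le \inf_n z_n = 0$. By regularity, $\inf_n w_n = 0$ in $E$ as well. Since $\abs{x_n - x} \wedge u \le z_n \wedge u = w_n$, we get $\abs{x_n - x}\wedge u \oto 0$ in $E$ for every $u$, that is, $x_n \uoto x$ in $E$.

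For (iii), the implications (a)$\Rightarrow$(b)$\Rightarrow$(c) are just (i) and (ii), so it remains to prove (c)$\Rightarrow$(a). Let $\abs{x_n} \le b$ with $b \in E_+$, and put $u \coloneqq b + \abs{x} \in E_+$. Then $\abs{x_n - x} \le u$, so $\abs{x_n - x} = \abs{x_n - x} \wedge u$. Uo-convergence with this particular $u$ therefore gives exactly $\abs{x_n - x} \oto 0$ in $E$, which is $x_n \oto x$ in $E$.

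No step is a real obstacle. The only point needing care is (ii): the dominating sequence $(z_n)$ may lie outside $E$, so it must be truncated by $u$ to bring it into $E$ through the ideal property. Regularity is then what transfers the infimum $0$ from $E^u$ back to $E$.
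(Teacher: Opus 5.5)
Your proof is correct and follows essentially the paper's argument: (i) by regularity of $E$ in $E^u$, (ii) by truncating the dominating sequence with $u$ and using the ideal property together with regularity, and (iii) by testing uo-convergence against a single element that dominates $\abs{x_n - x}$. Your choice $u = b + \abs{x}$ is slightly cleaner than the paper's bound $\abs{x_n - x} \le 2a$, which tacitly uses $\abs{x} \le a$, and your cycle (a)$\Rightarrow$(b)$\Rightarrow$(c)$\Rightarrow$(a) avoids needing a separate argument for (b)$\Rightarrow$(a), which the paper attributes to regularity alone.
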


\begin{proof}
	(i) This is an immediate consequence of the regularity of $E$ in $E^u$ (see Lemma~\ref{lemma:universal-completion-properties}).

	(ii) Let $y_n \downarrow 0$ in $E^u$ with $\abs{x_n - x} \leq y_n$ for each $n \in \N$. Fix $u \in E_+$. Then $\abs{x_n - x} \wedge u \leq y_n \wedge u$ and the sequence $(y_n \wedge u)_{n \in \N}$ is contained in $[0,u]_{E^u} \subseteq E$ by the ideal property (cf.~Lemma~\ref{lemma:universal-completion-properties}). Moreover, $y_n \wedge u \downarrow 0$ in $E^u$, hence in $E$ by regularity. Thus $\abs{x_n - x} \wedge u \oto 0$ in $E$.

	(iii) The equivalence (a) $\Leftrightarrow$ (b) follows from (i) and the regularity of $E$ in $E^u$. To see that (b) $\Rightarrow$ (c), note that order convergence in $E$ always implies uo-convergence. Conversely, for (c) $\Rightarrow$ (b) note that if $(x_n)_{n \in \N}$ is order bounded in $E$, say $\abs{x_n} \leq a$ for some $a \in E$ and all $n \in \N$, and $x_n \uoto x$ in $E$, then $\abs{x_n - x} \leq \abs{x_n} + \abs{x} \leq 2a$ and hence, $\abs{x_n - x} = \abs{x_n - x} \wedge 2a \oto 0$.
\end{proof}

\begin{remark} \label{remark:uo-strictly-weaker}
	The converse of Lemma~\ref{lemma:uo-vs-o}(ii) fails in general. For instance, in $E = L^1(0,1)$ with weak unit $e = \one$, the sequence $x_n = n \, \one_{(0,1/n)}$ uo-converges to $0$ in $E$ but $(x_n)_{n \in \N}$ is not order bounded and hence not order convergent in $E^u = L^0(0,1)$ (cf.\ Proposition~\ref{proposition:l0-universal-completion}). This example also illustrates that order convergence in $E^u$ is strictly stronger than uo-convergence in $E$.
\end{remark}

\subsection*{The gauge metric}

Throughout this subsection, let $E$ be a Banach lattice with order continuous norm and a weak unit $e \in E_+$. Recall that the norm of $E$ is \emph{order continuous} if $x_\alpha \downarrow 0$ in $E$ implies $\norm{x_\alpha} \to 0$ and that $e \in E_+$ is called a weak unit if $x \wedge e = 0$ implies $x = 0$ for all $x \in E_+$. Note that each weak unit in a Banach lattice with order continuous norm is already a quasi-interior point, i.e., the principal ideal $E_e \coloneqq \set{x \in E : \abs x \leq \lambda e \text{ for some } \lambda > 0}$ is dense in $E$. As it turns out, under these assumptions, one can construct a strictly positive functional on $E$.

\begin{lemma} \label{lemma:strictly-positive-functional}
    Let $E$ be a Banach lattice with order continuous norm and a weak unit $e \in E_+$. Then there exists a strictly positive functional $\varphi\in E_+'$, i.e., $\varphi(x) > 0$ for all $0 < x \in E_+$.
\end{lemma}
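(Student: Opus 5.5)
The plan is to obtain a strictly positive functional $\varphi \in E'_+$ by gluing together countably many positive functionals, each of which detects a different part of $E$. For each $x \in E$ there is some $\psi \in E'_+$ with $\psi(\abs{x}) = \norm{x}$ (Hahn--Banach applied in the Banach lattice $E$). We will choose a suitable countable family $(\psi_k)_{k \in \N}$ of such functionals and set
\begin{align*}
    \varphi \coloneqq \sum_{k \in \N} 2^{-k} \frac{\psi_k}{1 + \norm{\psi_k}}.
\end{align*}
This series converges absolutely in $E'$, so $\varphi \in E'_+$. It then remains to show that $\varphi(x) > 0$ for every $0 < x$.

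\textbf{Choice of the family.} The natural candidate is the null ideal. For $\psi \in E'_+$, the set $N_\psi$ is a closed ideal, and order continuity of the norm makes it a band: if $0 \le x_\alpha \uparrow x$ with $x_\alpha \in N_\psi$, then $\norm{x - x_\alpha} \to 0$, so $\psi(x) = 0$. Its disjoint complement $C_\psi \coloneqq N_\psi^{\perp}$ is the carrier band of $\psi$, and on $C_\psi$ the functional $\psi$ is strictly positive. Moreover, we have $N_{\varphi} = \bigcap_k N_{\psi_k}$. The goal is therefore to choose the $\psi_k$ so that the carriers $C_{\psi_k}$ together generate $E$ as a band.

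\textbf{Exhaustion.} This is where the weak unit $e$ enters, and I expect it to be the main step. Consider the family $\mathcal{F}$ of all carriers $C_\psi$ with $\psi \in E'_+$. I would argue as follows.
\begin{enumerate}[(1)]
    \item Take a maximal family of nonzero functionals in $E'_+$ with pairwise disjoint carriers, using Zorn's lemma.
    \item The band generated by these carriers is all of $E$. Otherwise there is some $0 < y$ in its complement, and a functional $\psi$ with $\psi(y) > 0$ can be restricted to the band generated by $y$ via the band projection (which exists because $E$ is Dedekind complete, as order continuity of the norm implies). This contradicts maximality.
    \item The family is countable. Project $e$ onto each carrier, obtaining pairwise disjoint elements $P_k e$. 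By order continuity the finite sums converge in norm, and $\sum_k \norm{P_k e}^2$-type estimates are not available directly. However, for each $n$ only finitely many $k$ can satisfy $\psi_k(P_k e)/\norm{\psi_k} > 1/n$ after normalising. A cleaner route is this: order continuity of the norm implies that every disjoint family bounded by $e$, namely the $P_k e$, has at most countably many nonzero members, since an uncountable disjoint order-bounded family would contain a sequence with norms bounded below whose partial sums are increasing and bounded but not norm Cauchy, contradicting order continuity.
    \item Each $P_k e \ne 0$, because $e$ is a weak unit and the carriers are nonzero bands. Hence the family is countable.
\end{enumerate}

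\textbf{Conclusion.} With the countable family $(\psi_k)$ in hand, define $\varphi$ as above. If $0 < x$ and $\varphi(x) = 0$, then $x \in N_{\psi_k}$ for all $k$, so $x \perp C_{\psi_k}$ for all $k$. Since these carriers generate $E$ as a band, it follows that $x = 0$, a contradiction. The delicate points are the countability argument in step (3), the one relying on order continuity together with the weak unit, and the verification that the carrier of $\psi$ restricted to a band is contained in that band. Both reduce to standard band projection calculus in Dedekind complete lattices.
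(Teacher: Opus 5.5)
Your proof is correct, but it takes a different route from the paper's.

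The paper's proof is two lines. It quotes \cite[Theorem~4.15]{Aliprantis2006} for a positive functional $\varphi$ that is strictly positive on the order interval $[0,e]$. It then uses the weak unit only to spread strict positivity to all of $E_+$: for $0 < x$ one has $0 < x \wedge e \le e$, hence $\varphi(x) \ge \varphi(x \wedge e) > 0$.

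You instead construct $\varphi$ from scratch by exhaustion, and your steps check out:
\begin{enumerate}
\item You take a Zorn-maximal family of positive functionals with pairwise disjoint carriers.
\item Maximality forces the carriers to generate $E$ as a band, because a functional can be localised to $\{y\}^{\perp\perp}$ by a band projection.
\item The family is countable. The map $\psi_k \mapsto P_k e$ injects it into the nonzero members of a disjoint family in $[0,e]$, and there are only countably many of those, since order bounded disjoint sequences are norm null in an order continuous Banach lattice.
\end{enumerate}
So in your argument the weak unit is used for countability, not for the final extension step.

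Your route is self-contained and shows exactly which consequences of order continuity are needed: null ideals are bands, $E$ is Dedekind complete, and order bounded disjoint sequences are norm null. It is essentially the kind of exhaustion argument that sits behind the cited theorem. The paper's route is shorter and more modular: the quoted result gives strict positivity on $[0,e]$ without using that $e$ is a weak unit, so that hypothesis enters only in the one-line extension.

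The hedging at the start of your step (3) can be dropped; the ``cleaner route'' is already a complete argument.
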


\begin{proof}
By \cite[Theorem~4.15]{Aliprantis2006}, there exists a positive functional $\varphi\in E_+'$ such that $\varphi(x) > 0$ for all $0 < x \in [0, e]$. Let $0 < x \in E_+$. Since $e$ is a weak unit, one has $0 < x \wedge e \leq e$. Thus, $\varphi(x \wedge e) > 0$. As $ 0 \leq x \wedge e \leq x$, it follows $\varphi(x) \geq \varphi(x \wedge e) > 0$. So $\varphi$ is strictly positive.
\end{proof}

Fix some $\varphi \in E'_+$ strictly positive as supplied by Lemma~\ref{lemma:strictly-positive-functional} and consider the \emph{gauge metric} of $\varphi$ on $E^u$, given by
\begin{align*}
    d_\varphi \colon E^u \times E^u \to [0, \infty), \quad d_\varphi(f, g) = \varphi(\abs{f - g} \wedge e).
\end{align*}
Note that this mapping is well-defined, since one has $\abs{f - g} \wedge e \in [0, e]_{E^u} \subseteq E$ by the ideal property (cf.~Lemma~\ref{lemma:universal-completion-properties}(ii)). The next theorem collects some crucial properties of this metric. We note that the assertions~(i) and (iii) were already proven in \cite[Lemma~2.1]{Cerreia2022}. For the convenience of the reader we include the simple proofs nonetheless.

\begin{proposition} \label{proposition:gauge-metric}
    Let $E$ be a Dedekind $\sigma$-complete Banach lattice with order continuous norm and a weak unit $e \in E_+$. Then the following assertions hold:
    \begin{enumerate}[\upshape (i)]
        \item $d_\varphi$ is a metric on $E^u$.
        \item The canonical embedding $\iota \colon E \to E^u$ is continuous with respect to $d_\varphi$.
        \item The topology generated by $d_\varphi$ is locally solid.
        \item The topology generated by $d_\varphi$ has the $\sigma$-Lebesgue property.
    \end{enumerate}
\end{proposition}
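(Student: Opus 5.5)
We prove the four assertions separately. Throughout we use that $E$ is an ideal in $E^u$ (Lemma~\ref{lemma:universal-completion-properties}(ii)), so all expressions $\abs{f} \wedge e$ with $f \in E^u$ lie in $[0,e] \subseteq E$. We also use that $\varphi$ is strictly positive and, by order continuity of the norm, order continuous: if $x_\alpha \downarrow 0$ in $E$ then $\norm{x_\alpha} \to 0$, hence $\varphi(x_\alpha) \to 0$.

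\emph{(i).} Symmetry is clear. For the triangle inequality we use the elementary lattice inequality $(a+b) \wedge e \leq a \wedge e + b \wedge e$ for $a,b,e \geq 0$. Together with $\abs{f-h} \leq \abs{f-g} + \abs{g-h}$ and positivity of $\varphi$, this gives $d_\varphi(f,h) \leq d_\varphi(f,g) + d_\varphi(g,h)$. For definiteness, suppose $d_\varphi(f,g)=0$. Strict positivity forces $\abs{f-g} \wedge e = 0$. Since $e$ is a weak unit of $E$ and $E$ is order dense in $E^u$, $e$ is also a weak unit of $E^u$: if $0 < z \in E^u$ with $z \wedge e = 0$, pick $0 < y \leq z$ in $E$; then $y \wedge e = 0$, a contradiction. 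Hence $f = g$.

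\emph{(ii) and (iii).} For (ii), linearity of $\iota$ and $d_\varphi(\iota x, \iota y) \leq \varphi(\abs{x-y}) \leq \norm{\varphi}\norm{x-y}$ give continuity. For (iii), set $\rho(f) \coloneqq \varphi(\abs f \wedge e)$, which is translation invariant with $d_\varphi(f,g) = \rho(f-g)$. We check that $\rho$ is a Riesz pseudonorm in the sense recalled before Lemma~\ref{lemma:f-norm}. Subadditivity is the inequality above, and monotonicity under $\abs{f} \leq \abs{g}$ is clear. Then the sets $\set{\rho < \varepsilon}$ are solid and form a neighbourhood base of zero, so the topology generated by $d_\varphi$ is a locally solid linear topology by \cite[Theorem~2.28]{Aliprantis2003}.

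The only nontrivial point for (iii) is axiom (b), namely $\rho(\lambda f) \to 0$ as $\lambda \to 0$. For $\lambda \downarrow 0$ we have $\abs{\lambda f} \wedge e \downarrow$, and its infimum is $0$ because $E^u$ is Archimedean: if $0 \leq z \leq \lambda\abs f$ for all $\lambda > 0$, then $z = 0$. Countability suffices here, taking $\lambda = 1/n$ and using monotonicity. The decreasing sequence lies in $[0,e] \subseteq E$ with infimum $0$ in $E^u$, hence in $E$ by regularity. Order continuity of $\varphi$ then yields $\rho(\tfrac1n f) \to 0$. With this in hand, continuity of scalar multiplication follows from subadditivity and monotonicity in the standard way.

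\emph{(iv).} Let $f_n \downarrow 0$ in $E^u$. Then $f_n \wedge e \downarrow$ in $[0,e] \subseteq E$, and $\inf_n (f_n \wedge e) = 0$ because $0 \leq \inf_n(f_n \wedge e) \leq \inf_n f_n = 0$ in $E^u$. By regularity the infimum is also $0$ in $E$. Order continuity of the norm gives $\norm{f_n \wedge e} \to 0$, hence $\rho(f_n) = \varphi(f_n \wedge e) \to 0$. The main obstacle overall is this recurring transfer step: moving infima from $E^u$ into $E$ via the ideal and regularity properties, and converting them into decay of $\varphi$ through order continuity of the norm.
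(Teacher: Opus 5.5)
Your proposal is correct and follows the paper's proof essentially step by step: the ideal property of $E$ in $E^u$, the inequality $(a+b)\wedge e \leq a\wedge e + b\wedge e$, strict positivity of $\varphi$ together with $e$ being a weak unit of $E^u$, and order continuity of the norm for the $\sigma$-Lebesgue property. In (iii) you go slightly further than the paper by verifying the Riesz pseudonorm axiom $\rho(\lambda f) \to 0$ as $\lambda \to 0$ via the Archimedean property, which is what guarantees that the metric topology is a linear topology at all---a point the paper's proof of (iii) leaves implicit.
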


\begin{proof}
    (i): The definiteness follows from the strict positivity of $\varphi$ and the fact that $e$ is a weak order unit in $E^u$. Indeed, if $d_\varphi(f,g)=0$, then $\varphi(\abs{f-g}\wedge e)=0$, so strict positivity yields $\abs{f-g}\wedge e=0$; since $e$ is a weak unit in $E^u$, this implies $f=g$. The triangle inequality is a consequence of the general Riesz space identity $(x + y) \wedge z \leq (x \wedge z) + (y \wedge z)$ for all $x, y, z \in E_+$. The other properties are obvious.

    (ii): Let $x, y \in E$. Then one has
    \begin{align*}
        d_\varphi(\iota x, \iota y) = \varphi(\abs{x - y} \wedge e) \leq \norm{\varphi} \cdot \norm{\abs{x - y} \wedge e} \leq \norm{\varphi} \cdot \norm{x - y},
    \end{align*}
    which shows that $\iota$ is even Lipschitz-continuous.

    (iii): Fix $\varepsilon > 0$ and consider the open ball $\mathrm B_\varepsilon = \set{x \in E^u : \varphi(\abs x \wedge e) < \varepsilon}$. Let $x \in \mathrm B_\varepsilon$ and $y \in E^u$ such that $\abs y \leq \abs x$. Then $\varphi(\abs y \wedge e) \leq \varphi(\abs x \wedge e) \leq \varepsilon$, i.e., $y \in \mathrm B_\varepsilon$. Thus, $\mathrm B_\varepsilon$ is solid. Since the open balls around the origin are a neighbourhood base of $0$, it follows that the topology generated by $d_\varphi$ is locally solid.

    (iv): Let $(f_n)_{n \in \N}$ in $E^u$ such that $f_n\downarrow 0$ in $E^u$. Then $f_n \wedge e \downarrow 0$ in $E^u$ and by the ideal property (cf.~Lemma~\ref{lemma:universal-completion-properties}(ii)) one has $f_n\wedge e \in E$ since $f_n \wedge e \leq e$ for all $n \in \N$.
    As $E$ is Dedekind $\sigma$-complete, it follows that $f_n\wedge e \downarrow 0$ in $E$. From the order continuity of the norm it follows that $\norm{f_n \wedge e} \to 0$. Thus,
    \begin{align*}
        0 \leq d_\varphi(f_n,0) = \varphi(f_n \wedge e) \leq \norm{\varphi} \cdot \norm{f_n \wedge e} \to 0
    \end{align*}
    as $n \to \infty$. Thus, the topology generated by $d_\varphi$ has the $\sigma$-Lebesgue property.
\end{proof}

The above proposition shows that under certain conditions on a Riesz space one can define a topology on its universal completion. It is unclear to the author if the universal completion can be equipped with a topology in general.

\subsection*{Individual ergodic theorem}

Using the Banach principle from Section~\ref{section:an-abstract-banach-principle} we are able to prove an abstract individual ergodic theorem on order continuous Banach lattices with a weak unit. Recall that a positive operator $T$ on a Banach space is called \emph{power-bounded} if $\sup_{n \in \N} \norm{T^n} < \infty$, and \emph{mean ergodic} if its Cesàro means $A_n \coloneqq \frac{1}{n} \sum_{k=0}^{n-1} T^k$ converge strongly.

\begin{theorem} \label{theorem:individual-ergodic}
	Let $E$ be an order continuous Banach lattice with a weak unit $e \in E_+$. Let $T$ be a positive, power-bounded, mean ergodic operator on $E$ with mean ergodic projection $P$. Suppose that the following assertions hold:
	\begin{enumerate}[\upshape (i)]
		\item The weak unit is a super fixed point of $T$, i.e., $Te \leq e$.
		\item For each $x \in E$ the sequence $(A_n \abs{x})_{n \in \N}$ is order bounded in $E^u$.
	\end{enumerate}
	Then, for each $x \in E$, the sequence $(A_n x)_{n \in \N}$ is order convergent to $Px$ in $E^u$ and, in particular, $A_n x \uoto Px$ in $E$.
\end{theorem}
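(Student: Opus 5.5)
We will apply Theorem~\ref{theorem:banach-principle} with $X = E$, the target space $E^u$, and $T_n \coloneqq \iota \circ A_n$, where $\tau$ is the topology induced by the gauge metric $d_\varphi$. We first verify General Assumption~\ref{general-assumption:standing}.
\begin{enumerate}
\item[(a)] $E^u$ is Dedekind complete, hence Dedekind $\sigma$-complete.
\item[(b)] By Proposition~\ref{proposition:gauge-metric}, $d_\varphi$ is a metric whose topology is locally solid and has the $\sigma$-Lebesgue property. Note that $E$ is Dedekind $\sigma$-complete, since its norm is order continuous. Linearity of the topology follows from local solidity together with the translation invariance of $d_\varphi$ and $\varphi(\abs{\lambda f} \wedge e) \to 0$ as $\lambda \to 0$, which holds by (iv) because $\abs{\lambda f}\wedge e \downarrow 0$ in $E^u$ as $\lambda \downarrow 0$ along a sequence.
\item[(c)] Each $T_n$ is norm-to-$\tau$ continuous, since $A_n$ is bounded and $\iota$ is Lipschitz by Proposition~\ref{proposition:gauge-metric}(ii).
\item[(d)] For each $x \in E$ we have $\abs{A_n x} \le A_n\abs{x}$ by positivity, and the right-hand side is order bounded in $E^u$ by hypothesis~(ii).
\end{enumerate}
Hence the convergence set $C = \set{x \in E : (A_n x) \text{ order converges in } E^u}$ is closed, and by Theorem~\ref{theorem:banach-principle}(ii) it suffices to exhibit a norm dense set on which convergence holds.

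For the dense set we take $D \coloneqq \fix T + \set{y - Ty : y \in E_e}$. On $\fix T$ we have $A_n x = x$, which is trivially convergent. For $y \in E_e$, say $\abs{y} \le \lambda e$, we have $A_n(y - Ty) = \frac1n (y - T^n y)$. Since $T$ is positive and $Te \le e$, induction gives $\abs{T^n y} \le T^n\abs{y} \le \lambda T^n e \le \lambda e$. Therefore $\abs{A_n(y-Ty)} \le \frac{2\lambda}{n} e \downarrow 0$, so $A_n(y-Ty) \oto 0$ already in $E$, and hence in $E^u$ by Lemma~\ref{lemma:uo-vs-o}(i).

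Density of $D$ comes from mean ergodicity. We have $E = \fix T \oplus \overline{\operatorname{ran}(I-T)}$, and $\operatorname{ran}(I-T)$ is the image of $E$ under the bounded map $I - T$. Since $E_e$ is norm dense in $E$ (the weak unit is quasi-interior by order continuity), $(I-T)E_e$ is dense in $(I-T)E$, and so $D$ is dense. Consequently $C = E$.

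It remains to identify the limit. By Proposition~\ref{proposition:order-limit-operator}, $Q = \mathrm{o}\text{-}\lim A_n$ is $\tau$-continuous on $E$. On $D$ we have $Q = P$: on $\fix T$ both are the identity, and on $(I-T)E_e$ both vanish. The map $\iota \circ P$ is also $\tau$-continuous, so $Q$ and $\iota\circ P$ agree on the dense set $D$, and since $\tau$ is Hausdorff they agree everywhere. Alternatively, one can argue directly: $A_n x \to Px$ in norm, hence $\iota A_n x \to Px$ in $\tau$, while order convergence implies $\tau$-convergence by the $\sigma$-Lebesgue property, so uniqueness of limits gives $Qx = Px$. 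Finally, Lemma~\ref{lemma:uo-vs-o}(ii) upgrades order convergence in $E^u$ to $A_n x \uoto Px$ in $E$.

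The main technical point I expect is verifying that the gauge topology is a genuine linear topology on $E^u$, i.e.\ continuity of scalar multiplication as discussed in (b). The rest of the argument is a direct assembly of the earlier results.
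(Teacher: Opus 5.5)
Your proposal is correct and follows the paper's proof essentially step for step: you verify General Assumption~\ref{general-assumption:standing} for $\iota \circ A_n \colon E \to E^u$ with the gauge topology, use the dense class $\fix T \oplus (I-T)E_e$ of Lemma~\ref{lemma:dense-class}, identify the limit via uniqueness of $d_\varphi$-limits (your ``alternative'' argument is exactly the paper's), and conclude with Lemma~\ref{lemma:uo-vs-o}(ii). Your extra checks are valid refinements of details the paper leaves implicit, not a different route: that $E$ is Dedekind $\sigma$-complete, that the gauge topology is genuinely linear, and that $Q = P$ via density and the continuity from Proposition~\ref{proposition:order-limit-operator}.
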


As a first step towards the proof of Theorem~\ref{theorem:individual-ergodic} we prove the following lemma that provides a dense subspace on which the Cesàro means converge to the mean ergodic projection with respect to the order in $E^u$.

\begin{lemma} \label{lemma:dense-class}
	Let $T$ be a positive, power bounded, mean ergodic operator on a Banach lattice with order continuous norm with mean ergodic projection $P$ and suppose that $Te \leq e$ for some weak unit $e \in E_+$. Then the subspace
    \begin{align*}
        D \coloneqq \fix T \oplus (I - T) E_e
    \end{align*}
    is norm dense in $E$ and $A_n x \oto Px$ for each $x \in D$.
\end{lemma}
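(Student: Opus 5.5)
Two things must be shown: that $D$ is norm dense, and that $A_n x \oto Px$ for every $x \in D$. Order convergence here is meant in $E$; by Lemma~\ref{lemma:uo-vs-o}(i) it then also holds in $E^u$.

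\textbf{Density.} I use the standard mean ergodic decomposition $E = \fix T \oplus \overline{(I-T)E}$, which holds because $T$ is mean ergodic; here $P$ is the projection onto $\fix T$ along $\overline{(I-T)E}$. Since $e$ is a weak unit and $E$ has order continuous norm, $e$ is a quasi-interior point, so $E_e$ is norm dense in $E$. The operator $I-T$ is bounded, so $(I-T)E_e$ is dense in $(I-T)E$ and hence in $\overline{(I-T)E}$. Consequently $\fix T + (I-T)E_e$ is dense in $\fix T \oplus \overline{(I-T)E} = E$. The sum is direct because $(I-T)E_e \subseteq \ker P$.

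\textbf{Convergence on $D$.} For $x \in \fix T$ we have $A_n x = x = Px$ for all $n$, so the convergence is trivial. Now let $x = (I-T)y$ with $y \in E_e$, and choose $\lambda > 0$ with $\abs{y} \le \lambda e$. The sum telescopes:
\begin{align*}
A_n x = \tfrac{1}{n}\left(y - T^n y\right).
\end{align*}
We also have $Px = 0$, because $P(I-T) = 0$. Positivity of $T$ together with $Te \le e$ gives $\abs{T^n y} \le T^n \abs{y} \le \lambda T^n e \le \lambda e$. Hence $\abs{A_n x - Px} \le \tfrac{2\lambda}{n} e$. Since $E$ is Archimedean, $\tfrac{2\lambda}{n} e \downarrow 0$, and therefore $A_n x \oto 0 = Px$ in $E$. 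By linearity of the $A_n$, this extends to all of $D$.

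\textbf{Main obstacle.} No step is genuinely hard. The only point needing care is the density argument: one must pass from $E_e$ to $(I-T)E_e$ via continuity of $I-T$, and must not overlook that the fixed-space component is needed. The super fixed point hypothesis enters exactly once, in the bound $T^n e \le e$, which makes the sequence $(T^n y)$ uniformly order bounded.
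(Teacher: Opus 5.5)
Your proposal is correct and follows essentially the same route as the paper. Density comes from the mean ergodic decomposition together with density of $E_e$ and continuity of $I-T$, and order convergence comes from the telescoping identity $A_n(I-T)y = \tfrac{1}{n}(y - T^n y)$ with the bound $\abs{T^n y} \le \lambda e$ from $Te \le e$ and the Archimedean property.
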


\begin{proof}
	As $E$ is a Banach lattice, it follows that $e$ is a quasi-interior point of $E$ (see \cite[Theorem~4.85]{Aliprantis2006}). Thus, the principal ideal $E_e$ is norm dense in $E$. Since $I - T$ is bounded, it follows that $(I - T) E_e$ is dense in $\overline{(I - T) E}$. As $T$ is mean ergodic, by \cite[Theorem~8.20]{Eisner2015}, one has the mean ergodic decomposition
    \begin{align*}
        E = \fix T \oplus \overline{(I - T) E}.
    \end{align*}
    Consequently, $D$ is norm dense in $E$.

	To prove the statement on the order convergence, let $x = u + (I - T) y$ with $u \in \fix T$ and $y \in E_e$. Then there exists $c > 0$ such that $\abs y \leq c e$. Moreover, $\abs{T^n y} \leq T^n \abs{y} \leq c T^n e \leq c e$ and thus,
	\begin{align*}
		\abs{A_n x - Px} = \abs{A_n (I - T) y} = \frac{1}{n} \abs{y - T^n y} \leq \frac{1}{n} (\abs{y} + \abs{T^n y}) \leq \tfrac{2c}{n} e \downarrow 0
	\end{align*}
    by the Archimedean property. Thus, $A_n x \oto Px$ in $E$ and thus, in $E^u$.
\end{proof}

We are now in position to prove the main theorem of this section.

\begin{proof}[Proof~of~Theorem~\ref{theorem:individual-ergodic}]
    In what follows we write $M$ for the maximal operator associated with the sequence $(A_n)_{n\in\N}$. Consider the Cesàro means
    \begin{align*}
        A_n \colon E \to E^u, \quad A_n x = \frac{1}{n} \sum_{k = 0}^{n - 1} T^k x
    \end{align*}
    regarded as an element of $E^u$ via the order dense embedding $E \hookrightarrow E^u$. As $E$ is continuously embedded into $E^u$ by Lemma~\ref{lemma:universal-completion-properties}(ii), one has $A_n \in \calL(E; E^u)$ for all $n \in \N$. Note further that $E^u$ is Dedekind complete and that assertion (ii) shows that the Cesàro means are pointwise order bounded in $E^u$. Moreover, the topology generated by the gauge metric on $E^u$ is locally solid and has the $\sigma$-Lebesgue property by Proposition~\ref{proposition:gauge-metric}(iii)~and~(iv). Thus, the General Assumption~\ref{general-assumption:standing} is satisfied and we can consider the maximal operator
    \begin{align*}
    	M \colon E \to E^u_+, \quad M x \coloneqq \sup_{n \in \N} A_n \abs{x}
    \end{align*}
    and employ the abstract Banach principle (see Theorem~\ref{theorem:banach-principle}). Due to assumption~(i) and Lemma~\ref{lemma:dense-class} this yields that $(A_n x)_{n \in \N}$ is order convergent in $E^u$ for each $x \in E$. By Proposition~\ref{proposition:order-limit-operator} there exists an operator $Q \colon E \to E^u$ such that $Qx = \operatorname*{\mathrm{o}\text{--}\lim}_{n\to\infty} A_n x$.

    It remains to show that $Q E\subseteq E$ and that $Q=P$. By mean ergodicity, one has $A_n x \to Px$ in norm for each $x\in E$. By the continuity of the embedding $E\hookrightarrow E^u$, this implies $A_nx\to Px$ with respect to the gauge metric $d_\varphi$. On the other hand, $A_n x \oto Q x$ in $E^u$. Since the topology induced by the gauge metric has the $\sigma$-Lebesgue property, it follows that $A_n x \to Qx$ with respect to $d_\varphi$. By uniqueness of limits this shows $Qx = Px$. In particular, $Qx\in E$ and therefore $QE \subseteq E$. Thus $Q=P$.
    Finally, Lemma~\ref{lemma:uo-vs-o}(ii) shows that $A_n x \uoto Px$ for each $x \in E$.
\end{proof}

\section{An abstract maximal ergodic theorem} \label{section:an-abstract-maximal-ergodic-theorem}

The individual ergodic theorem of Section~\ref{section:an-abstract-individual-ergodic-theorem} reduces the problem of pointwise ergodic convergence to an order boundedness condition for the Cesàro means in the universal completion. In this section, we establish this maximal hypothesis under natural superinvariance assumptions. The proof is based on a lattice-theoretic version of Hopf's maximal inequality due to Garsia's argument. Here, indicator functions are replaced by band projections, while the integral is replaced by a strictly positive order continuous functional. The passage from the resulting level-band estimate to order boundedness is then carried out entirely within the universal completion.

\subsection*{Superinvariant pairs}

Throughout this section, let $E$ be a Dedekind complete vector lattice and let $T$ be a positive operator on $E$. In what follows we write $S_n \coloneqq \sum_{k=0}^{n-1} T^k$ and $A_n \coloneqq \frac{1}{n} S_n$, $n \in \N$. We first discuss the lattice-theoretic form of Hopf's inequality.

\begin{lemma} \label{lemma:hopf-inequality}
    Let $\varphi \in E_+'$ be a super fixed point of $T'$. Consider the operators
    \begin{align*}
        M_N \colon E \to E, \quad M_N x = \max_{1 \leq n \leq N} S_n x, \quad N \in \N.
    \end{align*}
    Let $x \in E$ and let $P_N$ be the band projection onto the band $\set{(M_Nx)^+}^{\perp \perp}$. Then
    \begin{align*}
        \dual{P_Nx, \varphi} \geq 0.
    \end{align*}
\end{lemma}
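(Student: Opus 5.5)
We follow Garsia's argument, with indicator functions replaced by the band projection $P_N$ and the integral replaced by $\varphi$. Throughout, $x \in E$ and $N \in \N$ are fixed. We write $m \coloneqq M_N x$, $m^+ = m \vee 0$, and use $P_N m^+ = m^+$ and $P_N m^- = 0$, since $m^- \perp m^+$. The key pointwise identity is
\begin{align*}
    S_{n+1} x = x + T S_n x \qquad (n \in \N).
\end{align*}
For $1 \leq n \leq N$ one has $S_n x \leq m \leq m^+$, and positivity of $T$ gives $T S_n x \leq T m^+$. Hence $S_{n+1} x \leq x + T m^+$ for $1 \leq n \leq N$. Together with $S_1 x = x \leq x + T m^+$ (using $Tm^+ \geq 0$), taking the maximum over $n = 1, \dots, N$ yields
\begin{align*}
    m \leq \max_{1 \leq n \leq N+1} S_n x \leq x + T m^+ .
\end{align*}
Thus $x \geq m - T m^+$.

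Next we apply the positive operator $P_N$ to this inequality:
\begin{align*}
    P_N x \geq P_N m - P_N T m^+ = m^+ - P_N T m^+ .
\end{align*}
Since $0 \leq P_N \leq I$ and $T m^+ \geq 0$, we have $P_N T m^+ \leq T m^+$. Therefore $P_N x \geq m^+ - T m^+$.

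Pairing with $\varphi \geq 0$ gives
\begin{align*}
    \dual{P_N x, \varphi} \geq \dual{m^+, \varphi} - \dual{m^+, T'\varphi}.
\end{align*}
The super fixed point hypothesis $T'\varphi \leq \varphi$, together with $m^+ \geq 0$, gives $\dual{m^+, T'\varphi} \leq \dual{m^+, \varphi}$. Hence the right-hand side is nonnegative, which proves the claim.

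The main obstacle is the first step: one has to verify that $m \leq x + T m^+$ in the lattice sense, and in particular handle the index shift from $N$ to $N+1$, which is harmless because the right-hand bound covers every $n \leq N+1$. The identity $P_N m = m^+$ also needs care. It follows because $m^+ \in \set{m^+}^{\perp\perp}$, while $m^- \in \set{m^+}^{\perp}$ since $m^+ \perp m^-$. No order continuity of $\varphi$ is needed here.
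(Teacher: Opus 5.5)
Your proof is correct and is essentially the paper's Garsia-type argument: you bound $M_Nx \le x + T(M_Nx)^+$, apply $P_N$ using $P_N M_Nx = (M_Nx)^+$ and $P_N T(M_Nx)^+ \le T(M_Nx)^+$, and then pair with $\varphi$ using $T'\varphi \le \varphi$. The only difference is cosmetic indexing: you write $S_{n+1}x = x + TS_nx$ and handle $S_1x = x \le x + T(M_Nx)^+$ separately, whereas the paper writes $S_nx = x + TS_{n-1}x$ with $S_0 = 0$.
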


\begin{proof}
    Note that for $1 \leq n \leq N$ one has
    \begin{align*}
        S_{n-1} x \leq (M_Nx)^+.
    \end{align*}
    Indeed, $S_{n-1} x \leq M_N x \leq (M_Nx)^+$. Since $T$ is positive,
    \begin{align*}
    S_nx = x + TS_{n-1} x \leq x + T(M_Nx)^+.
    \end{align*}
    Taking the maximum yields
    \begin{align*}
    M_Nx - T(M_Nx)^+ \leq x.
    \end{align*}
    Now, we apply the positive operator $P_N$ to this inequality. Since $(M_N x)^-$ is disjoint to $(M_N x)^+$, we obtain $P_N M_N x = (M_N x)^+$, and $0 \leq P_N \leq I$ yields $P_N T (M_N x)^+ \leq T (M_N x)^+$. Consequently, $P_N x \geq (M_N x)^+ - T (M_N x)^+$ and therefore
    \begin{align*}
    \dual{P_Nx, \varphi} \geq \dual{(M_Nx)^+, \varphi} -  \dual{(M_Nx)^+, T'\varphi} \geq 0,
    \end{align*}
    since $\varphi$ is a super fixed point of $T'$.
\end{proof}

As it turns out, the preceding inequality becomes a maximal estimate once $T$ also has a super invariant fixed point. This motivates the following terminology: Let $T$ be a positive operator on $E$. We call a pair $(e, \varphi)$ a \emph{superinvariant pair} for $T$ if the following assertions hold:
\begin{enumerate}[\upshape (a)]
    \item $e\in E_+$ is a weak unit and a super fixed point of $T$, i.e., $Te\leq e$.
    \item $\varphi \in E'_+$ is a strictly positive, $\sigma$-order continuous functional which is a super fixed point of $T'$, i.e., $T' \varphi \leq \varphi$.
\end{enumerate}

In our abstract setting, the weak unit $e$ will play the role of the constant one function, while the functional $\varphi$ will take the role of integration. The following proposition is the corresponding abstract weak type estimate.

\begin{proposition} \label{proposition:maximal-level-estimate}
    Let $T$ be a positive operator on $E$. Suppose that $T$ admits a superinvariant pair $(e,\varphi)$. Let $P_{N,\lambda}$ be the band projection onto the band generated by
    \begin{align*}
    \sup_{1\leq n\leq N}(A_nx-\lambda e)^+, \quad N \in \N.
    \end{align*}
    Then one has $\lambda \dual{P_{N,\lambda}e, \varphi} \leq \dual{x, \varphi}$ for all $\lambda > 0$ and $x \in E_+$.
\end{proposition}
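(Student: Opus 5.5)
The plan is to apply Lemma~\ref{lemma:hopf-inequality} to the shifted element $y \coloneqq x - \lambda e$ and compare the resulting band with the band generated by $\sup_{1\le n\le N}(A_nx-\lambda e)^+$.

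\textbf{Step 1: the bands coincide.} Since $Te \le e$ and $T$ is positive, $T^k e \le e$ for all $k$, so $S_n e \le n e$. Hence
\begin{align*}
S_n y = S_n x - \lambda S_n e \ge S_n x - n\lambda e = n(A_n x - \lambda e).
\end{align*}
This only gives one inclusion of bands, so it is cleaner to let the Hopf lemma act on the right object. Set $M_N y = \max_{n\le N} S_n y$. Then $(M_N y)^+ \ge \max_{n \le N} n (A_nx-\lambda e)^+$, and
\begin{align*}
\Big\{\sup_{n\le N}(A_nx-\lambda e)^+\Big\}^{\perp\perp} = \Big\{\max_{n\le N} n(A_nx-\lambda e)^+\Big\}^{\perp\perp} \subseteq \{(M_Ny)^+\}^{\perp\perp}.
\end{align*}
The first equality holds because multiplying finitely many components by positive scalars does not change the generated band. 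So the band $B_{N,\lambda}$ of the statement is contained in the band $B_N$ of Lemma~\ref{lemma:hopf-inequality}, and $P_{N,\lambda}\le P_N$.

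\textbf{Step 2: apply Hopf.} Lemma~\ref{lemma:hopf-inequality} applied to $y$ (with $T'\varphi\le\varphi$) gives $\langle P_N x,\varphi\rangle \ge \lambda\langle P_N e,\varphi\rangle$. Since $x\ge 0$ and $P_N\le I$, we get $\langle x,\varphi\rangle \ge \langle P_Nx,\varphi\rangle \ge \lambda \langle P_N e,\varphi\rangle$. Finally, $P_{N,\lambda}\le P_N$ on positive elements, so $\langle P_{N,\lambda}e,\varphi\rangle\le\langle P_Ne,\varphi\rangle$. This yields $\lambda\langle P_{N,\lambda} e,\varphi\rangle\le\langle x,\varphi\rangle$. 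Strict positivity and $\sigma$-order continuity of $\varphi$ are not needed here; they become relevant only for the later passage $N\to\infty$.

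\textbf{Main obstacle.} The delicate point is Step 1, namely the correct direction of the band inclusion. It relies on the inequality $S_n e\le ne$, i.e.\ on the super fixed point property of $e$, which yields $S_n y \ge n(A_nx-\lambda e)$. One must also justify that band projections are monotone with respect to inclusion of bands, so that $B\subseteq B'$ implies $P_B\le P_{B'}$ as positive operators. This holds in the Dedekind complete space $E$ because $P_B = P_B P_{B'}$ and $0 \le P_B \le I$.
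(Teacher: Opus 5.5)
Your proposal is correct and follows the paper's proof essentially step for step: you set $y = x - \lambda e$ and use $S_n e \le n e$ to get $(M_N y)^+ \ge n(A_n x - \lambda e)^+$, which gives the band inclusion and hence $P_{N,\lambda} e \le P_N e$, and you then combine Lemma~\ref{lemma:hopf-inequality} applied to $y$ with $P_N x \le x$. The only blemish is cosmetic: your Step~1 is titled ``the bands coincide'', but you establish (correctly) only the inclusion $B_{N,\lambda} \subseteq B_N$, and that inclusion is all the argument needs.
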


\begin{proof}
    Fix $\lambda > 0$ and $x \in E_+$. Set $y \coloneqq x - \lambda e$. As $e$ is a super fixed point of $T$, one has $S_n e\leq ne$ and thus
    \begin{align*}
    S_n y = S_n x - \lambda S_n e \geq n (A_n x - \lambda e)
    \end{align*}
    for all $n \in \N$. Consequently,
    \begin{align*}
    (M_Ny)^+ \geq (S_ny)^+ \geq n(A_nx-\lambda e)^+ \geq (A_nx-\lambda e)^+
    \end{align*}
    for all $1 \leq n \leq N$. If $B_N \coloneqq \set{(M_Ny)^+}^{\perp \perp}$ and
    \begin{align*}
        C_{N,\lambda}
        \coloneqq
        \bigg(\sup_{1\leq n\leq N} (A_nx - \lambda e)^+\bigg)^{\perp \perp},
    \end{align*}
    then this implies that $C_{N,\lambda}\subseteq B_N$ and therefore
    \begin{align*}
    P_{N,\lambda} e \leq P_{B_N} e
    \end{align*}
    for the associated band projections. Applying Lemma~\ref{lemma:hopf-inequality} to $y$ yields
    \begin{align*}
    0 \leq \dual{P_{B_N}y, \varphi} &= \dual{P_{B_N}x, \varphi} - \lambda \dual{P_{B_N}e, \varphi} \\
    &\leq \dual{x, \varphi} - \lambda \dual{P_{N, \lambda}e, \varphi},
    \end{align*}
    which shows the claim.
\end{proof}

We next pass to the infinite maximal function. For $x\in E_+$ and $\lambda>0$, consider the band
\begin{align*}
C_\lambda(x) \coloneqq \set{(A_nx-\lambda e)^+ : n\in\N}^{\perp \perp},
\end{align*}
and let $Q_\lambda(x)$ be the corresponding band projection. The monotonicity of the finite level bands allows us to pass to the limit.

\begin{lemma} \label{lemma:maximal-level-band}
    Let $T$ be a positive operator on $E$. Suppose that $T$ admits a superinvariant pair $(e,\varphi)$. Then, for every $x\in E_+$ and $\lambda>0$,
    \begin{align*}
    Q_\lambda(x)e
    =
    \sup_{N\in\N}P_{N,\lambda}e
    \end{align*}
    and
    \begin{align} \label{eq:maximal-weak-type}
        \lambda \dual{Q_\lambda(x)e, \varphi} \leq \dual{x, \varphi}.
    \end{align}
\end{lemma}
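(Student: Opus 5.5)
We prove the band identity first and then obtain the weak type estimate \eqref{eq:maximal-weak-type} by passing to the limit in Proposition~\ref{proposition:maximal-level-estimate}.

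\emph{Step 1: the bands increase to $C_\lambda(x)$.} Write $u_N \coloneqq \sup_{1\leq n\leq N}(A_nx-\lambda e)^+$ and $C_{N,\lambda} \coloneqq \set{u_N}^{\perp\perp}$. Since $u_N \leq u_{N+1}$, the bands $C_{N,\lambda}$ increase, and hence $P_{N,\lambda} \leq P_{N+1,\lambda}$.

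A finite supremum of positive elements generates the same band as the finite family itself. Hence $C_{N,\lambda} = \set{(A_nx-\lambda e)^+ : n\leq N}^{\perp\perp}$. It follows that $C_{N,\lambda} \subseteq C_\lambda(x)$ for every $N$, and that $C_\lambda(x)$ is the smallest band containing $\bigcup_N C_{N,\lambda}$.

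\emph{Step 2: the identity $Q_\lambda(x)e = \sup_N P_{N,\lambda}e$.} I would invoke the standard fact for Dedekind complete spaces: if $B_N \uparrow$ are bands and $B$ is the band generated by their union, then $P_{B_N} y \uparrow P_B y$ for every $y \in E_+$.

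If I cannot cite this, I would prove it directly. Set $z \coloneqq \sup_N P_{N,\lambda}e$, which exists since it is bounded by $e$. Then $z \leq Q_\lambda(x)e$ because each $C_{N,\lambda} \subseteq C_\lambda(x)$. For the reverse inequality, put $w \coloneqq Q_\lambda(x)e - z \geq 0$, which lies in $C_\lambda(x)$.
\begin{itemize}
\item For each $N$ we have $w \leq Q_\lambda(x)e - P_{N,\lambda}e$.
\item Since $P_{N,\lambda} \leq Q_\lambda(x)$, the right-hand side equals $(Q_\lambda(x)-P_{N,\lambda})e$, which lies in $C_{N,\lambda}^\perp$.
\end{itemize}
Thus $w \perp C_{N,\lambda}$ for all $N$, so $w$ lies in $\big(\bigcup_N C_{N,\lambda}\big)^\perp = C_\lambda(x)^\perp$. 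Since also $w \in C_\lambda(x)$, we get $w = 0$.

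\emph{Step 3: the weak type estimate.} The sequence $P_{N,\lambda}e$ increases to $Q_\lambda(x)e$. Hence $Q_\lambda(x)e - P_{N,\lambda}e \downarrow 0$, and $\sigma$-order continuity of $\varphi$ gives $\dual{P_{N,\lambda}e,\varphi} \to \dual{Q_\lambda(x)e,\varphi}$. Proposition~\ref{proposition:maximal-level-estimate} gives $\lambda\dual{P_{N,\lambda}e,\varphi} \leq \dual{x,\varphi}$ for every $N$, and letting $N\to\infty$ yields \eqref{eq:maximal-weak-type}.

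The main obstacle is Step 2, namely checking the band-projection monotone convergence carefully via the disjointness argument. The remaining steps are routine.
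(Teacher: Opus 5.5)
Your proof is correct and follows the paper's argument. The level bands $C_{N,\lambda}$ increase and generate $C_\lambda(x)$, monotone continuity of band projections gives $P_{N,\lambda}e \uparrow Q_\lambda(x)e$, and $\sigma$-order continuity of $\varphi$ carries the estimate of Proposition~\ref{proposition:maximal-level-estimate} to the limit. The only difference is that you verify the monotone continuity of band projections directly via the disjointness argument, whereas the paper simply cites it.
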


\begin{proof}
    The bands $C_{n, \lambda}$ form an increasing sequence whose union generates $C_\lambda(x)$. Hence, by monotone continuity of band projections, one has
    \begin{align*}
        P_{N,\lambda}e\uparrow Q_\lambda(x)e.
    \end{align*}
    Since $\varphi$ is $\sigma$-order continuous, Proposition~\ref{proposition:maximal-level-estimate} yields
    \begin{align*}
        \lambda \dual{Q_\lambda(x)e, \varphi} =\sup_{N\in\N} \lambda \dual{P_{N,\lambda}e, \varphi} \leq \dual{x, \varphi},
    \end{align*}
    which proves the claim.
\end{proof}

The weak type estimate now implies that the Cesàro orbit is order bounded in the universal completion. The following lemma is the lattice-theoretic substitute for the classical assertion that a measurable function whose distribution function tends to zero at infinity is finite almost everywhere.

\begin{lemma} \label{lemma:lateral-boundedness}
    Let $E$ be a Dedekind complete vector lattice with a weak unit $e \in E$, and let $A \subseteq E_+$. Let $Q_m$, $m \in \N$ denote the band projection onto the band
    \begin{align*}
        B_m \coloneqq \set{(a-me)^+ : a\in A}^{\perp \perp}.
    \end{align*}
    If $Q_m e \downarrow 0$, then $A$ is order bounded in $E^u$.
\end{lemma}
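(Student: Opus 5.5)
We will construct an explicit upper bound for $A$ in $E^u$ by laterally summing pieces living on the ``layers'' where the elements of $A$ exceed $m e$ but not $(m+1)e$. Since $E$ is Dedekind complete, the bands $B_m$ are projection bands, and the plan is to work inside $E^u$, which is universally complete and contains $E$ as an order dense ideal. The bands of $E$ and of $E^u$ correspond via $B \mapsto B^{\perp\perp}$ computed in $E^u$, so the projections $Q_m$ extend to band projections on $E^u$. We use the same letters for the extensions.

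\textbf{Step 1: monotonicity and the limit band.} Since $(a-(m+1)e)^+ \le (a-me)^+$, we have $B_{m+1} \subseteq B_m$, so $(Q_m)$ is a decreasing sequence of band projections. Let $Q_0 \coloneqq I$ and let $R_m \coloneqq Q_{m-1} - Q_m$ for $m \ge 1$; these are mutually disjoint band projections. Since $e$ is a weak unit of $E^u$ and $Q_m e \downarrow 0$, the band $\bigcap_m B_m$ (in $E^u$) is trivial. It follows that $\sum_m R_m = I$ in the sense of order, i.e. the bands $R_m E^u$ are pairwise disjoint and together generate all of $E^u$.

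\textbf{Step 2: bound on each layer.} Fix $a \in A$ and $m \ge 1$. On the complement of $B_m$ we have $(I - Q_m)(a-me)^+ = 0$, since $(a-me)^+ \in B_m$. Hence $(I-Q_m)a \le m\,(I-Q_m)e$, using that band projections are lattice homomorphisms. Applying $R_m \le I - Q_m$ (whose range lies in that of $I-Q_m$) gives $R_m a \le m R_m e$.

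\textbf{Step 3: lateral summation.} The elements $m R_m e$, $m \ge 1$, are pairwise disjoint and positive. By lateral completeness of $E^u$, the element $u \coloneqq \sup_m m R_m e$ exists in $E^u$. For $a \in A$, each $R_m a \le m R_m e \le u$. Since $a \ge 0$ and $a = \sup_m \sum_{k\le m} R_k a$, with the finite sums being disjoint sums equal to $\sup_{k\le m} R_k a$, we obtain $a \le u$. This last equality uses Step 1: $\sum_{k\le m} R_k = I - Q_m$, and $(I-Q_m)a \uparrow a$ because the $Q_m$ decrease to $0$ strongly. Hence $A \subseteq [0,u]_{E^u}$.

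The main obstacle is Step 1: deducing from $Q_m e \downarrow 0$ that $Q_m x \downarrow 0$ for all $x \in E^u_+$, equivalently that $\bigcap_m B_m = \{0\}$. For this we will argue as follows. Let $Q_\infty \coloneqq \inf_m Q_m$, which is the band projection onto $\bigcap_m B_m$. Then $Q_\infty e \le Q_m e$ for all $m$, so $Q_\infty e = 0$. Since $e$ is a weak unit, every nonzero band contains an element $0 < z$ with $z \wedge e > 0$ and $z \wedge e$ in that band, which forces $Q_\infty = 0$. Finally, band projections on a Dedekind complete space satisfy $Q_m x \downarrow Q_\infty x$, which gives $Q_m x \downarrow 0$.
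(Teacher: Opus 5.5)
Your proposal is correct and follows essentially the same route as the paper. You pass the decreasing band projections $Q_m$ to $E^u$, and you show $\bigcap_m B_m=\{0\}$ from $Q_\infty e=0$ together with the weak unit property. You bound each layer by $R_m a\le mR_m e$ with $R_m=Q_{m-1}-Q_m$, assemble $u=\sup_m mR_m e$ by lateral completeness, and conclude from $(I-Q_m)a\uparrow a$. The only differences are cosmetic: you test $z\wedge e$ instead of $x\wedge ne$, and you bound $(I-Q_m)a$ by $\sup_{k\le m}R_k a$ rather than by $\sum_{k\le m}kR_k e$.
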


\begin{proof}
    By the ideal property of $E$ in $E^u$ (cf.\ Lemma~\ref{lemma:universal-completion-properties}(ii)), all bands and band projections generated by subsets of $E$ may be computed in $E^u$ without changing their action on $E$. We may therefore regard the sequence $(Q_m)_{m\in\N}$ as a decreasing sequence of band projections on $E^u$.

    Let $B_m$ denote the band generated by
    \begin{align*}
    \set{(a-me)^+ : a\in A}.
    \end{align*}
    Then $(B_m)_{m\in\N}$ is decreasing. Let $B_\infty \coloneqq \bigcap_{m\in\N} B_m$ and let $Q_\infty$ be the corresponding band projection (cf.\ \cite[Theorem~1.2.10]{MeyerNieberg1991}). By monotonicity of band projections,
    \begin{align*}
    Q_\infty e =\inf_{m \in \N} Q_me =0.
    \end{align*}
    Let $0 \leq x \in B_\infty$. Since $e$ is a weak unit in $E^u$, one has $x \wedge ne \uparrow x$ as $n \to \infty$ and $0 \leq x \wedge ne = Q_\infty(x \wedge ne) \leq n Q_\infty e = 0$ for all $n \in \N$. Thus, $B_\infty=\set{0}$.

    Now set $D_m\coloneqq Q_{m-1}-Q_m$, $m \in \N$ and $Q_0 = I$. The $D_m$ are pairwise disjoint band projections. For $a \in A$, one has $D_m(a-me)^+=0$ and consequently
    \begin{align*}
    D_m a =D_m (a - me) + m D_m e \leq m D_m e.
    \end{align*}
    Moreover, the family $(m D_m e)_{m \in \N}$ is pairwise disjoint and positive. As $E^u$ is laterally complete, the element $u \coloneqq \sup_{m \in \N} m D_m e$ exists in $E^u$. Thus, for each $M \in \N$ one has
    \begin{align*}
    (I - Q_M) a =\sum_{m = 1}^M D_m a \leq \sum_{m = 1}^M m D_m e \leq u.
    \end{align*}
    As $Q_M a \downarrow Q_\infty a = 0$, it follows that
    \begin{align*}
    a= \sup_{M \in \N} (I - Q_M)a \leq u.
    \end{align*}
    Thus $A$ is order bounded in $E^u$.
\end{proof}

\subsection*{Maximal ergodic theorem}

We can now combine the weak type estimate with the preceding lemma. This gives the maximal hypothesis required in the abstract Banach principle. In the statement and proof we write
\begin{align*}
    M x \coloneqq \sup_{n \in \N} A_n \abs{x} \in E^u
\end{align*}
for the \emph{maximal operator} associated with the sequence $(A_n)_{n\in\N}$; by assertion~(i) below this supremum exists in $E^u$ for each $x \in E$. We start by proving a preparatory lemma.

\begin{lemma} \label{lemma:phi-decay-to-order-limit}
    Let $E$ be a Dedekind complete vector lattice and let $\varphi \in E'_+$ be strictly positive. If $(y_m)_{m \in \N}$ is a decreasing sequence in $E_+$ such that $\dual{y_m, \varphi} \to 0$, then $y_m \downarrow 0$ as $m \to \infty$.
\end{lemma}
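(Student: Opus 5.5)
Since $E$ is Dedekind complete and $(y_m)_{m \in \N}$ is a decreasing sequence in $E_+$, it is bounded below by $0$, so the infimum $y \coloneqq \inf_{m \in \N} y_m$ exists in $E$ and satisfies $y \geq 0$. To show $y_m \downarrow 0$ it therefore suffices to prove that $y = 0$.

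We compare $y$ with each $y_m$ using only positivity of $\varphi$. Since $0 \leq y \leq y_m$ for every $m \in \N$, we get
\begin{align*}
    0 \leq \dual{y, \varphi} \leq \dual{y_m, \varphi} \qquad \text{for all } m \in \N.
\end{align*}
Letting $m \to \infty$ and using the hypothesis $\dual{y_m, \varphi} \to 0$ gives $\dual{y, \varphi} = 0$. Strict positivity of $\varphi$ now forces $y = 0$: otherwise $0 < y$ would give $\dual{y, \varphi} > 0$. Hence $\inf_{m \in \N} y_m = 0$, and together with monotonicity this is exactly $y_m \downarrow 0$.

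There is no real obstacle. Note in particular that no order continuity of $\varphi$ is needed, because the argument never passes $\varphi$ through the infimum; it only uses the pointwise bound $y \leq y_m$. The single point to keep in mind is that the infimum exists in $E$ itself, and this is guaranteed by Dedekind completeness.
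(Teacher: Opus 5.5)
Your proposal is correct and follows essentially the same argument as the paper: the infimum $y$ exists by Dedekind completeness, positivity of $\varphi$ gives $0 \leq \dual{y, \varphi} \leq \dual{y_m, \varphi} \to 0$, and strict positivity then forces $y = 0$. Your side remark is also right: the argument never uses order continuity of $\varphi$.
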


\begin{proof}
    Since $(y_m)_{m \in \N}$ is decreasing and bounded below by $0$, the infimum $y \coloneqq \inf_{m \in \N} y_m$ exists in $E$ by Dedekind completeness. As $0 \leq y \leq y_m$ for each $m \in \N$, positivity of $\varphi$ yields
    \begin{align*}
        0 \leq \dual{y, \varphi} \leq \dual{y_m, \varphi} \to 0.
    \end{align*}
    Thus, $\dual{y, \varphi} = 0$ and the strict positivity of $\varphi$ implies $y = 0$.
\end{proof}

Combined with Lemma~\ref{lemma:lateral-boundedness}, this shows that any quantitative decay $\varphi(Q_m e) \to 0$ of a strictly positive functional along the level bands already forces order boundedness in $E^u$. We are now in position to prove an abstract maximal ergodic theorem.

\begin{theorem}[Maximal ergodic theorem] \label{theorem:maximal-ergodic}
    Let $E$ be a Dedekind complete vector lattice and let $T$ be a positive operator on $E$ which admits a superinvariant pair $(e,\varphi)$. Then the following assertions hold:
    \begin{enumerate}[\upshape (i)]
        \item For each $x \in E$, the set $\set{A_n \abs{x} : n \in \N}$ is order bounded in $E^u$.
        \item For each $x \in E$ and $\lambda > 0$, let $q_\lambda(x)$ denote the projection of $e$ onto the band generated by $(M \abs{x} - \lambda e)^+$ in $E^u$. Then $q_\lambda(x)\in E$ and
        \begin{align} \label{eq:maximal-ergodic-weak-type}
        	\lambda\,\varphi(q_\lambda(x))
        	\leq\varphi(\abs{x}).
        \end{align}
    \end{enumerate}
\end{theorem}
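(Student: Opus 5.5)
For (i), fix $x \in E$ and put $A \coloneqq \set{A_n \abs{x} : n \in \N} \subseteq E_+$. The plan is to apply Lemma~\ref{lemma:lateral-boundedness}, so we must show that $Q_m e \downarrow 0$. Here $Q_m$ is the band projection onto $B_m = \set{(A_n\abs{x} - me)^+ : n \in \N}^{\perp\perp}$, which is exactly $Q_m(\abs{x})$ in the notation of Lemma~\ref{lemma:maximal-level-band}. The bands $B_m$ decrease in $m$, since $(a-(m+1)e)^+ \leq (a-me)^+$. Hence $(Q_m e)_{m}$ is a decreasing sequence in $[0,e] \subseteq E_+$. By \eqref{eq:maximal-weak-type} with $\lambda = m$ we get $\varphi(Q_m e) \leq \varphi(\abs{x})/m \to 0$. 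Lemma~\ref{lemma:phi-decay-to-order-limit}, which uses that $E$ is Dedekind complete and $\varphi$ strictly positive, then gives $Q_m e \downarrow 0$. Lemma~\ref{lemma:lateral-boundedness} yields that $A$ is order bounded in $E^u$. In particular $M\abs{x} = \sup_n A_n\abs{x}$ exists in $E^u$ because $E^u$ is Dedekind complete.

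For (ii), fix $\lambda > 0$ and let $R_\lambda$ be the band projection in $E^u$ onto $\set{(M\abs{x} - \lambda e)^+}^{\perp\perp}$, so that $q_\lambda(x) = R_\lambda e$. Since $0 \leq R_\lambda e \leq e$ and $E$ is an ideal in $E^u$ (Lemma~\ref{lemma:universal-completion-properties}(ii)), we get $q_\lambda(x) \in E$.

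The key step is to compare $R_\lambda$ with the projection $Q_\lambda(\abs{x})$ of Lemma~\ref{lemma:maximal-level-band}, computed in $E^u$ as in the proof of Lemma~\ref{lemma:lateral-boundedness}. The aim is to show that the two bands coincide. In $E^u$ the map $y \mapsto (y - \lambda e)^+$ is order continuous, so
\begin{align*}
(M\abs{x} - \lambda e)^+ = \sup_{n\in\N} (A_n\abs{x} - \lambda e)^+ .
\end{align*}
The band generated by the supremum of a family of positive elements equals the band generated by the family itself. Therefore $R_\lambda = Q_\lambda(\abs{x})$, and \eqref{eq:maximal-weak-type} with $x$ replaced by $\abs{x}$ gives \eqref{eq:maximal-ergodic-weak-type}.

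The main obstacle is bookkeeping rather than analysis. The band projections in Lemma~\ref{lemma:maximal-level-band} live in $E$, whereas $R_\lambda$ lives in $E^u$, so one must check that they act identically on $e$. Because $E$ is an order dense ideal in $E^u$, the band generated in $E^u$ by a subset $S \subseteq E$ intersects $E$ in the band generated in $E$. Since $E$ is Dedekind complete, $P_{S^{\perp\perp}}e = \sup\set{y \wedge e : y \in E_+ \cap S^{\perp\perp}}$, and this supremum is the same in $E$ and in $E^u$ by regularity. So the two projections of $e$ agree, and the estimate transfers.
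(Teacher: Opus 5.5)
Your proposal is correct and follows the paper's proof essentially step for step. Part (i) combines the weak-type estimate of Lemma~\ref{lemma:maximal-level-band} at $\lambda = m$ with Lemma~\ref{lemma:phi-decay-to-order-limit} and Lemma~\ref{lemma:lateral-boundedness}, and part (ii) identifies the band generated by $(M\abs{x}-\lambda e)^+$ with the band generated by the $(A_n\abs{x}-\lambda e)^+$; you additionally make explicit some bookkeeping the paper leaves implicit, namely that $q_\lambda(x)\in E$, that band projections of $e$ computed in $E$ and in $E^u$ agree, and that the lemmas are applied to $\abs{x}$ rather than $x$.
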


\begin{proof}
    (i): Fix $x\in E$ and set $a_n\coloneqq A_n\abs{x}$. For $\lambda>0$, consider the band $C_\lambda \coloneqq \set{(a_n-\lambda e)^+:n\in\N}^{\perp \perp}$ and let $Q_\lambda$ be the corresponding band projection. By Lemma~\ref{lemma:maximal-level-band}, one has
    \begin{align*}
        m \dual{Q_m(x) e, \varphi} \leq \dual{\abs{x}, \varphi} \qquad (m \in \N).
    \end{align*}
    The sequence $(Q_m(x) e)_{m \in \N}$ is decreasing. Set $y \coloneqq \inf_{m\in\N}Q_m(x) e$. Then it follows from Lemma~\ref{lemma:phi-decay-to-order-limit} that $y = 0$. Lemma~\ref{lemma:lateral-boundedness}, applied to the set $A=\set{a_n : n\in\N}$ now shows that $(a_n)_{n\in\N}$ is order bounded in $E^u$. Hence, $Mx$ exists in $E^u$, and since $T$ is positive, one has
    \begin{align*}
        \abs{A_n x} \leq A_n \abs{x} \leq Mx.
    \end{align*}
    Thus, $\set{A_n \abs{x} : n \in \N}$ is order bounded in $E^u$.

    (ii): We show that $Q_\lambda(x)e$ coincides with the projection of $e$ onto the band generated by $(Mx-\lambda e)^+$ in $E^u$. Since $(a_n-\lambda e)^+ \leq (Mx-\lambda e)^+$ for each $n \in \N$, the band generated by the former elements is contained in the band generated by $(Mx - \lambda e)^+$. Conversely, one has
    \begin{align*}
    Mx-\lambda e
    =\sup_{n\in\N}(a_n-\lambda e)
    \leq\sup_{n\in\N}(a_n-\lambda e)^+,
    \end{align*}
    and the supremum on the right is contained in the band generated by the $(a_n-\lambda e)^+$. Hence $(Mx-\lambda e)^+$ belongs to the same band. So the two bands coincide, and \eqref{eq:maximal-ergodic-weak-type} follows from Lemma~\ref{lemma:maximal-level-band}.
\end{proof}

\begin{remark} \label{remark:maximal-ergodic-classical}
    Theorem~\ref{theorem:maximal-ergodic} is a lattice-theoretic version of Hopf's maximal ergodic theorem (see \cite[Theorem~11.1]{Eisner2015}). In the classical setting $E=L^1(\Omega)$, with $e=\one$ and
    \begin{align*}
    \varphi(f)=\int_\Omega f \,\ud\mu,
    \end{align*}
    the band projection associated with a measurable set $B$ is given by the multiplication by its indicator function. Consequently,
    \begin{align*}
    \varphi(q_\lambda(f)) = \mu (\set{M f > \lambda}),
    \end{align*}
    and \eqref{eq:maximal-ergodic-weak-type} becomes the usual weak type $(1,1)$-estimate
    \begin{align*}
    \lambda \mu (\set{M f > \lambda})
    \leq \norm{f}_1.
    \end{align*}
    The proof above, however, does not use a measure representation. The band projection $q_\lambda(f)$ is the intrinsic lattice-theoretic substitute for the level set $\set{Mf > \lambda}$.
\end{remark}

\begin{remark} \label{remark:calderon}
	A superinvariant pair encodes an abstract Dunford--Schwartz structure. Indeed, $\norm{z}_\varphi \coloneqq \varphi(\abs{z})$ defines a lattice norm on $E$ which is additive on $E_+$, so the completion of $(E, \norm{\, \cdot \,}_\varphi)$ is an AL-space, while $(E_e, \norm{\, \cdot \,}_e)$ is an AM-space with unit $e$; the pair conditions assert precisely that $T$ acts contractively on both. In the Kakutani representations, $T$ therefore extends to a positive operator which contracts an $L^1$-space and an $L^\infty$-space simultaneously, and Theorem~\ref{theorem:maximal-ergodic} may be viewed as the intrinsic, representation-free formulation of Hopf's theorem for this couple.
\end{remark}

The main consequence of the preceding individual ergodic theorem is immediate. The maximal theorem supplies precisely the order boundedness assumption appearing there.

\begin{corollary} \label{corollary:individual-from-maximal}
    Let $E$ be an order continuous Banach lattice with a weak unit $e\in E_+$. Let $T$ be a positive, power-bounded, mean ergodic operator on $E$ with mean ergodic projection $P$. Suppose that $(e,\varphi)$ is a superinvariant pair for $T$ with $\varphi\in E'_+$, so that in particular $\varphi$ is $\sigma$-order continuous. Then $A_nx\uoto Px$ for each $x\in E$.
\end{corollary}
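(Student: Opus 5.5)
The corollary follows by checking the two hypotheses of Theorem~\ref{theorem:individual-ergodic} and then quoting its conclusion. The plan is to feed Theorem~\ref{theorem:maximal-ergodic} into Theorem~\ref{theorem:individual-ergodic}.

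\emph{Hypothesis (i).} This is part~(a) of the definition of a superinvariant pair, which says $Te \leq e$.

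\emph{Hypothesis (ii).} We need to know that, for each $x \in E$, the sequence $(A_n \abs{x})_{n \in \N}$ is order bounded in $E^u$. This is what Theorem~\ref{theorem:maximal-ergodic}(i) provides, but that theorem requires $E$ to be Dedekind complete. An order continuous Banach lattice is Dedekind complete: every increasing net bounded above is norm Cauchy by order continuity, and its norm limit is its supremum. So Theorem~\ref{theorem:maximal-ergodic} applies to $E$ and the superinvariant pair $(e,\varphi)$. The positivity of $T$ and the $\sigma$-order continuity and strict positivity of $\varphi$ are all built into the pair hypotheses, so nothing further needs checking there.

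\emph{Conclusion.} With both hypotheses in place, Theorem~\ref{theorem:individual-ergodic} yields $A_n x \oto Px$ in $E^u$. Lemma~\ref{lemma:uo-vs-o}(ii) then gives $A_n x \uoto Px$ in $E$, which is the claim.

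\emph{Main obstacle: matching hypotheses.} The argument itself is short; the care is in matching the assumptions of the two theorems.
\begin{enumerate}[(1)]
\item Theorem~\ref{theorem:individual-ergodic} builds the gauge metric from some strictly positive $\varphi$ supplied by Lemma~\ref{lemma:strictly-positive-functional}. In our setting we may either use the given $\varphi$ for this purpose or keep the metric's functional separate; nothing in Theorem~\ref{theorem:individual-ergodic} depends on that choice.
\item Proposition~\ref{proposition:gauge-metric} assumes Dedekind $\sigma$-completeness, which holds by the Dedekind completeness established above.
\item The universal completion $E^u$ in Theorem~\ref{theorem:maximal-ergodic}(i) must be the same space as in Theorem~\ref{theorem:individual-ergodic}. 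This holds by the uniqueness in Proposition~\ref{proposition:existence-universal-completion}.
\end{enumerate}
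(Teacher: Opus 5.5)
Your proposal is correct and follows the paper's proof: Theorem~\ref{theorem:maximal-ergodic}(i) supplies the order boundedness hypothesis~(ii) of Theorem~\ref{theorem:individual-ergodic}, and the conclusion is then read off from that theorem. Your explicit checks fill in details the paper leaves implicit: that $Te \leq e$ comes from the pair, and that order continuity of the norm gives Dedekind completeness (needed for Theorem~\ref{theorem:maximal-ergodic}). The final appeal to Lemma~\ref{lemma:uo-vs-o}(ii) is harmless but redundant, since Theorem~\ref{theorem:individual-ergodic} already states uo-convergence in $E$.
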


\begin{proof}
    The sequence $(A_n\abs{x})_{n\in\N}$ is order bounded in $E^u$ for each $x \in E$ by Theorem~\ref{theorem:maximal-ergodic}. Thus, the maximal hypothesis of Theorem~\ref{theorem:individual-ergodic} is satisfied and the assertion follows.
\end{proof}

\begin{remark}
    The preceding corollary separates the two ingredients of the individual ergodic theorem. The Banach principle of Section~\ref{section:an-abstract-banach-principle} provides the abstract mechanism which upgrades convergence on a dense class to convergence on the whole space, while Theorem~\ref{theorem:maximal-ergodic} supplies the required maximal hypothesis.
\end{remark}

\begin{remark}
    The proof of Theorem~\ref{theorem:maximal-ergodic} also explains the role of the universal completion. The weak type estimate only yields that the bands $Q_me$ vanish as $m\to\infty$. This does not by itself produce an order bound in $E$, since the Cesàro means need not be order bounded there. Lateral completeness of $E^u$ allows the disjoint pieces $m(Q_{m-1}-Q_m)e$ to be assembled into a single dominating element. Thus the universal completion is not merely a convenient ambient space: it is precisely the lattice-theoretic substitute for the passage from finiteness of a maximal function almost everywhere to an actual dominating function.
\end{remark}

\subsection*{Superinvariant pairs from Perron--Frobenius theory}

We close this section with a brief discussion of the role that Perron-Frobenius theory might play when it comes to the existence of superinvariant pairs. In particular, we show that superinvariant pairs are not exotic. Recall that an operator $T$ on a Banach lattice $E$ is called \emph{irreducible} if $\set{0}$ and $E$ are the only closed $T$-invariant ideals of $E$.

\begin{proposition} \label{proposition:irreducible}
	Let $T$ be an irreducible operator on a Banach lattice $E$. Then the following assertions hold:
	\begin{enumerate}[\upshape (i)]
    	\item Every non-zero super fixed point of $T$ is a quasi-interior point of $E$.
		\item Every non-zero super fixed point of $T'$ is strictly positive.
	\end{enumerate}
\end{proposition}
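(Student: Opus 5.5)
The plan is to build, from each super fixed point, a closed $T$-invariant ideal and then let irreducibility force that ideal to be all of $E$ or $\{0\}$.

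For (i), let $0 \neq e \in E$ with $Te \leq e$. I first check that $e \geq 0$; this is implicit in the paper's definition, since super fixed points are taken in $E_+$. Consider the principal ideal $E_e$ and its norm closure $J \coloneqq \overline{E_e}$. The closure of an ideal in a Banach lattice is again an ideal, so $J$ is a closed ideal.

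Next I show $J$ is $T$-invariant. Take $x \in E_e$ with $\abs{x} \leq \lambda e$. Positivity of $T$ (irreducible operators are positive in this framework) gives
\begin{align*}
    \abs{Tx} \leq T\abs{x} \leq \lambda Te \leq \lambda e,
\end{align*}
so $Tx \in E_e$. Continuity of $T$ then yields $TJ \subseteq J$. Since $e \neq 0$, we have $J \neq \{0\}$, and irreducibility forces $J = E$. That is exactly the statement that $E_e$ is norm dense, i.e., $e$ is a quasi-interior point.

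For (ii), let $0 \neq \varphi \in E'$ with $T'\varphi \leq \varphi$, again with $\varphi \geq 0$. The natural object is the null ideal $N_\varphi$, which is an ideal because $\varphi$ is positive. It is norm closed because $x \mapsto \dual{\abs{x}, \varphi}$ is continuous.

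To see invariance, take $x \in N_\varphi$. Then
\begin{align*}
    0 \leq \dual{\abs{Tx}, \varphi} \leq \dual{T\abs{x}, \varphi} = \dual{\abs{x}, T'\varphi} \leq \dual{\abs{x}, \varphi} = 0.
\end{align*}
Here $T'\varphi \leq \varphi$ is used tested against $\abs{x} \geq 0$. So $TN_\varphi \subseteq N_\varphi$. Irreducibility gives $N_\varphi = \{0\}$ or $N_\varphi = E$. The second option would mean $\varphi(\abs{x}) = 0$ for all $x$, hence $\varphi = 0$, a contradiction. Therefore $N_\varphi = \{0\}$, which is exactly strict positivity of $\varphi$.

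The main point needing care is the standing conventions rather than any estimate. I need that irreducible operators are assumed positive, and that super fixed points are positive elements. If positivity of the super fixed point is not built in, I would add it as a hypothesis, or note that the paper's definition places super fixed points in $E_+$ (and in $E'_+$ for $T'$). With these conventions in place, both parts reduce to the invariant-ideal argument above.
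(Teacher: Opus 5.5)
Your proposal is correct and follows essentially the same route as the paper: for (i) you show that the closure of the principal ideal $E_e$ is a closed $T$-invariant ideal, and for (ii) you do the same for the null ideal $N_\varphi$, so irreducibility forces $\overline{E_e} = E$ and $N_\varphi = \set{0}$ respectively. You are also right that both arguments tacitly use positivity of $T$ (through $\abs{Tx} \leq T\abs{x}$), which the paper's definition of irreducibility does not state explicitly.
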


\begin{proof}
	(i): Let $0 < e \in E_+$ be a super fixed point of $T$. Then the closure $\overline{E_e}$ is a closed ideal of $E$. Moreover, for each $y \in E_e$ there exists $c > 0$ such that $\abs{y} \leq c e$. Thus, $\abs{Ty} \leq c Te \leq c e$, i.e., $T E_e \subseteq E_e$. As $e \ne 0$, the irreducibility of $T$ yields $\overline{E_e} = E$.

    (ii): Let $0 < \varphi \in E_+'$ be a super fixed point of $T'$. Consider the null ideal $N_\varphi \coloneqq \set{x \in E : \dual{\abs{x}, \varphi} = 0}$. Then $N_\varphi$ is closed and $T$-invariant, since
    \begin{align*}
        \dual{\abs{Tx}, \varphi} \leq \dual{\abs{x}, T' \varphi} \leq \dual{\abs{x}, \varphi}
    \end{align*}
    for each $x \in E$. As $\varphi \neq 0$, one has $N_\varphi \neq E$, so the irreducibility of $T$ yields $N_\varphi = \set{0}$, which is the claim.
\end{proof}

\begin{proposition} \label{proposition:existence}
	Let $T$ be power-bounded and let $e \in E_+ \setminus \set{0}$ satisfy $Te = e$. Then there exists a positive $\varphi \in E'$ with $T' \varphi = \varphi$ and $\varphi(e) > 0$.
\end{proposition}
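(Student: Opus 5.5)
We prove the statement by an invariant-mean argument on the dual, i.e.\ a Banach limit of Cesàro means of the adjoint orbit. Here $E$ is a Banach lattice and $T$ is a positive, power-bounded operator with $Te = e$, $e > 0$. Since $e \neq 0$ is positive, there is some $\psi \in E'_+$ with $\psi(e) > 0$. For instance, take the functional given by Hahn--Banach for $e$ and replace it by its modulus, which is positive with $\abs{\psi}(e) \geq \abs{\psi(e)} > 0$.

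Consider the Cesàro means $\psi_n \coloneqq A_n' \psi = \frac1n \sum_{k=0}^{n-1} (T')^k \psi$.
\begin{itemize}
\item Since $T$ is positive, each $\psi_n$ is positive.
\item By power-boundedness, $\norm{\psi_n} \leq \sup_k \norm{T^k}\,\norm{\psi}$, so the sequence is norm bounded.
\item Because $Te = e$, one has $\psi_n(e) = \frac1n\sum_{k}\psi(T^k e) = \psi(e)$ for all $n$.
\end{itemize}
By Banach--Alaoglu, the bounded net $(\psi_n)$ has a weak$^*$ cluster point $\varphi \in E'$. The positive cone $E'_+$ is weak$^*$ closed, so $\varphi \geq 0$. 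Evaluation at $e$ is weak$^*$ continuous, so $\varphi(e) = \psi(e) > 0$.

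The key step is the invariance $T'\varphi = \varphi$. We have $T'\psi_n - \psi_n = \frac1n\big((T')^n\psi - \psi\big)$, whose norm is at most $\frac{1}{n}(\sup_k\norm{T^k}+1)\norm{\psi} \to 0$. Fix $x \in E$ and $\varepsilon>0$. Choose $N$ with $\norm{T'\psi_n-\psi_n}\,\norm{x}<\varepsilon$ for $n\geq N$. Since $\varphi$ is a cluster point, there is $n \geq N$ with $\abs{(\psi_n-\varphi)(x)}<\varepsilon$ and $\abs{(\psi_n-\varphi)(Tx)}<\varepsilon$. Then
\begin{align*}
\abs{(T'\varphi-\varphi)(x)} \leq \abs{\varphi(Tx)-\psi_n(Tx)} + \abs{(T'\psi_n-\psi_n)(x)} + \abs{\psi_n(x)-\varphi(x)} < 3\varepsilon .
\end{align*}
Hence $T'\varphi=\varphi$.

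The main subtlety is that one cannot extract a weak$^*$ convergent subsequence without separability. For that reason we work with cluster points of the net, or equivalently with a Banach limit $\varphi(x) \coloneqq \mathrm{LIM}_n\, \psi_n(x)$. With a Banach limit, positivity, the equality $\varphi(e)=\psi(e)$ and invariance all follow directly from the estimates above, so no compactness argument is needed at all.
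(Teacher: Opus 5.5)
Your proof is correct and follows the paper's argument essentially verbatim: the paper also takes a Hahn--Banach functional for $e$, passes to its modulus $\psi$, takes a weak$^\ast$ cluster point $\varphi$ of the Cesàro means $A_n'\psi$, and obtains positivity, $\varphi(e)=\psi(e)>0$ and $T'\varphi=\varphi$ in the same way. The only differences are cosmetic: you spell out the invariance step with an explicit $\varepsilon$-estimate where the paper cites weak$^\ast$-continuity of $T'$, and you add an equally valid Banach-limit variant.
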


\begin{proof}
	Choose $\psi_0 \in E'$ with $\norm{\psi_0} = 1$ and $\psi_0(e) = \norm{e}$. Then $\psi \coloneqq \abs{\psi_0} \in E'_+$ satisfies $\psi(e) \geq \psi_0(e) > 0$. Consider
    the Cesàro means $A_n \coloneqq \frac{1}{n} \sum_{k = 0}^{n - 1} T^k$ and set $\Phi_n \coloneqq A_n' \psi \in E'_+$ for $n \in \N$. As $T$ is power-bounded, $(\Phi_n)_{n \in \N}$ is norm bounded, so by the Banach--Alaoglu theorem it admits a weak$^\ast$-cluster point $\varphi \in E'$, which is positive as the dual cone is weak$^\ast$-closed. Since $Te = e$ implies $A_n e = e$, one has $\Phi_n(e) = \psi(e)$ for all $n \in \N$, so $\varphi(e) = \psi(e) > 0$. Finally, since $T$ is power bounded, one has
	\begin{align*}
		\big( T' \Phi_n - \Phi_n \big)(x) = \frac{1}{n} \, \psi\big( T^n x - x \big) \to 0 \qquad (x \in E)
	\end{align*}
	as $n \to \infty$. Since $T'$ is weak$^\ast$-continuous, by passing to the cluster point one obtains $T' \varphi = \varphi$.
\end{proof}

Combining Theorem~\ref{theorem:maximal-ergodic}, Theorem~\ref{theorem:individual-ergodic} and the two propositions above, we arrive at an individual ergodic theorem whose hypotheses are entirely spectral-theoretic.

\begin{corollary} \label{corollary:perron-frobenius}
	Let $E$ be an order continuous Banach lattice with a weak unit and let $T$ be a positive, power-bounded, irreducible and mean ergodic operator on $E$ whose mean ergodic projection $P$ is non-zero. Then $A_n x \uoto Px$ in $E$ for each $x \in E$.
\end{corollary}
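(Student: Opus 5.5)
The plan is to construct a superinvariant pair $(e,\varphi)$ for $T$ and then apply Corollary~\ref{corollary:individual-from-maximal}. That corollary already rests on Theorem~\ref{theorem:maximal-ergodic} and Theorem~\ref{theorem:individual-ergodic}. The weak unit given in the hypothesis is only used to know that $E$ has one; the weak unit $e$ of the pair will be built from $P$.

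\textbf{Step 1 (a positive fixed point).} Since $P\neq 0$, choose $x\in E$ with $Px\neq 0$. As a strong limit of the positive operators $A_n$, the projection $P$ is positive, so $P\abs{x}\geq \abs{Px}>0$. Set $e\coloneqq P\abs{x}$. Then $e\in E_+\setminus\set{0}$, and $e\in\fix T$ because the range of the mean ergodic projection is $\fix T$; hence $Te=e$. By Proposition~\ref{proposition:irreducible}(i), $e$ is a quasi-interior point of $E$. In particular it is a weak unit: if $y\wedge e=0$ with $y\geq 0$, then $y\wedge z=0$ for all $z\in E_e$, and by density and continuity of the lattice operations $y\wedge y=0$.

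\textbf{Step 2 (an invariant functional).} Since $T$ is power-bounded and $Te=e$, Proposition~\ref{proposition:existence} gives a positive $\varphi\in E'$ with $T'\varphi=\varphi$ and $\varphi(e)>0$. In particular $\varphi\neq 0$, so Proposition~\ref{proposition:irreducible}(ii) shows that $\varphi$ is strictly positive. Moreover $\varphi$ is $\sigma$-order continuous: if $x_n\downarrow 0$, order continuity of the norm gives $\norm{x_n}\to 0$, hence $\varphi(x_n)\to 0$. Thus $(e,\varphi)$ is a superinvariant pair.

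\textbf{Step 3 (conclusion).} An order continuous Banach lattice is Dedekind complete, so Theorem~\ref{theorem:maximal-ergodic} applies. All hypotheses of Corollary~\ref{corollary:individual-from-maximal} now hold with this pair $(e,\varphi)$. It yields $A_nx\uoto Px$ for every $x\in E$.

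\textbf{Expected obstacle.} The only delicate point is Step~1: producing a \emph{non-zero positive} fixed vector from the assumption $P\neq 0$, which relies on the positivity of $P$. The other potential subtlety, $\sigma$-order continuity of $\varphi$, is automatic here from the order continuity of the norm.
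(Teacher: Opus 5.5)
Your proposal is correct and is essentially the paper's own proof: a non-zero positive fixed vector $e$ obtained from $P \neq 0$, an invariant functional from Proposition~\ref{proposition:existence}, the upgrades to quasi-interior point and strict positivity via Proposition~\ref{proposition:irreducible}, $\sigma$-order continuity from the order continuous norm, and then Corollary~\ref{corollary:individual-from-maximal}. You also check explicitly that the quasi-interior point $e$ is a weak unit, which the paper leaves implicit; in that check, write $y \wedge \abs{z} = 0$ for $z \in E_e$ rather than $y \wedge z = 0$.
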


\begin{proof}
	Clearly, $P$ is positive, and since $P \neq 0$ there exists $z \in E_+$ with $e \coloneqq Pz \ne 0$. As $P E = \fix T$, one has $Te = e$. By Proposition~\ref{proposition:existence} there exists $0 < \varphi \in E'_+$ such that $T' \varphi = \varphi$ and $\varphi(e) > 0$. So, Proposition~\ref{proposition:irreducible} implies that $e$ is a quasi-interior point and that $\varphi$ is strictly positive. Moreover, $\varphi$ is $\sigma$-order continuous by the order continuity of the norm on $E$. Consequently, $(e, \varphi)$ is a superinvariant pair for $T$, and Corollary~\ref{corollary:individual-from-maximal} concludes the proof.
\end{proof}

\section{Applications and relations to the classical theory} \label{section:applications-and-relations-to-the-classical-theory}

The abstract framework developed in the previous section raises the question of how natural its underlying assumptions are. We shall see that they are precisely those satisfied in the classical setting of measurable functions.

\subsection*{The model space of measurable functions} Let $(\Omega, \Sigma, \mu)$ be a $\sigma$-finite measure space and let $E \coloneqq L^0(\Omega)$ be the vector lattice of all equivalence classes of measurable functions. The following results show that General~Assumption~\ref{general-assumption:standing} is quite natural in this concrete setting. We first record the lattice-theoretic properties of $E$.

\begin{proposition} \label{proposition:l0-lattice-properties}
    Let $(\Omega, \Sigma, \mu)$ be a measure space and let $E \coloneqq L^0(\Omega)$. Then the following assertions hold:
    \begin{enumerate}[\upshape (i)]
        \item $E$ is Archimedean.
        \item $E$ is universally complete.
    \end{enumerate}
\end{proposition}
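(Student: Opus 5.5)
The plan is to work with representatives and reduce everything to countable operations, where pointwise arguments are available.

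\emph{Archimedean property.} Suppose $n f \leq g$ a.e.\ for all $n \in \N$. Discarding the countable union of the exceptional null sets, we get a single conull set on which $n f(\omega) \leq g(\omega)$ for every $n$. Since $g(\omega)$ is a finite real number, letting $n \to \infty$ gives $f(\omega) \leq 0$ there, so $f \leq 0$ in $E$. This is routine.

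\emph{Dedekind completeness.} I would proceed in two stages.

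\begin{enumerate}[(1)]
\item Countable families first. If $(f_k)_{k \in \N}$ is bounded above by $g$, the pointwise supremum $\sup_k f_k(\omega)$ is measurable and finite a.e., since it is dominated by $g$. Its class is the supremum in $E$, because every upper bound dominates each $f_k$ off a countable union of null sets. Hence $E$ is Dedekind $\sigma$-complete.
\item Arbitrary families. Let $\calF \subseteq E$ be non-empty and bounded above by $g$. Closing $\calF$ under finite maxima, I may assume $\calF$ is upward directed. Fix a set $\Omega_0$ of finite measure and pass to the bounded quantity $\int_{\Omega_0} \arctan(f - g)\,\ud\mu$. Take the supremum of this quantity over $\calF$ and realise it by an increasing sequence $(f_k)$ in $\calF$. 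Then $h_0 \coloneqq \sup_k f_k$, which exists by stage (1), dominates every $f \in \calF$ a.e.\ on $\Omega_0$: otherwise $f \vee h_0$ would strictly increase the integral. This yields an essential supremum on each finite-measure piece.
\item Gluing. The pieces on different finite-measure sets agree a.e.\ on their overlaps, by uniqueness of the essential supremum. To obtain a global element of $E$, I would choose a maximal disjoint family of finite-measure sets and glue the local suprema along it.
\end{enumerate}

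This gluing step is where I expect the main obstacle. For an uncountable disjoint family, the glued function must be measurable and must dominate each $f$ on sets not covered by the family. The first thing I would try is to use maximality of the disjoint family to show that its complement is locally null. I would then check measurability of the glued function directly from the definition of $\Sigma$. If this fails for a pathological $\Sigma$, the precise additional property needed is that $\mu$ is localizable (Maharam), and I would isolate that requirement at this point.

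\emph{Lateral completeness.} Lateral completeness reduces to the same gluing problem. Disjoint positive classes $(f_i)$ have essentially disjoint supports $\{f_i > 0\}$, and their supremum should be the function equal to $f_i$ on $\{f_i > 0\}$ and $0$ elsewhere. For countable families this is immediate. For arbitrary families I would again glue over the supports, using the mechanism of step (3) to obtain measurability. Finally, universal completeness follows by combining Dedekind completeness with lateral completeness.
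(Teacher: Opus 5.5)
Your Archimedean argument and the $\sigma$-completeness stage are correct for every measure space. Step (3), however, is a genuine gap, and it cannot be closed: for an arbitrary measure space the proposition is false. Your hedge at the end of the Dedekind completeness part therefore has to become a hypothesis.

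Here is a counterexample. Let $\Omega$ be uncountable, $\Sigma$ the countable--cocountable $\sigma$-algebra and $\mu$ the counting measure, so $\mu(A)=\infty$ for cocountable $A$; this $\mu$ is even semi-finite. The only null set is $\emptyset$, so $L^0(\Omega)$ is the space of all measurable functions with the pointwise order. Every measurable $h$ equals a constant $c$ off some countable set $N$. Choose $S \subseteq \Omega$ with $S$ and $\Omega \setminus S$ both uncountable. The disjoint family $(\one_{\{\omega\}})_{\omega \in S}$ is bounded above by $\one$. If $h$ is any upper bound, then $h \geq 0$ and $c = h(\omega) \geq 1$ for any $\omega \in S \setminus N$. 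Picking $y \in \Omega \setminus (S \cup N)$, the function $h \one_{\Omega \setminus \{y\}}$ is a measurable upper bound strictly below $h$. Hence no supremum exists, and $L^0(\Omega)$ is neither Dedekind complete nor laterally complete.

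This is exactly where your plan breaks. The singletons form a maximal disjoint family of finite-measure sets with empty complement. Gluing the local suprema along them produces $\one_S$, which is not $\Sigma$-measurable. So no check ``directly from the definition of $\Sigma$'' can succeed.

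The statement needs $\sigma$-finiteness, or, as you suspected, localizability; for semi-finite $\mu$ localizability is exactly the right condition. $\sigma$-finiteness is the standing hypothesis of the section and the only case in which the proposition is used later. The paper's own proof tacitly relies on it as well. It cites Dedekind completeness as well known. Its identity $\sup_{i \in I} f_i = \sum_{i \in I} f_i$ for an arbitrary disjoint family is justified only because, for $\sigma$-finite $\mu$, at most countably many $f_i$ are non-zero.

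With $\sigma$-finiteness added, your plan closes easily. Write $\Omega = \bigcup_n A_n$ with $A_n$ increasing and $\mu(A_n) < \infty$.
\begin{itemize}
\item For Dedekind completeness, step (3) only glues over the countable partition $A_1, A_2 \setminus A_1, \dots$, which preserves measurability.
\item For lateral completeness, take a disjoint family $(f_i)_{i \in I}$. For each $n$ and $k$, at most $k\mu(A_n)$ indices satisfy $\mu(A_n \cap \{f_i \neq 0\}) > 1/k$. So only countably many $f_i$ are non-zero, and you are back in your countable case.
\end{itemize}

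Compared with the paper, your route then actually proves Dedekind completeness rather than citing it, via exhaustion with the bounded functional $f \mapsto \int_{\Omega_0} \arctan(f-g)\,\ud\mu$. It also makes explicit the countability that the paper's pointwise-sum argument uses implicitly.
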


\begin{proof}
    (i): This is \cite[Example~9.2(iv)]{Zaanen2012}.

    (ii): It is a well-known fact from basic measure theory that $E$ is Dedekind complete. So it is left to show the lateral completeness of $E$. So let $(f_i)_{i \in I}$ be a disjoint family in $E$. Then the essential supports of the $f_i$ are pairwise disjoint modulo null sets. Thus, one has the identity
    \begin{align*}
        \sup_{i \in I} f_i = \sum_{i \in I} f_i \in E
    \end{align*}
    almost everywhere. Hence, $E$ is laterally complete.
\end{proof}

The following proposition shows that $L^0(\Omega)$ is the universal completion of each $L^p$-space, $1 \leq p \leq \infty$ (see \cite[Theorem~7.73]{Aliprantis2003}).

\begin{proposition} \label{proposition:l0-universal-completion}
    Let $(\Omega, \Sigma, \mu)$ be a $\sigma$-finite measure space, $E \coloneqq L^0(\Omega)$ and let $1 \leq p \leq \infty$. Then $E$ is the universal completion of $L^p(\Omega)$.
\end{proposition}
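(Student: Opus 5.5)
We have to show that $L^0(\Omega)$ is universally complete and that $L^p(\Omega)$ sits in it as an order dense sublattice; the uniqueness part of Proposition~\ref{proposition:existence-universal-completion} then identifies $L^0(\Omega)$ with $(L^p(\Omega))^u$. Universal completeness is Proposition~\ref{proposition:l0-lattice-properties}(ii). Plainly $L^p(\Omega)$ is a vector sublattice of $L^0(\Omega)$, since both carry the pointwise almost everywhere lattice operations. So everything reduces to order density: for each $0 < f \in L^0(\Omega)$ we must find $g \in L^p(\Omega)$ with $0 < g \le f$.

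For $p = \infty$ the construction is direct. Since $f > 0$ in $L^0$, the set $\set{f > 0}$ has positive measure. It is the increasing union of the sets $\set{f > 1/n}$, so some $A \coloneqq \set{f > 1/n}$ has $\mu(A) > 0$. Then $g \coloneqq \tfrac1n \one_A$ is bounded, satisfies $0 < g \le f$, and therefore works.

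For $1 \le p < \infty$ the function $g$ must in addition have finite $p$-norm, and this is where $\sigma$-finiteness is used. Write $\Omega = \bigcup_k \Omega_k$ with $\mu(\Omega_k) < \infty$. Then $A = \bigcup_k (A \cap \Omega_k)$, so some $A' \coloneqq A \cap \Omega_k$ has $0 < \mu(A') < \infty$. Now $g \coloneqq \tfrac1n \one_{A'}$ lies in $L^p \cap L^\infty$ and satisfies $0 < g \le f$. This is the only step with any content, and it is mild.

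Finally, apply the uniqueness of the universal completion from Proposition~\ref{proposition:existence-universal-completion}. More precisely, use the characterisation behind \cite[Theorem~7.21]{Aliprantis2003}: any universally complete vector lattice containing $E$ as an order dense sublattice is Riesz isomorphic to $E^u$ via an isomorphism fixing $E$. This gives $(L^p(\Omega))^u = L^0(\Omega)$. The Archimedean hypothesis needed there holds by Proposition~\ref{proposition:l0-lattice-properties}(i), since $L^p$ is a sublattice of $L^0$.
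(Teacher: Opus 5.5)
Your argument is correct. The paper gives no proof of its own and simply cites \cite[Theorem~7.73]{Aliprantis2003}, and your verification (universal completeness of $L^0(\Omega)$, then order density of $L^p(\Omega)$ via $g = \tfrac1n \one_{A'}$ with $0 < \mu(A') < \infty$, then uniqueness of the universal completion) is the standard argument behind that reference. One correction to your commentary: $\sigma$-finiteness is not used only in the order-density step. It is also what makes Proposition~\ref{proposition:l0-lattice-properties}(ii) true, since for general measure spaces $L^0$ need not be Dedekind or laterally complete, so your appeal to that proposition is legitimate only because you are in the $\sigma$-finite case.
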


The next result characterizes order convergence and unbounded order convergence in $E$ as well as the notion of order boundedness.

\begin{proposition} \label{proposition:abstract-banach-principle}
    Let $(\Omega, \Sigma, \mu)$ be a $\sigma$-finite measure space and let $E \coloneqq L^0(\Omega)$. Then the following assertions hold:
	\begin{enumerate}[\upshape (i)]
		\item A sequence $(f_n)_{n \in \N}$ in $E$ is order bounded if and only if $\sup_{n \in \N} \abs{f_n} < \infty$ almost everywhere.
		\item A sequence in $E$ is uo-convergent if and only if it converges almost everywhere.
        \item A sequence in $E$ is order convergent if and only if it converges almost everywhere and is order bounded.
	\end{enumerate}
\end{proposition}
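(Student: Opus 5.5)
We prove the three assertions in order, working throughout with representatives of equivalence classes. Since $\mu$ is $\sigma$-finite, we fix a strictly positive $w \in L^1(\Omega)$, i.e.\ $w>0$ almost everywhere. This is only needed to identify countable suprema in $E$ with pointwise suprema. The first step is a reduction. If $(g_n)_{n\in\N}$ is a sequence in $E$ that is bounded above in $E$, its supremum in $E$ (which exists by Proposition~\ref{proposition:l0-lattice-properties}(ii)) coincides almost everywhere with the pointwise supremum. Indeed, the pointwise supremum $g$ is measurable and is an upper bound. If $h \in E$ is any upper bound, then $g \le h$ holds off a countable union of null sets. Consequently $g$ is finite almost everywhere, and $g$ is the supremum. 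Countable infima behave in the same way.

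For (i), first suppose $\abs{f_n}\le h$ for some $h\in E$. Then the pointwise supremum satisfies $\sup_n\abs{f_n}\le h<\infty$ almost everywhere. Conversely, if $g\coloneqq\sup_n\abs{f_n}$ is finite almost everywhere, then $g$ is (after modification on a null set) an element of $E$. Hence $-g\le f_n\le g$ for all $n$, so the sequence is order bounded.

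For (iii), suppose first that $f_n\oto f$. Then $\abs{f_n}\le\abs f+y_1$, which gives order boundedness. Moreover $\abs{f_n-f}\le y_n$ with $y_n\downarrow0$ in $E$. By the reduction step, $\inf_n y_n=0$ pointwise almost everywhere, and $(y_n)$ is pointwise decreasing almost everywhere. Thus $y_n\to0$ almost everywhere, and so $f_n\to f$ almost everywhere. Conversely, suppose $f_n\to f$ almost everywhere and $(f_n)$ is order bounded. Set $y_n\coloneqq\sup_{m\ge n}\abs{f_m-f}$. These suprema exist in $E$ because of the bound $2h$, using that $\abs f\le h$ almost everywhere. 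They are decreasing, and pointwise they tend to $\limsup_m\abs{f_m-f}=0$. By the reduction step this means $y_n\downarrow0$ in $E$, so $f_n\oto f$. Alternatively, one could invoke Lemma~\ref{lemma:order-cauchy} here.

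For (ii), suppose $f_n\uoto f$. Take $u\coloneqq\one_\Omega\in E_+$, which is allowed since $E=L^0$ contains the constants. By (iii), the order convergence $\abs{f_n-f}\wedge\one\oto0$ yields $\min(\abs{f_n-f},1)\to0$ almost everywhere, and this is equivalent to $f_n\to f$ almost everywhere. Conversely, suppose $f_n\to f$ almost everywhere and let $u\in E_+$. The sequence $(\abs{f_n-f}\wedge u)_{n\in\N}$ is order bounded by $u$ and tends to $0$ almost everywhere, so (iii) gives order convergence to $0$. The only point requiring care is the reduction step, namely that lattice suprema in $E$ agree with pointwise ones. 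Countability is essential there, since uncountable suprema in $L^0$ are not pointwise. The remaining steps are routine.
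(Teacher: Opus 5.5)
Your proof is correct. For (i) and (iii) it is the paper's argument: the same dominating element $\sup_n \abs{f_n}$ in (i), and in (iii) the same bound $\abs{f_n}\le \abs{f}+y_1$ and the same choice $y_n=\sup_{m\ge n}\abs{f_m-f}$.

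Your reduction step makes explicit what the paper uses tacitly, namely that countable suprema and infima in $L^0(\Omega)$ are computed pointwise almost everywhere. The paper passes from $u_n\downarrow 0$ in $E$ to $u_n(x)\downarrow 0$ off a null set, and conversely asserts $u_N\downarrow 0$ in $E$, without comment.

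The genuine difference is in (ii). The paper simply quotes Proposition~3.1 of Gao, Troitsky and Xanthos. You instead prove (iii) first and derive (ii) from it. For the forward direction you test only against the element $\one$. For the converse you apply (iii) to $\abs{f_n-f}\wedge u$, which is dominated by $u$. This makes the proposition self-contained and elementary. It also shows in passing that, for sequences in $L^0(\Omega)$, uo-convergence is already detected by the single element $\one$. The citation buys only brevity and a link to the general uo-literature.

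A minor point: the strictly positive $w\in L^1(\Omega)$ you fix at the start is never used. The pointwise description of countable suprema and infima holds in $L^0$ of any measure space, without $\sigma$-finiteness.
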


\begin{proof}
    (i): Suppose first that $(f_n)_{n\in\mathbb N}$ is order bounded in $E$. Then there exists
    $g\in E_+$ such that $\abs{f_n} \leq g$ for all $n \in \N$. Since $g \in L^0(\Omega)$, one has $ g < \infty$ almost everywhere. Hence,
    $\sup_{n\in\mathbb N} \abs{f_n(x)} \leq g(x) < \infty$ for a.e.\ $x \in\Omega$.
    Therefore, $\sup_{n \in \N} \abs{f_n} < \infty$ almost everywhere.

    Conversely, assume that $g \coloneqq \sup_{n\in\mathbb N} \abs{f_n} < \infty$ almost everywhere. Then clearly $g \in E$ and $\abs{f_n} \leq g$ for all $n \in \N$. So $(f_n)_{n\in\mathbb N}$ is order bounded in $E$.

    (ii): This is \cite[Proposition~3.1]{Gao2017}.

    (iii): First, let $(f_n)_{n\in\mathbb N}$ in $E$ and $f \in E$ such that $f_n \to f$ in order as $n \to \infty$. Then there exists a sequence $(u_n)_{n\in\mathbb N}$ in $E_+$ such that $u_n \downarrow 0$ and $\abs{f_n - f} \leq u_n$ for all $n \in \N$. Since $u_n \downarrow 0$, there exists a null set $N \subseteq \Omega$ such that $u_n(x) \downarrow 0$ for all $x \in \Omega \setminus N$. Consequently, $\abs{f_n(x) - f(x)} \leq u_n(x) \to 0$ as $n \to \infty$ for all $x \in \Omega \setminus N$. Thus, $f_n \to f$ almost everywhere. Moreover,
    \begin{align*}
        \abs{f_n} \leq \abs{f} + \abs{f_n - f} \leq \abs{f} + u_n \leq \abs{f} + u_1.
    \end{align*}
    Thus $(f_n)$ is order bounded.

    Conversely, suppose that $f_n \to f$ almost everywhere and that $(f_n)$ is
    order bounded. Then there exists $g \in E_+$ such that $\abs{f_n} \leq g$ for all $n \in \N$. Passing to the limit almost everywhere yields $\abs{f} \leq g$ almost everywhere, and therefore $\abs{f_n - f} \leq \abs{f_n} + \abs{f} \leq 2g$. Define $u_N \coloneqq \sup_{n \geq N} \abs{f_n - f} \in E$. Then $u_N \downarrow 0$ in $E$ and $\abs{f_n - f} \leq u_n$ for all $n \in \N$. Thus $f_n \to f$ in order as $n \to \infty$.
\end{proof}

Looking at the abstract setup from Section~\ref{section:an-abstract-banach-principle}, we need a locally solid topology on $E$. For that purpose, we endow $E$ with the topology of local convergence in measure and prove that this topology satisfies the abstract conditions mentioned in General Assumption~\ref{general-assumption:standing}. We note in passing that it can be even shown that it is the coarsest locally solid Hausdorff topology on $L^0(\Omega)$ (cf.\ \cite[Theorem~7.74]{Aliprantis2003}).

\begin{proposition} \label{proposition:measure-topology}
    Let $(\Omega, \Sigma, \mu)$ be a $\sigma$-finite measure space and let $E \coloneqq L^0(\Omega)$. Then the topology of local convergence in measure on $E$ is a metrisable, locally solid Hausdorff topology and has the $\sigma$-Lebesgue property.
\end{proposition}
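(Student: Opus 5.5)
Since $\mu$ is $\sigma$-finite, I will fix a strictly positive weight. Choose a partition $\Omega = \bigcup_k \Omega_k$ with $0 < \mu(\Omega_k) < \infty$ (null pieces can be discarded) and set $w \coloneqq \sum_k 2^{-k} \mu(\Omega_k)^{-1} \one_{\Omega_k}$. Then $w > 0$ almost everywhere and $\int w \,\ud\mu \le 1$. On $E = L^0(\Omega)$ I define
\begin{align*}
    \rho(f) \coloneqq \int_\Omega \min\set{\abs{f}, 1}\, w \,\ud\mu .
\end{align*}
The plan is to show that $\rho$ is a Riesz pseudonorm, that the metric $d(f,g) = \rho(f-g)$ induces the topology of local convergence in measure, and then to read off the remaining properties from $\rho$.

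\textbf{Riesz pseudonorm and metrisability.} Subadditivity follows from the pointwise inequality $\min\set{\abs{f+g},1} \le \min\set{\abs f,1} + \min\set{\abs g,1}$. Monotonicity under $\abs{f}\le\abs{g}$ is immediate. For $\rho(\lambda f) \to 0$ as $\lambda \to 0$, I use dominated convergence with the integrable majorant $w$. Hence $\rho$ generates a locally solid linear topology (cf.\ \cite[Theorem~2.28]{Aliprantis2003}), and this topology is metrisable via $d$. It is Hausdorff because $\rho(f) = 0$ forces $\min\set{\abs f,1} w = 0$ a.e., so $f = 0$ a.e. since $w > 0$.

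\textbf{Identification with local convergence in measure.} This is the step where some care is needed. I take local convergence in measure to mean $\mu(\set{\abs{f_n - f} > \varepsilon} \cap A) \to 0$ for every $\varepsilon > 0$ and every $A$ of finite measure. Both topologies are metrisable, so it suffices to compare convergent sequences, and I will prove $\rho(f_n) \to 0$ if and only if $f_n \to 0$ locally in measure.

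For the direction from local convergence in measure to $\rho(f_n)\to 0$, fix $\eta > 0$. Choose $K$ such that $\int_{\bigcup_{k>K}\Omega_k} w < \eta$, and set $A = \bigcup_{k\le K}\Omega_k$. Since $w \le c$ on $A$, we get
\begin{align*}
    \rho(f_n) \le \eta + c\big(\varepsilon \mu(A) + \mu(A \cap \set{\abs{f_n} > \varepsilon})\big).
\end{align*}

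For the converse, note that $w \ge c_K > 0$ on $\Omega_1 \cup \dots \cup \Omega_K$. By Chebyshev, $\mu(\set{\abs{f_n} > \varepsilon} \cap \Omega_k) \to 0$ for each $k$. For a general $A$ of finite measure, I split $A$ into $A \cap \bigcup_{k\le K}\Omega_k$ and a tail $\mu(A \cap \bigcup_{k>K}\Omega_k)$, and the tail is small by continuity of measure. The main obstacle is really the choice of definition of local convergence in measure. If the paper intends $\mu$-convergence on sets of finite measure, the argument above suffices. Whichever definition is meant, the topology generated by $\rho$ is the intended one.

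\textbf{$\sigma$-Lebesgue property.} Let $f_n \downarrow 0$ in $E$. Then $f_n \downarrow 0$ pointwise a.e.: the pointwise infimum $g$ is measurable with $0 \le g \le f_n$, so $g$ is a lower bound, and hence $g = 0$ a.e. Then $\min\set{f_n,1}\, w \to 0$ a.e. with majorant $w$, and dominated convergence gives $\rho(f_n) \to 0$.
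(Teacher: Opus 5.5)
Your proof is correct, but it takes a different route from the paper's.

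\textbf{The paper's route.} The paper fixes an increasing exhaustion $(A_n)$ by sets of finite measure. It builds the pseudometrics $d_n(f,g)=\int_{A_n}\frac{\abs{f-g}}{1+\abs{f-g}}\,\ud\mu$ and glues them into $d=\sum_n 2^{-n}\frac{d_n}{1+d_n}$. It then checks local solidity by hand on the basic neighbourhoods $U(A,\varepsilon,\delta)$, and checks the $\sigma$-Lebesgue property by dominated convergence on each set $A$ of finite measure.

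\textbf{Your route.} You replace $\mu$ by the equivalent finite measure $w\,\ud\mu$ and work with the single functional $\rho(f)=\int\min\set{\abs{f},1}\,w\,\ud\mu$. This $\rho$ is a Riesz pseudonorm in the paper's sense, so metrisability, the Hausdorff property and local solidity all follow at once from its three defining properties, in the spirit of Lemma~\ref{lemma:f-norm}. The $\sigma$-Lebesgue property is then a single dominated-convergence argument with majorant $w$.

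\textbf{What your route makes explicit.} First, you give the tail estimate $\mu\bigl(A\setminus\bigcup_{k\le K}\Omega_k\bigr)\to 0$. This is what passes from convergence in measure on the exhaustion sets to convergence on an arbitrary set of finite measure; the paper only verifies the former and then asserts that $d$ induces local convergence in measure. Second, you justify that $f_n\downarrow 0$ in $L^0$ forces $f_n\downarrow 0$ almost everywhere, which the paper takes for granted. What the paper's version buys is that it stays closer to the usual definition via an exhaustion and needs no auxiliary weight.

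\textbf{Two small points.} Your sentence ``both topologies are metrisable, so it suffices to compare convergent sequences'' presupposes that the local-measure topology is metrisable, which is part of what you are proving. This is harmless: both of your estimates are uniform in $f$, so they give inclusions between neighbourhoods of zero (a $\rho$-ball inside each $U(A,\varepsilon,\delta)$, and some $U(A,\varepsilon,\delta)$ inside each $\rho$-ball), and that is exactly what identifies the two topologies. Your closing remark ``whichever definition is meant'' is unsupported, but you do not need it, because the paper uses the definition via sets of finite measure, and that is the one you treat.
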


\begin{proof}
    Since $(\Omega,\Sigma,\mu)$ is $\sigma$-finite, there exists an increasing sequence $(A_n)_{n \in \N}$ in $\Sigma$ such that
    \begin{align*}
        \Omega=\bigcup_{n=1}^\infty A_n \quad \text{and} \quad \mu(A_n) < \infty
    \end{align*}
    for all $n \in \N$. We first show that the topology of local convergence in measure is metrisable. For each $n\in\mathbb N$, consider
    \begin{align*}
        d_n(f,g) \coloneqq \int_{A_n} \frac{\abs{f - g}}{1 + \abs{f - g}} \, \ud \mu.
    \end{align*}
    It is clear that $d_n$ is a metric inducing convergence in measure on the finite measure space $(A_n, \Sigma|_{A_n}, \mu|_{A_n})$ (cf.\ the discussion before \cite[Theorem~7.74]{Aliprantis2003}). Now define
    \begin{align*}
        d(f,g) \coloneqq \sum_{n = 1}^{\infty} 2^{-n} \frac{d_n(f, g)}{1 + d_n(f, g)}.
    \end{align*}
    Then for $(f_n)_{n \in \N}$ in $E$ and $f \in E$, one has $d(f_k, f) \to 0$ if and only if $d_n(f_k, f) \to 0$ for all $n \in \N$. Hence $d$ induces the topology of local convergence in measure, so the topology is metrisable.

    To see that the topology is Hausdorff, note that if $f,g \in E$ satisfy $d(f, g)=0$, then $d_n(f, g) = 0$ for all $n \in \N$. Thus, $f = g$ almost everywhere on each of the $A_n$. As $\Omega=\bigcup_{n=1}^{\infty}A_n$, it follows that $f=g$ almost everywhere, i.e., the topology generated by $d$ is Hausdorff.

    Next we show that the topology of local convergence in measure is locally solid. A neighbourhood basis of $0$ is given by the sets
    \begin{align*}
        U(A,\varepsilon,\delta) \coloneqq \set{f \in E : \mu(A \cap \set{\abs f > \varepsilon })<\delta},
    \end{align*}
    where $A \in \Sigma$ has finite measure and $\varepsilon, \delta > 0$. Suppose that $g \in U(A,\varepsilon,\delta)$ and that $\abs f \leq \abs g$. Then
    \begin{align*}
        \set{\abs f > \varepsilon} \subseteq \set{\abs g > \varepsilon},
    \end{align*}
    and therefore
    \begin{align*}
        \mu(A\cap\set{\abs f > \varepsilon}) \leq \mu(A\cap\set{\abs g > \varepsilon}) < \delta.
    \end{align*}
    Hence $f \in U(A,\varepsilon,\delta)$. Thus every basic neighbourhood of $0$ is solid, so the topology of local convergence in measure is locally solid.

    Finally, it is shown that the topology has the $\sigma$-Lebesgue property. So pick a sequence $(f_n)_{n \in \N}$ in $E$ such that $f_n \downarrow 0$. Then there exists a null set $N \subseteq \Omega$ such that $f_n(x) \downarrow 0$ for all $x \in \Omega \setminus N$. Let $A \in \Sigma$ such that $\mu(A) < \infty$ and $\varepsilon > 0$. Since $\one_{A \cap \set{f_n > \varepsilon}} \to 0$ almost everywhere on $A$, dominated convergence yields
    \begin{align*}
        \mu (A \cap\set{f_n > \varepsilon}) = \int_A \one_{\set{f_n > \varepsilon}} \, \ud\mu \longrightarrow 0
    \end{align*}
    and thus $f_n \to 0$ locally in measure as $n \to \infty$. Hence, the topology of local convergence in measure has the $\sigma$-Lebesgue property.
\end{proof}

\subsection*{The classical setting revisited} \label{subsection:classical-banach-principle-and-hopf-dunford-schwartz}

We specialise the abstract results of the previous sections to recover the classical theory outlined, e.g., in \cite[Chapter~11]{Eisner2015}.

Let $(\Omega, \Sigma, \mu)$ be a $\sigma$-finite measure space and let $(T_n)_{n \in \N}$ be a sequence of bounded operators on $L^p(\Omega)$, $1 \leq p < \infty$. Following \cite[Definition~11.8]{Eisner2015}, the maximal operator $T^\star f \coloneqq \sup_{n \in \N} \abs{T_nf}$ is said to satisfy an \emph{abstract maximal inequality} if there is a function $c \colon (0,\infty) \to [0,\infty)$ with $c(\lambda) \to 0$ as $\lambda \to \infty$ such that
\begin{align*}
    \mu(\set{T^\star f > \lambda}) \leq c(\lambda)
\end{align*}
for all $\lambda > 0$ and $f \in L^p(\Omega)$ with $\norm f_p \leq 1$. In this setting, Banach's classical principle takes the following form (cf.\ \cite[Proposition~11.9]{Eisner2015}).

\begin{corollary}[Banach's principle, classical form] \label{corollary:classical-banach-principle}
    Let $(T_n)_{n \in \N}$ be a sequence of bounded operators on $L^p(\Omega)$, $1 \leq p < \infty$. Further suppose that $T^\star$ satisfies an abstract maximal inequality. Then
    \begin{align*}
        F \coloneqq \set{f \in L^p(\Omega) : (T_nf)_{n \in \N} \text{ converges almost everywhere}}
    \end{align*}
    is a closed subspace of $L^p(\Omega)$.
\end{corollary}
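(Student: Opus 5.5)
The plan is to reduce the statement to Theorem~\ref{theorem:banach-principle} with $X \coloneqq L^p(\Omega)$ and $E \coloneqq L^0(\Omega)$, equipped with the topology $\tau$ of local convergence in measure. The dictionary of Proposition~\ref{proposition:abstract-banach-principle} then translates order convergence into almost everywhere convergence. We work first with $\mu$ finite. Parts (a) and (b) of General Assumption~\ref{general-assumption:standing} hold by Proposition~\ref{proposition:l0-lattice-properties} and Proposition~\ref{proposition:measure-topology}. For (c), each $T_n$ is norm-to-$L^p$ bounded, and the inclusion $L^p \hookrightarrow L^0$ is continuous for local convergence in measure by Chebyshev's inequality, $\mu(A\cap\{\abs{f}>\varepsilon\}) \le \varepsilon^{-p}\norm{f}_p^p$.

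The only substantive step is (d): each orbit must be order bounded. By Proposition~\ref{proposition:abstract-banach-principle}(i), this amounts to $T^\star f<\infty$ almost everywhere. For $f \neq 0$, apply the abstract maximal inequality to $f/\norm{f}_p$. This gives
\begin{align*}
\mu(\{T^\star f > \lambda \norm{f}_p\}) \le c(\lambda) \to 0,
\end{align*}
so $\mu(\{T^\star f=\infty\})=0$. Here $T^\star f$ is understood as the pointwise supremum, a measurable $[0,\infty]$-valued function.

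With (a)--(d) verified, Theorem~\ref{theorem:banach-principle}(i) shows that $C=\{f: (T_nf) \text{ order converges in } L^0\}$ is closed. Remark~\ref{remark:uo} and Proposition~\ref{proposition:abstract-banach-principle}(ii), or directly part (iii) combined with the order boundedness just established, give $C=F$.

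I expect the main obstacle to be the measure-theoretic bookkeeping rather than any lattice argument. Proposition~\ref{proposition:abstract-banach-principle} and Proposition~\ref{proposition:measure-topology} require $\sigma$-finiteness, which we assume throughout this subsection. If needed, the argument extends to the $\sigma$-finite case as stated, since the maximal inequality holds on the whole space and the proof of (d) used no finiteness of $\mu$. One must also check that the almost everywhere limit lies in $L^0$, that is, that it is finite. This is automatic because the sequence is dominated by the almost everywhere finite function $T^\star f$.
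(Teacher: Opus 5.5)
Your proposal is correct and follows the paper's proof essentially step for step. You make the same choice $X = L^p(\Omega)$, $E = L^0(\Omega)$ with local convergence in measure, use Chebyshev's inequality for continuity of $T_n \colon X \to E$, and combine homogeneity of $T^\star$ with $c(\lambda) \to 0$ to get $T^\star f < \infty$ almost everywhere and hence order boundedness via Proposition~\ref{proposition:abstract-banach-principle}(i). You then conclude with Theorem~\ref{theorem:banach-principle}(i) and Proposition~\ref{proposition:abstract-banach-principle}(iii) to identify $C = F$; the only superfluous element is your hedge about working ``first with $\mu$ finite'', since $\sigma$-finiteness is the standing assumption and, as you note yourself, no step uses finiteness of $\mu$.
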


\begin{proof}
    Set $X \coloneqq L^p(\Omega)$ and $E \coloneqq L^0(\Omega)$. By Proposition~\ref{proposition:l0-lattice-properties}, $E$ is Dedekind $\sigma$-complete, and by Proposition~\ref{proposition:measure-topology} the topology $\tau$ of local convergence in measure on $E$ is metrisable, Hausdorff, locally solid and has the $\sigma$-Lebesgue property. Since, by Chebyshev's inequality (see \cite[6.17]{Folland1999}), $L^p$-convergence implies local convergence in measure, each $T_n \colon X \to E$ is continuous with respect to the norm topology on $X$ and $\tau$ on $E$.

    By homogeneity of $T^\star$, the maximal inequality extends from the unit ball to all of $X$ and, letting $\lambda \to \infty$, one obtains $T^\star f < \infty$ almost everywhere. By Proposition~\ref{proposition:abstract-banach-principle}(i), $(T_nf)_{n \in \N}$ is thus order bounded in $E$ for every $f \in X$, so General Assumption~\ref{general-assumption:standing} holds. Thus, Theorem~\ref{theorem:banach-principle}(i) shows that
    \begin{align*}
        C \coloneqq \set{f \in X : (T_nf)_{n\in\N} \text{ is order convergent in } E}
    \end{align*}
    is a closed subspace of $X$. Since every orbit $(T_n f)_{n \in \N}$, $f \in X$, is order bounded in $E$, Proposition~\ref{proposition:abstract-banach-principle}(iii) shows that order convergence and almost everywhere convergence coincide on all of $X$, whence $C = F$.
\end{proof}

\begin{remark}
    Corollary~\ref{corollary:classical-banach-principle} recovers \cite[Proposition~11.9]{Eisner2015}. The classical statement postulates the quantitative bound $c(\lambda)$ outright, whereas Theorem~\ref{theorem:banach-principle} only requires the pointwise order boundedness of each orbit, i.e., General Assumption~\ref{general-assumption:standing}(d). By Lemma~\ref{lemma:maximal-operator-continuity}, this pointwise hypothesis already forces $T^\star$ to be continuous at the origin via Baire's category theorem, with no quantitative rate needed; that an abstract maximal inequality is necessary and not merely sufficient for pointwise convergence results of this type is classical, going back to Stein \cite{Stein1961} and treated systematically in \cite[Chapter~1.7]{Krengel1985}. The function $c(\lambda)$ of \cite[Definition~11.8]{Eisner2015} is thus a convenient sufficient condition for this continuity rather than an intrinsic requirement of the principle.
\end{remark}

We now turn to the Hopf--Dunford--Schwartz theorem (see \cite[Theorem~11.4]{Eisner2015}). Recall that a positive operator $T$ on $L^1(\Omega)$ is called a \emph{Dunford--Schwartz operator} if $\norm{Tf}_1 \leq \norm f_1$ for all $f \in L^1(\Omega)$ and $\norm{Tf}_\infty \leq \norm f_\infty$ for all $f \in L^1(\Omega) \cap L^\infty(\Omega)$.

\begin{corollary}[Hopf--Dunford--Schwartz] \label{corollary:hopf-dunford-schwartz}
    Let $(\Omega, \Sigma, \mu)$ be a finite measure space and let $T$ be a Dunford--Schwartz operator on $E \coloneqq L^1(\Omega)$. Then the Cesàro means $A_n f = \frac 1n \sum_{k=0}^{n-1}T^kf$ converge almost everywhere for every $f \in E$.
\end{corollary}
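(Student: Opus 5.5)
The plan is to apply Corollary~\ref{corollary:individual-from-maximal} with $E = L^1(\Omega)$, $e = \one$ and $\varphi(f) = \int_\Omega f \,\ud\mu$, and then translate uo-convergence back to almost everywhere convergence.

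\textbf{Verifying the hypotheses on $E$.} Since $\mu$ is finite, $L^1(\Omega)$ is an order continuous Banach lattice and $\one \in L^1(\Omega)$ is a weak unit. The functional $\varphi$ is strictly positive, and it is $\sigma$-order continuous by monotone convergence (or simply by order continuity of the norm).

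\textbf{The superinvariant pair.} First, $T\one \leq \one$: the $L^\infty$-contractivity gives $\norm{T\one}_\infty \leq 1$, and positivity gives $T\one \geq 0$, so $0 \leq T\one \leq \one$. Second, $T'\varphi \leq \varphi$: for $f \geq 0$ one has $\dual{f, T'\varphi} = \int Tf \,\ud\mu = \norm{Tf}_1 \leq \norm{f}_1 = \dual{f,\varphi}$, using positivity of $T$. Hence $(\one,\varphi)$ is a superinvariant pair. Power-boundedness is immediate because $\norm{T}_{1\to 1} \leq 1$.

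\textbf{Mean ergodicity.} This is the step I expect to be the main obstacle, since $L^1$ is not reflexive. I would first show that $\{A_n f\}$ is relatively weakly compact for $f \in L^\infty$: the $L^\infty$-contractivity gives $\abs{A_n f} \leq \norm{f}_\infty \one$, so these means lie in an order interval of the order continuous space $L^1$, which is weakly compact. Since $L^\infty$ is dense in $L^1$ and the $A_n$ are uniformly bounded, relative weak compactness of the orbits passes to all $f \in L^1$ by a standard $\varepsilon/3$-argument. By the mean ergodic theorem (e.g.\ \cite[Theorem~8.20]{Eisner2015}), $T$ is then mean ergodic, with mean ergodic projection $P$.

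\textbf{Conclusion.} Corollary~\ref{corollary:individual-from-maximal} yields $A_n f \uoto Pf$ in $L^1(\Omega)$. It remains to pass to almost everywhere convergence. By Lemma~\ref{lemma:uo-vs-o} together with Proposition~\ref{proposition:l0-universal-completion}, uo-convergence in $L^1$ coincides with uo-convergence in $L^0(\Omega)$ for sequences in $L^1$: $L^1$ is an order dense ideal, so $\abs{A_nf - Pf}\wedge u$ for $u \in L^0_+$ can be handled via $u \wedge k\one$. More directly, the stronger statement of Theorem~\ref{theorem:individual-ergodic} gives order convergence in $E^u = L^0(\Omega)$. Proposition~\ref{proposition:abstract-banach-principle}(iii) then shows that $A_n f \to Pf$ almost everywhere, which proves the corollary.
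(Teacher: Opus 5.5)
Your proposal is correct and follows the paper's proof. You use the same superinvariant pair $(\one, \int_\Omega (\cdot)\,\ud\mu)$, which feeds Theorem~\ref{theorem:individual-ergodic} (via Theorem~\ref{theorem:maximal-ergodic} and Corollary~\ref{corollary:individual-from-maximal}) to give order convergence of $A_n f$ to $Pf$ in $E^u = L^0(\Omega)$, and this is then read off as almost everywhere convergence.

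The deviations are minor. You verify mean ergodicity directly (order intervals of $L^1$ are weakly compact and $L^\infty$ is dense), whereas the paper simply cites \cite[Theorem~8.24]{Eisner2015}. You also conclude with Proposition~\ref{proposition:abstract-banach-principle}(iii), whereas the paper passes through uo-convergence in $L^0(\Omega)$ and part~(ii).
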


\begin{proof}
    Since $\mu$ is finite, one has $\one \in E \cap L^\infty(\Omega)$, and positivity of $T$ together with the $L^\infty$-contractivity yields $T\one \leq \norm{\one}_\infty \one = \one$; thus $e \coloneqq \one$ is a super fixed point of $T$. As an $L^1$-contraction, $T$ is power-bounded on $E$, and $T$ is mean ergodic on $E$ with mean ergodic projection $P$ by \cite[Theorem~8.24]{Eisner2015}.

    Let $\varphi \colon E \to \R, \, f \mapsto \int_\Omega f \, \ud \mu$. Then $\varphi$ is strictly positive and $\sigma$-order continuous, and for $0 \leq f \in E$ one has
    \begin{align*}
        \dual{f, T'\varphi} = \dual{Tf,\varphi} = \int_\Omega Tf \, \ud\mu = \norm{Tf}_1 \leq \norm f_1 = \int_\Omega f \, \ud\mu = \dual{f,\varphi},
    \end{align*}
    so $\varphi$ is a super fixed point of $T'$. Thus $(e,\varphi)$ is a superinvariant pair for $T$ (cf.\ Remark~\ref{remark:calderon}). Since $E = L^1(\Omega)$ has order continuous norm and weak unit $e$, it satisfies the hypotheses of Theorem~\ref{theorem:individual-ergodic}, so by Corollary~\ref{corollary:individual-from-maximal} the sequence $(A_n f)_{n\in\N}$ is order convergent to $Pf$ in $E^u$ and, in particular, $A_n f \uoto Pf$ in $E$ for every $f \in E$.

    Moreover, $E^u = L^0(\Omega)$ by Proposition~\ref{proposition:l0-universal-completion}. Order convergence in $E^u$ implies uo-convergence there, and Proposition~\ref{proposition:abstract-banach-principle}(ii) identifies uo-convergence in $L^0(\Omega)$ with almost everywhere convergence. Hence $A_nf \to Pf$ almost everywhere for every $f \in E$.
\end{proof}

\begin{remark}
    Corollary~\ref{corollary:hopf-dunford-schwartz} recovers \cite[Theorem~11.4]{Eisner2015} in the case of a finite measure space, which is also the case treated first in \cite[Chapter~11]{Eisner2015} before an exhaustion argument extends it to general measure spaces; we do not pursue this extension here, since it requires working with $L^0$-spaces over measure spaces that need not be $\sigma$-finite, outside the scope of Proposition~\ref{proposition:l0-universal-completion}. Together with Remark~\ref{remark:maximal-ergodic-classical}, the same superinvariant pair $(e,\varphi) = (\one, \int_\Omega (\cdot)\,\ud\mu)$ also recovers the maximal ergodic theorem and the weak type $(1,1)$-maximal inequality
    \begin{align*}
        \lambda \mu(\set{M\abs f > \lambda}) \leq \norm{f}_1 \qquad (\lambda > 0)
    \end{align*}
    for all $f \in L^1(\Omega)$ of \cite[Theorem~11.11 and Corollary~11.12]{Eisner2015}.
\end{remark}

\begin{remark} \label{remark:akcoglu}
    The superinvariant pair framework of Section~\ref{section:an-abstract-maximal-ergodic-theorem} is intrinsically tied to the Dunford--Schwartz setting: by Remark~\ref{remark:calderon}, a superinvariant pair encodes simultaneous contractivity with respect to an $L^1$- and an $L^\infty$-norm. It therefore does not cover Akcoglu's pointwise ergodic theorem (see \cite{Akcoglu1975}), which asserts almost everywhere convergence of the Cesàro means $A_nf$ for every positive linear contraction $T$ on $L^p(\Omega)$, $1 < p < \infty$, without assuming any simultaneous $L^1$- or $L^\infty$-contractivity. A general such $T$ admits no canonical superinvariant pair on $L^p(\Omega)$ itself, and Akcoglu's proof instead proceeds via a dilation of $T$ to an invertible isometry on an auxiliary measure space (cf.\ \cite{Akcoglu1977}), to which the classical Dunford--Schwartz theory is then applied and transferred back. This dilation is an independent construction and not covered by the vector lattice methods developed in this article.
\end{remark}

We close this section with a proof of Doob's martingale theorem (see \cite[11.5]{Williams1991}).

\begin{proposition}[Doob] \label{proposition:doob-martingale}
    Let $(\Omega, \Sigma, \mu)$ be a probability space and let $(\calF_n)_{n \in \N}$ be a filtration in $\Sigma$ such that $\Sigma = \sigma(\bigcup_{n \in \N} \calF_n)$. Set $T_n f \coloneqq \bbE(f \mid \calF_n)$ for $f \in L^1(\Omega)$. Then $T_n f \to f$ almost everywhere for each $f \in L^1(\Omega)$.
\end{proposition}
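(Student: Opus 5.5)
The proof follows the reduction scheme of Theorem~\ref{theorem:banach-principle} with $X \coloneqq L^1(\Omega)$ and $E \coloneqq L^0(\Omega)$, the latter carrying the topology of local convergence in measure. By Proposition~\ref{proposition:l0-lattice-properties} and Proposition~\ref{proposition:measure-topology}, General Assumption~\ref{general-assumption:standing}(a) and (b) hold. Each $T_n$ is an $L^1$-contraction, and $L^1$-convergence implies convergence in measure on a probability space by Chebyshev's inequality, so (c) holds as well.

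For (d) I need that $\sup_n \abs{T_n f} < \infty$ almost everywhere. I would derive this from Doob's maximal inequality $\lambda\,\mu(\set{\sup_{n} \abs{T_n f} > \lambda}) \leq \norm{f}_1$. To stay close to the spirit of the paper, this can be proved by a Garsia-type or stopping-time argument. Concretely, apply it to $\abs{f}$ using $\abs{T_n f} \leq T_n\abs{f}$, and on the finite horizon $N$ use the first time $T_n\abs{f}$ exceeds $\lambda$. Letting $\lambda \to \infty$ gives finiteness almost everywhere, and Proposition~\ref{proposition:abstract-banach-principle}(i) turns this into order boundedness in $E$. This step is the main obstacle, since it is the only genuinely probabilistic input. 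The alternative is to cite Doob's maximal inequality as classical.

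Next I choose the dense class $D \coloneqq \bigcup_{m\in\N} L^1(\Omega, \calF_m, \mu)$. It is dense in $L^1(\Omega)$ because $\bigcup_m \calF_m$ is an algebra generating $\Sigma$. The approximation lemma for $\sigma$-algebras generated by an algebra shows that indicators of sets in $\Sigma$ are $L^1$-limits of indicators of sets in $\bigcup_m \calF_m$, and simple functions are dense in $L^1$. For $f \in D$, say $f$ is $\calF_m$-measurable, the tower property gives $T_n f = f$ for all $n \geq m$. The sequence is therefore eventually constant, hence order convergent to $f$ in $E$: it is order bounded by $\max_{n<m}\abs{T_n f} + \abs{f}$, and the tail oscillation vanishes.

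By Theorem~\ref{theorem:banach-principle}(ii), $(T_n f)$ is then order convergent in $E$ for every $f \in L^1(\Omega)$, with limit $Qf$. To identify $Qf = f$, I would use Proposition~\ref{proposition:order-limit-operator}: $Q$ is continuous from $L^1$ into $(E,\tau)$ and agrees with the identity on the dense set $D$. The inclusion $L^1 \hookrightarrow L^0$ is also continuous and $\tau$ is Hausdorff, so $Q f = f$ for all $f$. Finally, Proposition~\ref{proposition:abstract-banach-principle}(iii) translates order convergence in $L^0(\Omega)$ into almost everywhere convergence, so $T_n f \to f$ almost everywhere.
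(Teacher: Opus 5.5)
Your proposal is correct and follows the paper's route: the Banach principle with $E = L^0(\Omega)$ under local convergence in measure, Doob's maximal inequality supplying the order boundedness, and the dense class $\bigcup_m L^1(\Omega,\calF_m,\mu)$ on which the orbits are eventually constant. The paper goes through Corollary~\ref{corollary:classical-banach-principle} and stops at $F = L^1(\Omega)$, which only gives almost everywhere convergence of $(T_n f)$ and never shows that the limit is $f$. Your argument closes that gap: by Proposition~\ref{proposition:order-limit-operator} the limit operator $Q$ is continuous, it agrees with the continuous inclusion $L^1(\Omega)\hookrightarrow L^0(\Omega)$ on the dense set $D$, and $\tau$ is Hausdorff, so $Qf = f$ for every $f$.
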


\begin{proof}
    Each $T_n$ is a positive contraction on $L^1(\Omega)$ by monotonicity and Jensen's inequality for conditional expectations, and by Doob's maximal inequality (see \cite[Theorem~19.12]{Schilling2005}) one has
    \begin{align*}
        \mu(\set{\sup_{n \in \N} \abs{T_nf} > \lambda}) \leq \frac{1}{\lambda} \norm{f}_1, \qquad (\lambda > 0),
    \end{align*}
    for all $f \in L^1(\Omega)$. This is an abstract maximal inequality. By Corollary~\ref{corollary:classical-banach-principle}, the set
    \begin{align*}
        F \coloneqq \set{f \in L^1(\Omega) : (T_nf)_{n \in \N} \text{ converges almost everywhere}}
    \end{align*}
    is a closed subspace of $L^1(\Omega)$.

    Set $D \coloneqq \bigcup_{m \in \N} L^1(\Omega, \calF_m, \mu|_{\calF_m})$. If $f \in L^1(\Omega, \calF_m, \mu|_{\calF_m})$, the tower property of conditional expectation yields $T_nf = f$ for all $n \geq m$, so $(T_nf)_{n \in \N}$ is eventually constant and $f \in F$. Thus, $D \subseteq F$. Since $\bigcup_m \calF_m$ generates $\Sigma$, $D$ is norm dense in $L^1(\Omega)$ by a standard result from measure theory. As $F$ is a closed subspace containing the dense set $D$, it follows that $F = L^1(\Omega)$.
\end{proof}

\begin{remark}
    We emphasise that the application of the abstract Banach principle in the proof of Proposition~\ref{proposition:doob-martingale} is genuine but somewhat tautological: the hard analytic input, Doob's maximal inequality, is precisely the kind of maximal estimate that the abstract theory of Section~\ref{section:an-abstract-banach-principle} is designed to package. In particular, the maximal inequality used here is external input and is not a consequence of Theorem~\ref{theorem:maximal-ergodic}; the latter concerns Cesàro means of a single power-bounded operator, whereas the conditional expectations $(T_n)_{n\in\N}$ do not arise as iterates of a fixed operator. The role of the abstract principle here is confined to upgrading the maximal inequality plus convergence on the dense class $D$ to convergence on all of $L^1(\Omega)$.
\end{remark}

\bibliographystyle{plain}
\bibliography{literature}

@book{Aliprantis2003,
	author = {Aliprantis, Charalambos D. and Burkinshaw, Owen},
	title = {Locally Solid {R}iesz Spaces with Applications to Economics},
	series = {Mathematical Surveys and Monographs},
	volume = {105},
	edition = {2},
	publisher = {American Mathematical Society},
	address = {Providence, RI},
	year = {2003},
}

@book{Eisner2015,
	author = {Eisner, Tanja and Farkas, B\'alint and Haase, Markus and Nagel, Rainer},
	title = {Operator Theoretic Aspects of Ergodic Theory},
	series = {Graduate Texts in Mathematics},
	volume = {272},
	publisher = {Springer},
	address = {Cham},
	year = {2015},
}

@article{Gao2017,
	author = {Gao, Niushan and Troitsky, Vladimir G. and Xanthos, Foivos},
	title = {Uo-convergence and its applications to {C}es\`aro means in {B}anach lattices},
	journal = {Israel Journal of Mathematics},
	volume = {220},
	number = {2},
	pages = {649--689},
	year = {2017},
}

@book{Krengel1985,
	author = {Krengel, Ulrich},
	title = {Ergodic Theorems},
	series = {De Gruyter Studies in Mathematics},
	volume = {6},
	publisher = {Walter de Gruyter},
	address = {Berlin},
	year = {1985},
}

@book{MeyerNieberg1991,
	author = {Meyer-Nieberg, Peter},
	title = {Banach Lattices},
	series = {Universitext},
	publisher = {Springer},
	address = {Berlin},
	year = {1991},
}

@article{Stein1961,
	author = {Stein, Elias M.},
	title = {On limits of sequences of operators},
	journal = {Annals of Mathematics (2)},
	volume = {74},
	pages = {140--170},
	year = {1961},
}

@book{Doob1953,
  author    = {Doob, Joseph L.},
  title     = {Stochastic Processes},
  publisher = {Wiley},
  year      = {1953}
}

@article{Stein1976,
  author  = {Stein, Elias M.},
  title   = {Maximal functions: spherical means},
  journal = {Proc. Natl. Acad. Sci. USA},
  volume  = {73},
  pages   = {2174--2175},
  year    = {1976}
}

@article{Bourgain1986,
  author  = {Bourgain, Jean},
  title   = {Averages in the plane over convex curves and maximal operators},
  journal = {J. Analyse Math.},
  volume  = {47},
  pages   = {69--85},
  year    = {1986}
}

@article{Billard1967,
  author  = {Billard, Pierre},
  title   = {Sur la convergence presque partout des s\'eries de {F}ourier-{W}alsh
             des fonctions de l'espace {$L^2(0,1)$}},
  journal = {Studia Math.},
  volume  = {28},
  pages   = {363--388},
  year    = {1967}
}

@article{Goldstein2000,
  title={Banach principle in the space of $\tau$-measurable operators},
  author={Goldstein, Michael and Litvinov, Semyon},
  journal={Studia Mathematica},
  volume={143},
  number={1},
  pages={33--41},
  year={2000}
}

@book{Vulikh1967,
  title={Introduction to the theory of partially ordered spaces},
  author={Vulikh, Boris Z. and Boron, Leo F. and Zaanen, Adriaan C.},
  publisher={Noordhoff, Groningen},
  year={1967}
}

@book{Aliprantis2006,
  title={Positive operators},
  author={Aliprantis, Charalambos D and Burkinshaw, Owen},
  volume={119},
  year={2006},
  publisher={Springer Science \& Business Media}
}

@article{Cerreia2022,
  title={A Characterization of the Vector Lattice of Measurable Functions},
  author={Cerreia-Vioglio, Simone and Leonetti, Paolo and Maccheroni, Fabio},
  journal={Milan Journal of Mathematics},
  volume={90},
  number={1},
  pages={291--301},
  year={2022},
  publisher={Springer}
}

@book{Zaanen2012,
  title={Introduction to operator theory in Riesz spaces},
  author={Zaanen, Adriaan C},
  year={2012},
  publisher={Springer Science \& Business Media}
}

@book{Folland1999,
  title={Real analysis: modern techniques and their applications},
  author={Folland, Gerald B.},
  year={1999},
  publisher={John Wiley \& Sons}
}

@book{Schilling2005,
  title={Measures, integrals and martingales},
  author={Schilling, Ren{\'e} L.},
  year={2005},
  publisher={Cambridge University Press}
}

@book{Williams1991,
  title={Probability with martingales},
  author={Williams, David},
  year={1991},
  publisher={Cambridge university press}
}

@Book{Batkai2017,
  author     = {B\'{a}tkai, Andr\'{a}s and Kramar Fijav\v{z}, Marjeta and Rhandi, Abdelaziz},
  publisher  = {Birkh\"{a}user/Springer, Cham},
  title      = {Positive operator semigroups},
  year       = {2017},
  isbn       = {978-3-319-42811-6},
  note       = {From finite to infinite dimensions, With a foreword by Rainer Nagel and Ulf Schlotterbeck},
  series     = {Operator Theory: Advances and Applications},
  volume     = {257},
  doi        = {10.1007/978-3-319-42813-0},
  mrclass    = {47-02 (15-02 47B65 47D06)},
  mrreviewer = {Christoph Kriegler},
  pages      = {xvii+364},
  url        = {https://doi.org/10.1007/978-3-319-42813-0},
}

@Book{Schaefer1974,
  author     = {Schaefer, Helmut H.},
  publisher  = {Springer-Verlag, New York-Heidelberg},
  title      = {Banach lattices and positive operators},
  year       = {1974},
  series     = {Die Grundlehren der mathematischen Wissenschaften, Band 215},
  mrclass    = {46A40 (47B55 47D20)},
  mrreviewer = {A. C. Zaanen},
  pages      = {xi+376},
}

@Book{Zaanen1983,
  author     = {Zaanen, A. C.},
  publisher  = {North-Holland Publishing Co., Amsterdam},
  title      = {Riesz spaces. {II}},
  year       = {1983},
  isbn       = {0-444-86626-4},
  series     = {North-Holland Mathematical Library},
  volume     = {30},
  mrclass    = {46-02 (46A40 47-02 47B55)},
  mrnumber   = {704021},
  mrreviewer = {A. C. Thompson},
  pages      = {xi+720},
}

@Book{Luxemburg1971,
  author     = {Luxemburg, W. A. J. and Zaanen, A. C.},
  publisher  = {North-Holland Publishing Co., Amsterdam-London; American Elsevier Publishing Co., Inc., New York},
  title      = {Riesz spaces. {V}ol. {I}},
  year       = {1971},
  series     = {North-Holland Mathematical Library},
  mrclass    = {46A40 (06A65)},
  mrnumber   = {511676},
  mrreviewer = {S. J. Bernau},
  pages      = {xi+514},
}

@Book{Aliprantis1999,
  author    = {Aliprantis, Charalambos D. and Border, Kim C.},
  publisher = {Springer-Verlag, Berlin},
  title     = {{I}nfinite-{D}imensional {A}nalysis},
  year      = {1999},
  edition   = {Second},
  isbn      = {3-540-65854-8},
  note      = {A hitchhiker's guide},
  doi       = {10.1007/978-3-662-03961-8},
  mrclass   = {46-01 (00A05 28-01 46N10 47-01 54-01)},
  pages     = {xx+672},
  url       = {https://doi.org/10.1007/978-3-662-03961-8},
}

@article{Akcoglu1975,
  author  = {Akcoglu, Mustafa A.},
  title   = {A pointwise ergodic theorem in {$L_p$}-spaces},
  journal = {Canadian Journal of Mathematics},
  volume  = {27},
  number  = {5},
  pages   = {1075--1082},
  year    = {1975}
}

@article{Akcoglu1977,
  author  = {Akcoglu, Mustafa A. and Sucheston, Louis},
  title   = {Dilations of positive contractions on {$L_p$} spaces},
  journal = {Canadian Mathematical Bulletin},
  volume  = {20},
  number  = {3},
  pages   = {285--292},
  year    = {1977}
}

@book{Eisner2025,
  title={A journey through ergodic theorems},
  author={Eisner, Tanja and Farkas, B{\'a}lint},
  year={2025},
  publisher={Springer}
}

@article{Dunford1956,
  author  = {Dunford, Nelson and Schwartz, Jacob T.},
  title   = {Convergence almost everywhere of operator averages},
  journal = {J. Rational Mech. Anal.},
  volume  = {5},
  pages   = {129--178},
  year    = {1956}
}

@article{Hopf1954,
  author  = {Hopf, Eberhard},
  title   = {The general temporally discrete {M}arkoff process},
  journal = {J. Rational Mech. Anal.},
  volume  = {3},
  pages   = {13--45},
  year    = {1954}
}

@article{Garsia1965,
  author  = {Garsia, Adriano M.},
  title   = {A simple proof of {E}. {H}opf's maximal ergodic theorem},
  journal = {J. Math. Mech.},
  volume  = {14},
  pages   = {381--382},
  year    = {1965}
}

@article{Yosida1939,
  author  = {Yosida, K\={o}saku and Kakutani, Shizuo},
  title   = {Birkhoff's ergodic theorem and the maximal ergodic theorem},
  journal = {Proc. Imp. Acad.},
  volume  = {15},
  number  = {6},
  pages   = {165--168},
  year    = {1939}
}

@article{Nakano1948,
  author  = {Nakano, Hidegor{\^o}},
  title   = {Ergodic theorems in semi-ordered linear spaces},
  journal = {Ann. of Math.},
  volume  = {49},
  number  = {2},
  pages   = {538--556},
  year    = {1948}
}

@article{DeMarr1964,
  author  = {DeMarr, Ralph},
  title   = {Partially ordered linear spaces and locally convex linear topological spaces},
  journal = {Illinois J. Math.},
  volume  = {8},
  pages   = {601--606},
  year    = {1964}
}

@article{Wickstead1977,
  author  = {Wickstead, Anthony W.},
  title   = {Weak and unbounded order convergence in {B}anach lattices},
  journal = {J. Austral. Math. Soc. Ser. A},
  volume  = {24},
  number  = {3},
  pages   = {312--319},
  year    = {1977}
}

@article{GaoXanthos2014,
  author  = {Gao, Niushan and Xanthos, Foivos},
  title   = {Unbounded order convergence and application to martingales without probability},
  journal = {J. Math. Anal. Appl.},
  volume  = {415},
  number  = {2},
  pages   = {931--947},
  year    = {2014}
}

@article{Gao2014,
  author  = {Gao, Niushan},
  title   = {Unbounded order convergence in dual spaces},
  journal = {J. Math. Anal. Appl.},
  volume  = {419},
  number  = {1},
  pages   = {347--354},
  year    = {2014}
}

@article{Kandic2017,
  author  = {Kandi{\'c}, Marko and Marabeh, Mohammad A. A. and Troitsky, Vladimir G.},
  title   = {Unbounded norm topology in {B}anach lattices},
  journal = {J. Math. Anal. Appl.},
  volume  = {451},
  number  = {1},
  pages   = {259--279},
  year    = {2017}
}

@article{DengOBrienTroitsky2017,
  author  = {Deng, Yang and O'Brien, Michael and Troitsky, Vladimir G.},
  title   = {Unbounded norm convergence in {B}anach lattices},
  journal = {Positivity},
  volume  = {21},
  number  = {3},
  pages   = {963--974},
  year    = {2017}
}

@article{GaoLeungXanthos2018,
  author  = {Gao, Niushan and Leung, Denny H. and Xanthos, Foivos},
  title   = {Duality for unbounded order convergence and applications},
  journal = {Positivity},
  volume  = {22},
  number  = {3},
  pages   = {711--725},
  year    = {2018}
}

@article{Li2018,
  author  = {Li, Hui and Chen, Zili},
  title   = {Some loose ends on unbounded order convergence},
  journal = {Positivity},
  volume  = {22},
  number  = {1},
  pages   = {83--90},
  year    = {2018}
}

@article{Kandic2018,
  author  = {Kandi{\'c}, Marko and Taylor, Mitchell A.},
  title   = {Metrizability of minimal and unbounded topologies},
  journal = {J. Math. Anal. Appl.},
  volume  = {466},
  number  = {1},
  pages   = {144--159},
  year    = {2018}
}

@article{Birkhoff1931,
  author  = {Birkhoff, George D.},
  title   = {Proof of the ergodic theorem},
  journal = {Proc. Natl. Acad. Sci. USA},
  volume  = {17},
  pages   = {656--660},
  year    = {1931}
}

@article{Banach1926,
  author  = {Banach, Stefan},
  title   = {Sur la convergence presque partout de fonctionnelles lin{\'e}aires},
  journal = {Bull. Sci. Math.},
  volume  = {50},
  series  = {2},
  pages   = {27--32, 36--43},
  year    = {1926}
}

@article{VanDerWalt2018,
  title={The universal completion of ${C}({X})$ and unbounded order convergence},
  author={van der Walt, Jan Harm},
  journal={Journal of Mathematical Analysis and Applications},
  volume={460},
  number={1},
  pages={76--97},
  year={2018},
  publisher={Elsevier}
}

@article{Taylor2018,
  title={Completeness of unbounded convergences},
  author={Taylor, Mitchell A.},
  journal={Proceedings of the American Mathematical Society},
  volume={146},
  number={8},
  pages={3413--3423},
  year={2018}
}

@article{Azouzi2019,
  title = {Completeness for vector lattices},
  journal = {Journal of Mathematical Analysis and Applications},
  volume = {472},
  number = {1},
  pages = {216-230},
  year = {2019},
  issn = {0022-247X},
  author = {Youssef Azouzi}
}

@article{Taylor2019,
  title={Unbounded topologies and uo-convergence in locally solid vector lattices},
  author={Taylor, Mitchell A.},
  journal={Journal of Mathematical Analysis and Applications},
  volume={472},
  number={1},
  pages={981--1000},
  year={2019},
  publisher={Elsevier}
}

@article{Kuo2007,
  author  = {Kuo, Wen-Chi and Labuschagne, Coenraad C. A. and Watson, Bruce A.},
  title   = {Ergodic theory and the strong law of large numbers on {R}iesz spaces},
  journal = {J. Math. Anal. Appl.},
  volume  = {325},
  number  = {1},
  pages   = {422--437},
  year    = {2007},
  doi     = {10.1016/j.jmaa.2006.01.056}
}

@article{Kuo2004,
  author  = {Kuo, Wen-Chi and Labuschagne, Coenraad C. A. and Watson, Bruce A.},
  title   = {Discrete-time stochastic processes on {R}iesz spaces},
  journal = {Indag. Math. (N.S.)},
  volume  = {15},
  number  = {3},
  pages   = {435--451},
  year    = {2004}
}

@article{Azouzi2023,
  author  = {Azouzi, Youssef and Ben Amor, Mohamed A. and Homann, Jonathan and Masmoudi, Marwa and Watson Bruce A.},
  title   = {The {K}ac formula and {P}oincar\'e recurrence theorem in {R}iesz spaces},
  journal = {Proc. Amer. Math. Soc. Ser. B},
  volume  = {10},
  pages   = {182--194},
  year    = {2023}
}

@article{Azouzi2024,
  author  = {Azouzi, Youssef and Masmoudi, Marwa},
  title   = {Some characterizations of ergodicity in {R}iesz spaces},
  journal = {Quaestiones Mathematicae},
  year    = {2024}
}

@article{Azouzi2025,
  author  = {Azouzi, Youssef and Masmoudi, Marwa and Watson, Bruce A.},
  title   = {A {K}akutani--{R}okhlin decomposition for conditionally ergodic process in the measure-free setting of vector lattices},
  journal = {Ergodic Theory Dynam. Systems},
  volume  = {45},
  pages   = {3600--3618},
  year    = {2025}
}

@article{Hardy1930,
  author  = {Hardy, Godfrey H. and Littlewood, John E.},
  title   = {A maximal theorem with function-theoretic applications},
  journal = {Acta Math.},
  volume  = {54},
  number  = {1},
  pages   = {81--116},
  year    = {1930},
  doi     = {10.1007/BF02547518}
}

@book{Stein1970a,
  title={Singular integrals and differentiability properties of functions},
  author={Stein, Elias M.},
  volume={2},
  year={1970},
  publisher={Princeton university press}
}

@article{Carbery1988,
  author  = {Carbery, Anthony and Rubio de Francia, Jos\'e L. and Vega, Luis},
  title   = {Almost everywhere summability of {F}ourier integrals},
  journal = {J. London Math. Soc. (2)},
  volume  = {38},
  number  = {3},
  pages   = {513--524},
  year    = {1988}
}

\end{document}